\documentclass{article}
\usepackage[margin= 22mm,bottom=30mm, top= 21mm]{geometry}
\usepackage{amsthm}
\usepackage{amsmath}
\usepackage{amssymb}
\usepackage{setspace}
\usepackage{mathtools}
\usepackage{verbatim}
\usepackage{a4wide}
\usepackage{csquotes}
\usepackage[hidelinks]{hyperref}
\usepackage{cleveref}
\usepackage{enumitem}
\setlist[itemize]{leftmargin=*} 
\setlist[enumerate]{leftmargin=*}
\usepackage{framed}
\usepackage{floatrow}
\usepackage[T1]{fontenc}
\usepackage{bbm}
\usepackage{booktabs, tabularx, array}

\usepackage{mathdots}
\usepackage{xcolor}
\usepackage{diagbox}
\usepackage{colortbl}

\usepackage{graphicx}
\usepackage{subcaption}

\theoremstyle{plain}

\newtheorem{theorem}{Theorem}[section]
\newtheorem{claim}[theorem]{Claim}

\newtheorem{lemma}[theorem]{Lemma}
\newtheorem{corollary}[theorem]{Corollary}

\theoremstyle{definition}

\newtheorem{defn}[theorem]{Definition}
\newtheorem*{defn*}{Definition}
\newtheorem*{lem*}{Lemma}
\newtheorem*{claim*}{Claim}

\def\domagoj#1{{}{\textcolor{blue}{#1} }}

\expandafter\def\expandafter\normalsize\expandafter{%
    \normalsize
    \setlength\abovedisplayskip{4pt}
    \setlength\belowdisplayskip{4pt}
    \setlength\abovedisplayshortskip{4pt}
    \setlength\belowdisplayshortskip{4pt}
}

\usepackage[square,sort,comma,numbers]{natbib}
\def\domagoj#1{}

\let\domagoj=\domagojOpt 

\newcommand{\calH}{\mathcal{H}}

\def\eps {\varepsilon}
\DeclareMathOperator{\E}{\mathbb{E}}

\newcommand{\cC}{\mathcal{C}}

\newcommand{\cE}{\mathcal{E}}
\newcommand{\cF}{\mathcal{F}}

\newcommand{\cH}{\mathcal{H}}

\newcommand{\bF}{\mathbb{F}}

\newcommand{\bx}{\mathbf{x}}
\newcommand{\by}{\mathbf{y}}

\newcommand{\norm}[1] {
    \left \| #1 \right \|
}

\renewcommand{\Pr}{\mathbb{P}}
\renewcommand{\E}{\mathbb{E}}

\newcommand{\floor}[1]{
    \left \lfloor #1 \right \rfloor
}

\makeatletter
\newcommand{\bigplus}{%
  \DOTSB\mathop{\mathpalette\mattos@bigplus\relax}\slimits@
}
\newcommand\mattos@bigplus[2]{%
  \vcenter{\hbox{%
    \sbox\z@{$#1\sum$}%
    \resizebox{!}{0.9\dimexpr\ht\z@+\dp\z@}{\raisebox{\depth}{$\m@th#1+$}}%
  }}%
  \vphantom{\sum}%
}
\makeatother

\title{On the asymptotics of the Erd\H os--Rogers function}
\author{Domagoj Brada\v{c}\thanks{Institute of Mathematics, EPFL, Lausanne, Switzerland. Email: \textbf{domagoj.bradac@epfl.ch}} \and Oliver Janzer\thanks{Institute of Mathematics, EPFL, Lausanne, Switzerland. Email: \textbf{oliver.janzer@epfl.ch}} \and Rik Sarkar\thanks{Institute of Mathematics, EPFL, Lausanne, Switzerland. Email: \textbf{rik.sarkar@epfl.ch}}}
\date{}
\begin{document}

\maketitle

\begin{abstract}
    The Erd\H os--Rogers function $f_{\ell,s}(n)$ is the largest order of a $K_\ell$-free induced subgraph guaranteed to exist in every $K_s$-free graph on $n$ vertices. While this function is well understood for $s=\ell+1$, the case where $s$ is much larger than $\ell$ has remained wide open. A long-standing lower bound of Sudakov states that $f_{\ell,s}(n)\geq n^{\frac{\ell}{2s}+O_\ell(s^{-2})}$, while a recent result of Brada\v{c} shows that $f_{\ell,s}(n)\leq n^{\frac{\ell-1}{s-1}+o(1)}$.

    In this paper, we close this gap asymptotically by proving that $f_{\ell,s}(n)= n^{\frac{\ell}{2s}+O_\ell(s^{-2})}$. More precisely, we prove that for all $2\leq \ell<s$, we have $f_{\ell,s}(n)\leq n^{\frac{\ell}{2s-\ell}+o(1)}$. Our proof builds on Brada\v{c}'s recent tight construction for off-diagonal Ramsey numbers, which can be viewed as the $\ell=2$ case of our result.
\end{abstract}

\section{Introduction}

The Ramsey number $R(s,k)$ is the smallest $n$ such that every $n$-vertex graph contains a clique of size $s$ or an independent set of size $k$. The study of this function and its variants is one of the most prominent research areas in Combinatorics. In recent years, this topic has seen some remarkable breakthroughs, see, e.g. \cite{campos2026exponential,MattheusVerstraete2024,balister2024upper,campos2025new,ma2026exponential,hefty2025improving,off-diagonal}. We refer the reader to \cite{morris2026some} for a survey of these developments.

When $s$ is fixed and $k\rightarrow \infty$, this quantity is known as the \emph{off-diagonal Ramsey number}. Estimating this function is asymptotically equivalent to bounding the size of the largest independent set that every $K_s$-free $n$-vertex graph is guaranteed to contain. In 1962, Erd\H os and Rogers \cite{ErdosRogers1962} considered the following natural generalization of this problem: what is the largest $k$ such that every $n$-vertex $K_s$-free graph contains a $K_\ell$-free induced subgraph on $k$ vertices? This function, denoted by $f_{\ell,s}(n)$ has since become known as the \emph{Erd\H os--Rogers function}. Notice that $\ell=2$ recovers the (inverse of the) off-diagonal Ramsey number.

The study of this function has received tremendous attention over the last 60 years. The first result for $\ell>2$ was obtained by Erd\H os and Rogers \cite{ErdosRogers1962} themselves, who showed that $f_{\ell,\ell+1}(n)\leq n^{1-\eps_\ell}$ for some $\eps_\ell>0$. The first lower bound $f_{\ell,s}(n)\geq n^{1/(s-\ell+1)}$ was observed by Bollob\'as and Hind \cite{BollobasHind1991}. This, in particular, shows that $f_{\ell,\ell+1}(n)\geq n^{1/2}$.

Noting that $f_{\ell,s}(n)$ is monotone decreasing in $s$, the result of Erd\H os and Rogers implies that $f_{\ell,s}(n)\leq n^{1-\eps_\ell}$ holds for all $s>\ell$. This was considerably improved by Krivelevich \cite{Krivelevich1994,Krivelevich1995}.

\begin{theorem}[Krivelevich \cite{Krivelevich1995}] \label{thm:krivelevich}
    For every $3\leq \ell<s$, we have $f_{\ell,s}(n)\leq n^{\frac{\ell}{s+1}+o(1)}$.
\end{theorem}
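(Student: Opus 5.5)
Our plan is Krivelevich's random-graph-with-deletion argument: take $G=G(n,p)$, destroy all copies of $K_s$ by deleting a maximal family of edge-disjoint $K_s$'s, and show that the resulting $K_s$-free graph still has no $K_\ell$-free induced subgraph on $m=n^{\frac{\ell}{s+1}+o(1)}$ vertices. We choose $p=n^{-\frac{2}{s+1}}$, up to polylogarithmic factors. With this choice the expected number of copies of $K_s$ is $n^s p^{\binom s2}$. This is comparable to the number of edges $n^2p$, and in fact a bit smaller, since
\[
n^{s-2}p^{\binom s2-1} = n^{s-2-\frac{(s-2)(s+1)}{s+1}} = n^{0}.
\]
Hence a polylog adjustment makes the $K_s$'s sparse relative to the edges.

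The first step is to fix the deleted family. We let $\mathcal{K}$ be a maximal collection of edge-disjoint copies of $K_s$ in $G$ and remove all their edges, obtaining $G'$. Since $\mathcal{K}$ is maximal, $G'$ is $K_s$-free. The key point is that $G'$ is a subgraph of $G$ which agrees with $G$ except on the edges of $\mathcal{K}$. We will show that for every $m$-set $W$, the graph $G'[W]$ still contains a $K_\ell$.

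The second step is to show that the random graph has many edge-disjoint $K_\ell$'s inside every $m$-set. For fixed $W$ with $|W|=m$, the expected number of $K_\ell$'s in $G[W]$ is $m^\ell p^{\binom\ell2}$. We use the Janson inequality, or the Erd\H os--Tetali / Spencer version on disjoint families, to show that with probability at least $1-e^{-\omega(m\log n)}$, the set $W$ contains a family of at least $t\approx m^\ell p^{\binom \ell 2}/\mathrm{polylog}$ pairwise edge-disjoint $K_\ell$'s. This is good enough to survive a union bound over the $\binom nm\le e^{m\log n}$ choices of $W$. The requirement is $m^\ell p^{\binom\ell2}\gg m\log n$, i.e.\ $m^{\ell-1}\gg p^{-\binom\ell2}=n^{\frac{\ell(\ell-1)}{s+1}}$, which holds for $m=n^{\frac{\ell}{s+1}}\mathrm{polylog}$.

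The third step, which is the main obstacle, is to show that deleting $\mathcal{K}$ cannot destroy all $t$ disjoint $K_\ell$'s in any $W$. Each deleted $K_s$ touches $W$ in at most $s$ vertices and so kills at most $\binom{|K\cap W|}{2}$ edges of $W$. We therefore need to bound, for every $W$ simultaneously, the number of edges of $G[W]$ covered by $\mathcal{K}$, and compare this with $t$. The delicate part is the union bound over $W$ while $\mathcal{K}$ depends on all of $G$. We would handle it as follows. For each $2\le j\le s$, we bound w.h.p.\ the maximum over $W$ of the number of edge-disjoint $K_s$'s meeting $W$ in at least $j$ vertices. This uses the ``disjoint family'' bound $\Pr[\exists\ r \text{ disjoint copies}]\le \mu^r/r!$ together with a union bound over $W$, where $\mu\approx m^j n^{s-j}p^{\binom s2}$. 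With the choice of $p$, each such count is at most $m\cdot\mathrm{polylog}$, which we can make smaller than $t$ by a polylog margin in $m$. Thus fewer than $t$ of the disjoint $K_\ell$'s lose an edge. The computation that must be checked carefully is that these counts are indeed $\ll t$ for every $j$; it balances exactly at the exponent $\frac{\ell}{s+1}$. Some $K_\ell$ in $W$ then survives in $G'$, and this gives $f_{\ell,s}(n)\le n^{\frac{\ell}{s+1}+o(1)}$.
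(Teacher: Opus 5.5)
Your Steps 1 and 2 are fine; the paper only cites this theorem, and these steps are the standard first half of Krivelevich's deletion argument. Step 3 as designed cannot work, because the quantity you propose to bound, the number of edges of $G[W]$ covered by $\mathcal{K}$, is polynomially larger than $t$.

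Look at your own $j=2$ term: $\mu_2\approx m^2n^{s-2}p^{\binom s2}=m^2p\cdot n^{s-2}p^{\binom s2-1}$. You chose $p$ so that $n^{s-2}p^{\binom s2-1}$ is only polylogarithmically small, so $\mu_2=m^2p/\mathrm{polylog}$. But
\[
\frac{m^2p}{m^\ell p^{\binom\ell2}}=\bigl(m^{\ell-2}p^{\binom\ell2-1}\bigr)^{-1}=n^{\frac{\ell-2}{s+1}}\cdot\mathrm{polylog}.
\]
So for every $\ell\ge 3$, $\mu_2$ exceeds both $t$ and $m\cdot\mathrm{polylog}$ by a polynomial factor. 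Your claims that each such count is $m\cdot\mathrm{polylog}$ and that things balance at $\frac{\ell}{s+1}$ are false at $j=2$.

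This is not an artefact of the union bound. A typical $K_s$ shares no edge with any other $K_s$, so maximality forces it into $\mathcal{K}$. Hence even a fixed $m$-set loses about $m^2p/\mathrm{polylog}$ edges, far more than its number of edge-disjoint $K_\ell$'s. Comparing deleted edges with available structures only closes the argument when $\ell=2$ (the $R(s,k)$ case), where the structures are the edges themselves. Forcing the deleted-edge count below $t$ would need a polynomially smaller $p$, and hence a worse exponent.

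The missing idea is to show that the deletions are essentially uncorrelated with the $K_\ell$'s in $W$. Bound the number of members of your edge-disjoint family $\mathcal{F}_W$ that are \emph{hit}, not the number of deleted edges.
\begin{enumerate}
\item For each hit $F\in\mathcal{F}_W$, choose $K_F\in\mathcal{K}$ sharing an edge with $F$. Then $F\cup K_F$ is a $K_\ell$ inside $W$ glued to a $K_s$ along $k\ge 2$ vertices.
\item Since both $\mathcal{F}_W$ and $\mathcal{K}$ are edge-disjoint families, each such configuration shares an edge with only $O_{\ell,s}(1)$ others. So a constant fraction of the hit members give pairwise edge-disjoint configurations, to which the Erd\H{o}s--Tetali bound $\mu^r/r!$ applies.
\item The expected number of configurations is $\mu_\ell\sum_{k=2}^{\ell}O\bigl(n^{s-k}p^{\binom s2-\binom k2}\bigr)$. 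For $3\le k\le\ell<s$ the factor is $n^{(k-2)(k-s)/(s+1)}$ up to logarithms, which is polynomially small.
\item For $k=2$ the factor is $n^{s-2}p^{\binom s2-1}$, and this is where your polylogarithmic sparsification of $p$ is genuinely needed. With $p=n^{-2/(s+1)}(\log n)^{-a}$ it becomes $(\log n)^{-a(\binom s2-1)}$. This is much smaller than $t/\mu_\ell$, which is $(\log n)^{-O(1)}$ from Step 2, once $a$ is large.
\item Take $r\asymp t$ and union-bound over the $\binom nm\le e^{m\log n}$ sets $W$, using $t\gg m\log n$ (arranged by the polylogarithmic factor in $m$).
\end{enumerate}
This shows that, for every $W$ simultaneously, fewer than $t$ members of $\mathcal{F}_W$ are hit. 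Hence some $K_\ell$ survives in $G'[W]$, and this is the step that actually yields the exponent $\frac{\ell}{s+1}$.
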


The lower bound of Bollob\'as and Hind was drastically improved by Sudakov \cite{Sudakov2005New,Sudakov2005Large}, as one of the early applications of the dependent random choice method. The exact exponent in his lower bound is given by a recursive formula, and his bound is usually stated in the following asymptotic form.

\begin{theorem}[Sudakov \cite{Sudakov2005Large}] \label{thm:sudakov}
    For every $3\leq \ell<s$, we have $f_{\ell,s}(n)\geq n^{a_{\ell,s}+o(1)}$ for some $a_{\ell,s}=\frac{\ell}{2s}+O_\ell(s^{-2})$.
\end{theorem}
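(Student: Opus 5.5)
We would prove Theorem~\ref{thm:sudakov} by induction on $s$, building a recursion for exponents $a_{\ell,s}$ and then solving it asymptotically. The base range is trivial: for $s\le \ell$ every $K_s$-free graph is already $K_\ell$-free, so we may set $a_{\ell,s}=1$ there. For $s=\ell+1$ the Bollob\'as--Hind bound $n^{1/2}$ can also serve as a base case. For the inductive step, fix a $K_s$-free graph $G$ on $n$ vertices and thresholds $d_t=n^{x_t}$ for $t=1,\dots,\ell-1$, to be chosen later.

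\textbf{Step 1 (dichotomy).} Suppose some $t$-clique $T$ with $1\le t\le \ell-1$ has common neighbourhood $N(T)$ of size at least $d_t$. Then $G[N(T)]$ is $K_{s-t}$-free, because $T$ together with any $K_{s-t}$ inside $N(T)$ would form a $K_s$. By induction, $N(T)$ contains a $K_\ell$-free induced subgraph on at least $d_t^{\,a_{\ell,s-t}}=n^{x_t a_{\ell,s-t}}$ vertices.

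\textbf{Step 2 (sparse case).} Otherwise every $t$-clique has fewer than $d_t$ common neighbours, for every $t\le \ell-1$. We build an ordered $K_\ell$ vertex by vertex: the first vertex is arbitrary, and the $(t+1)$-st lies in the common neighbourhood of the first $t$. This gives
\[
\#K_\ell\le n\,d_1d_2\cdots d_{\ell-1}.
\]
Now choose a random subset $U$ with each vertex included independently with probability $p$, and delete one vertex from each copy of $K_\ell$ inside $U$. The expected number of surviving vertices is at least $pn-p^\ell n d_1\cdots d_{\ell-1}$. Taking $p^{\ell-1}\approx (2d_1\cdots d_{\ell-1})^{-1}$ yields a $K_\ell$-free induced subgraph of size
\[
\Omega\big(n^{1-(x_1+\cdots+x_{\ell-1})/(\ell-1)}\big).
\]

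\textbf{Step 3 (optimising the recursion).} Combining the two cases, we may take
\[
a_{\ell,s}=\max_{x}\ \min\Big\{\min_{t} x_t\, a_{\ell,s-t},\ 1-\tfrac{1}{\ell-1}\textstyle\sum_t x_t\Big\}.
\]
We balance the terms by choosing $x_t=a_{\ell,s}/a_{\ell,s-t}$. We then verify inductively the ansatz $a_{\ell,s}=\frac{\ell}{2s}+O_\ell(s^{-2})$. Under it, $x_t=\frac{s-t}{s}+O_\ell(s^{-2})$, so
\[
\sum_{t=1}^{\ell-1}x_t=(\ell-1)-\frac{\ell(\ell-1)}{2s}+O_\ell(s^{-2}).
\]
Hence $1-\frac{1}{\ell-1}\sum_t x_t=\frac{\ell}{2s}+O_\ell(s^{-2})$, which is consistent with the ansatz.

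\textbf{Main obstacle.} The difficulty is Step 3: the error terms must stay uniformly $O_\ell(s^{-2})$ through the induction rather than accumulating with $s$. We would handle this by proving an explicit two-sided inequality, for instance $\frac{\ell}{2s}-\frac{C_\ell}{s^2}\le a_{\ell,s}$ with $C_\ell$ large. This means checking that the fixed-point equation, with the $O(s^{-2})$ corrections from $a_{\ell,s-t}$ substituted in, still yields the same form. The $o(1)$ in the exponent absorbs the constant factors coming from Step 2 and from the inductive hypothesis.
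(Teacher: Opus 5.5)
The paper gives no proof of this statement. It is quoted from Sudakov, whose argument uses dependent random choice and gives a sharper exponent in small cases (for instance $5/12$ for $(\ell,s)=(3,5)$, as the paper notes).

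Your Steps 1 and 2 are a correct and more elementary alternative. The dichotomy is exhaustive and the count $\#K_\ell\le n d_1\cdots d_{\ell-1}$ is right. Balancing the two cases via $x_t=a_{\ell,s}/a_{\ell,s-t}$ gives the recursion
\[
\frac{1}{a_{\ell,s}}=1+\frac{1}{\ell-1}\sum_{t=1}^{\ell-1}\frac{1}{a_{\ell,s-t}},\qquad a_{\ell,s'}=1\ \text{ for } 2\le s'\le \ell .
\]
This recovers Bollob\'as--Hind for $s=\ell+1$, but it gives only $a_{3,5}=2/5$. That loss is in lower-order terms and is harmless, since the statement only asks for some exponent of the form $\frac{\ell}{2s}+O_\ell(s^{-2})$.

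The gap is in Step 3. From the ansatz you only get $x_t=\frac{s-t}{s}+O_\ell(s^{-1})$, not $O_\ell(s^{-2})$. The numerator and denominator of $a_{\ell,s}/a_{\ell,s-t}$ each carry a relative error of order $1/s$, and nothing makes these cancel. After summing, the error has the same order as the main term $\frac{\ell}{2s}$, so the consistency check proves nothing.

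Your proposed repair does not close either. With $D=2C_\ell/\ell$, the reciprocal of $\frac{\ell}{2s}-\frac{C_\ell}{s^2}$ is $\phi(s)=\frac{2}{\ell}(s+D)+\frac{2D^2}{\ell(s-D)}$. Since the last term decreases in $s$, you get $1+\frac{1}{\ell-1}\sum_{t}\phi(s-t)>\phi(s)$ for $s>D+\ell-1$, so this hypothesis does not reproduce itself.

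The fix is to induct on reciprocals, where the recursion is linear. Put $b_s=1/a_{\ell,s}$ and $\beta_s=b_s-\frac{2s}{\ell}$. Since $\frac{1}{\ell-1}\sum_{t=1}^{\ell-1}t=\frac{\ell}{2}$, you get $\beta_s=\frac{1}{\ell-1}\sum_{t=1}^{\ell-1}\beta_{s-t}$. So $\beta_s$ always lies between the extreme initial values $\beta_{s'}=1-\frac{2s'}{\ell}$, $2\le s'\le\ell$. Hence $|\beta_s|\le 1$, and $a_{\ell,s}=\bigl(\frac{2s}{\ell}+O(1)\bigr)^{-1}=\frac{\ell}{2s}+O_\ell(s^{-2})$.
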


We point out that Sudakov's lower bound has seen no polynomial improvement for any pair $(\ell,s)$. Similarly, in the case where $s$ is large compared to $\ell$, Krivelevich's upper bound has not been improved until very recently. Our result will concern precisely this regime, but before we introduce it, we shall first discuss the complementary regime where $s$ is close to $\ell$, which has seen significant progress over the last 15 years.

The first major advance was achieved by Dudek and R\"odl \cite{DudekRodl2011}, who showed that $f_{\ell,\ell+1}(n)=O_{\ell}(n^{2/3})$, bounding the exponent away from 1. They introduced the influential technique of placing blow-ups of $K_\ell$ randomly in an algebraic object. Their methods were developed further by Wolfovitz \cite{Wolfovitz2013}, who showed that $f_{3,4}(n)\leq n^{1/2}(\log n)^{O(1)}$, and this was generalized to $f_{\ell,\ell+1}(n)\leq n^{1/2}(\log n)^{O_{\ell}(1)}$ by Dudek, Retter and R\" odl \cite{DudekRetterRodl2014}.

Moving on to $s>\ell+1$, Dudek, Retter and R\"odl asked whether $f_{\ell,\ell+2}(n)=o(n^{1/2})$, which was answered affirmatively by Gowers and Janzer \cite{GowersJanzer2020}. Their construction still uses random blow-ups of $K_\ell$ but places them completely at random rather than being constrained by an algebraic object. This gives better bounds at the cost of a more difficult analysis.

Subsequent improvements on the Erd\H os--Rogers function were inspired by various breakthroughs on off-diagonal Ramsey numbers (themselves, of course, being a special case of the Erd\H os--Rogers function). The first of these breakthroughs was achieved by Mattheus and Verstra\"ete \cite{MattheusVerstraete2024}, who showed that $R(4,k)=k^{3-o(1)}$, or, equivalently, $f_{2,4}(n)=n^{1/3+o(1)}$. Their approach was extended in two different directions: Janzer and Sudakov \cite{JanzerSudakov2025} showed that $f_{\ell,\ell+2}(n)\leq n^{\frac{2\ell-3}{4\ell-5}+o(1)}$, improving the earlier bound of Gowers and Janzer; and Mubayi and Verstra\"ete \cite{MubayiVerstraete2025} proved that $f_{\ell,\ell+1}(n)=O_\ell(n^{1/2}\log n)$, which improved the logarithmic term in the Dudek--Retter--R\"odl result. Very recently, Morris, Sahasrabudhe and Verstra\"ete \cite{MorrisSahasrabudheVerstraete2026} proved that $f_{\ell,\ell+1}(n)=O_\ell(\sqrt{n\log n})$, which determines the right power of $\log n$. Their proof builds upon the recent breakthrough work of Hefty, Horn, King and Pfender~\cite{hefty2025improving} on the Ramsey number $R(3,k)$ and the random blow-up construction of Gowers and Janzer \cite{GowersJanzer2020}.

While our understanding of the Erd\H os--Rogers function $f_{\ell,s}$ has improved drastically in the regime where $s$ is close to $\ell$ over the last 15 years, progress on the case where $s$ is substantially larger than $\ell$ was very limited. A result of Gowers and Janzer \cite{GowersJanzer2020} improved Krivelevich's bound for $s\leq 2\ell-1$, but for $s\geq 2\ell$ and $\ell \geq 3$, Krivelevich's result has remained best. Recently, Brada\v{c} \cite{off-diagonal} determined the tight exponent for off-diagonal Ramsey numbers. Interestingly, his construction builds upon a construction of optimally pseudorandom $K_s$-free graphs due to Alon and Krivelevich~\cite{alon-krivelevich}, which was originally motivated by the problem of finding explicit constructions for the Erd\H{o}s--Rogers problem. In the equivalent language of the Erd\H os--Rogers function, Brada\v{c}'s result states that $f_{2,s}(n)=n^{\frac{1}{s-1}+o(1)}$. As pointed out in his paper, for general $\ell$ and $s$, this implies $f_{\ell,s}(n)\leq n^{\frac{\ell-1}{s-1}+o(1)}$, which is a modest improvement over Theorem \ref{thm:krivelevich} for $s\geq 2\ell$. Nevertheless, the gap between the exponent in this upper bound and in Sudakov's lower bound (Theorem~\ref{thm:sudakov}) remained large. In this paper, we close this gap asymptotically.

\begin{theorem} \label{thm:s large}
    For every $\ell\geq 2$ and $s>\ell$, there exists some $\alpha=\alpha(\ell,s)$ such that $$f_{\ell,s}(n)\leq n^{\frac{\ell}{2s-\ell}}(\log n)^{\alpha}.$$

    In particular, combined with Theorem \ref{thm:sudakov}, we have
    $$f_{\ell,s}(n)= n^{\frac{\ell}{2s}+O_{\ell}(s^{-2})}.$$
\end{theorem}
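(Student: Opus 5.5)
The plan is to generalize Brada\v{c}'s construction for $f_{2,s}$ (the Alon--Krivelevich-based graph) by combining it with random blow-ups of $K_\ell$, in the spirit of Gowers--Janzer. I will start from a pseudorandom $K_{s'}$-free base graph $G_0$ on $N$ vertices: the Alon--Krivelevich/Brada\v{c} type graph defined over a finite field, in which every vertex has degree about $N^{1-\frac{1}{2s'-3}}$-ish. Its key property is that every set larger than roughly $N^{\frac{1}{s'-1}}$ polylogarithmic contains edges, and in fact contains many edges, distributed in a controlled way. I will then replace each vertex of $G_0$ by an independent set (a blob) of size $t$. For every edge of $G_0$ I will place a random bipartite graph between the two blobs, obtained by partitioning each blob randomly into $\ell-1$ parts and joining parts according to a random pattern. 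The aim is that cliques can only use a bounded number of vertices per blob, so that $K_s$-freeness of the final graph $G$ follows from $K_{s'}$-freeness of $G_0$ together with the structure inside blobs, with $s'$ chosen as a function of $s$ and $\ell$. Alternatively, I will place a random copy of a $K_\ell$-blowup pattern on each clique of $G_0$.

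The parameters are then chosen so that $n=Nt$, and so that any vertex set $U$ of size $m=n^{\frac{\ell}{2s-\ell}}(\log n)^{\alpha}$ contains a $K_\ell$. The counting works as follows.
\begin{enumerate}
\item Project $U$ to $G_0$. Either $U$ meets few blobs heavily, in which case the random bipartite graphs inside dense edge neighbourhoods already produce a $K_\ell$; or its projection is large.
\item In the second case, I will use the pseudorandomness (spectral/count properties) of $G_0$ to guarantee many copies of $K_\ell$ in the projection.
\item Each such copy becomes a $K_\ell$ in $G$ independently with probability $p$.
\item A union bound over all choices of $U$, using Janson's inequality or a container-type argument to handle the dependencies between overlapping $K_\ell$-copies, shows that with high probability every such $U$ spans a $K_\ell$.
\end{enumerate}
The exponent $\frac{\ell}{2s-\ell}$ should emerge from balancing three quantities: the number of $\ell$-cliques guaranteed in $G_0[\pi(U)]$, the number $\binom{n}{m}\approx e^{m\log n}$ of sets $U$, and the $K_s$-freeness constraint, which limits how dense $G_0$ and the random choices may be. As a sanity check, $\ell=2$ gives $\frac{1}{s-1}$, which matches Brada\v{c}.

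The main obstacle will be the $K_s$-freeness verification combined with the requirement that $G_0$ have enough $K_\ell$'s in every large set. Brada\v{c}'s construction controls only edges, i.e.\ the case $\ell=2$. I would first try to prove a supersaturation statement for $K_\ell$ in the algebraic graph: every set of size $x$ contains $\gtrsim x^{\ell}(d/N)^{\binom{\ell}{2}}$ copies of $K_\ell$, obtained by iterating the expander mixing lemma on common neighbourhoods, which are again pseudorandom in the Alon--Krivelevich graph. I would then check that the random sparsification produces no $K_s$ by a first-moment count, deleting a vertex from each of the few surviving copies of $K_s$.
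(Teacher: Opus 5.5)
Your proposal has a genuine gap. The step that carries essentially all of the difficulty, a \emph{balanced} supersaturation theorem for $K_\ell$ at the right scale, is reduced to ``iterate the expander mixing lemma on common neighbourhoods, which are again pseudorandom'', and this does not work.

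The graph whose $K_\ell$-free sets must be controlled is Brada\v{c}'s $\Gamma$, not the algebraic graph $G$, and $\Gamma$ is not itself spectrally pseudorandom. Adjacency of $(a,b)$ and $(x,y)$ in $\Gamma$ is decided by whether $ay$ or $xb$ is an edge of $G$ (with the other a non-edge), together with the ordering. So one must project to $L(\cdot),R(\cdot)\subseteq V(G)$ and pay a factor $m^L_Am^R_B$ (Lemma~\ref{lem:fibre-mixing}), after first regularising so that $m^L_Ym^R_Y\le |Y|q^{-1+o(1)}$.

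Even then, adding clique vertices one at a time while keeping all common neighbourhoods at their expected size only works for $|Y|\ge q^{s+2\ell-4+o(1)}$. That yields $\ell/(2s)+O_\ell(s^{-2})$ but not the exponent $\ell/(2s-\ell)$. Reaching $|Y|\ge q^{s+\ell-3+o(1)}$ is the paper's main new idea:
\begin{itemize}
\item build ordered cliques in pairs, from both ends towards the middle;
\item use that lying in the common right (left) neighbourhood of $i$ chosen vertices typically cuts left (right) multiplicities by a factor $q^{i}$;
\item control only the sets $N^R(e_I)\cap N^L(e_J)$ with $I<J$ and $|I|,|J|\le (r+1)/2$, each split into a bounded part and a small faulty part (Lemma~\ref{lem:key-technical-lemma});
\item handle odd $r$ first, and reduce even $\ell$ to it through the first vertex.
\end{itemize}

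Moreover, a bare count $\gtrsim x^\ell (d/N)^{\binom{\ell}{2}}$ cannot beat a union bound over $\binom{n}{m}$ sets, whether by Janson or by containers. You need $\Delta_j(\mathcal{H})\le (|Y|q^{-\ell/2})^{\ell-j}(\log q)^{O(1)}$ for all $j$. Precisely this balance is what makes the iterated container argument give $|V(\Gamma)|^{q^{\ell/2}(\log q)^{O(1)}}$ containers of size $q^{s+\ell-3}(\log q)^{O(1)}$, and hence $m=q^{\ell/2}(\log q)^{\beta}$.

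The construction layer is also off. Since your blobs are independent sets, a clique meets each blob at most once, so blowing up gains nothing for $K_s$-freeness (you can only take $s'=s$). At the same time, any $U$ inside $\ell-1$ blobs is $K_\ell$-free. So your first case (``$U$ meets few blobs heavily'') cannot supply the $K_\ell$, and it forces $(\ell-1)t<m$. You never fix $t$, $s'$ or the random pattern, and the exponent is never actually derived. The final deletion of surviving $K_s$'s also sits oddly with your claim that $K_s$-freeness is structural.

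You also conflate the base graph with its sample. $\Gamma$ has $N\approx q^{2s-3}$ vertices, and every set of size $\gtrsim q^{s-1}\log q$ already spans many edges. That is a threshold of $N^{(s-1)/(2s-3)}$, not $N^{1/(s'-1)}$. The bound $n^{1/(s-1)}$ appears only after sampling vertices at rate about $q^{-(s-2)}$.

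The paper adds no randomness to the edges at all:
\begin{itemize}
\item it keeps $\Gamma$, which is $K_s$-free because $D^{*}$ is $T_s$-free;
\item it proves that $\Gamma$ has at most $(q^{s+\ell/2-3})^m$ $K_\ell$-free $m$-sets (Lemma~\ref{lem:counting-kl-free});
\item it keeps each vertex with probability $p=q^{-(s+\ell/2-3)}$ and deletes one vertex from each surviving $K_\ell$-free $m$-set.
\end{itemize}
This leaves about $n=q^{s-\ell/2}$ vertices with $m=n^{\ell/(2s-\ell)}$, up to polylogarithmic factors.
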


Before we discuss the proof method, let us make a few remarks about this bound. While the exponent has the right asymptotics for fixed $\ell$ and large $s$, it does not match the lower bound for any pair $(\ell,s)$ with $\ell\geq 3$.
On the other hand, it can be used to obtain major progress towards a question of Erd\H os \cite{Erdos1995}, asking whether it is true that for all $s>\ell+1$, we have $f_{\ell+1,s}(n)/f_{\ell,s}(n)\rightarrow \infty$ as $n\rightarrow \infty$. Prior to our work, an affirmative answer was known only in highly restricted parameter ranges. Our Theorem \ref{thm:s large} answers this question affirmatively for any $s$ that is sufficiently large compared to $\ell$. 

It is perhaps interesting to discuss the somewhat notorious $(\ell,s)=(3,5)$ case, which is the smallest case of the Erd\H os--Rogers problem for which the exponent is unknown. Sudakov's long-standing lower bound gives $f_{3,5}(n)\geq n^{5/12+o(1)}$, while the current best upper bound, due to Janzer and Sudakov, gives $f_{3,5}(n)\leq n^{3/7+o(1)}$. Interestingly, our Theorem \ref{thm:s large} matches this upper bound exactly, despite being a completely different construction.

In general, for $s=\ell+2$, the best upper bound remains $f_{\ell,\ell+2}(n)\leq n^{(2\ell-3)/(4\ell-5)+o(1)}$ due to Janzer and Sudakov; for $\ell+3\leq s\leq \frac{3\ell}{2}$, the result of Gowers and Janzer remains best, whereas our Theorem \ref{thm:s large} gives the best upper bound for all $s>\frac{3\ell}{2}$.

Let us briefly comment on our approach. The strategy in \cite{off-diagonal} is to bound the number of independent sets of a certain size in a suitable $K_s$-free graph, and then randomly sample its vertices so that, with positive probability, none of these independent sets survives. We follow the same general strategy, using the same $K_s$-free graph, but instead count $K_\ell$-free subsets of a suitable size and then apply a similar random sampling argument.

The counting argument in our setting is significantly harder and uses the hypergraph container method \cite{balogh2015independent,saxton2015hypergraph}. As in many applications of this method (see, for instance, \cite{morris2016number, chen2026maximum, balogh2019number,nie2024turan,jiang2022balanced,ferber2020supersaturated}), the main technical challenge is to establish a suitable balanced supersaturation result: we need to show that every sufficiently large set of vertices contains a large collection of $K_\ell$'s which are sufficiently well distributed. Proving this balanced supersaturation statement constitutes the main technical part of the paper.

The rest of the paper is organized as follows. In Section~\ref{sec: construction}, we describe the construction used to prove Theorem~\ref{thm:s large} and state some of its key properties. We also state our counting result for $K_\ell$-free subsets (Lemma~\ref{lem:counting-kl-free}) and deduce our main result from it. In Section~\ref{sec:overview}, we give an overview of the balanced supersaturation argument underlying the proof of Lemma~\ref{lem:counting-kl-free}. In Section~\ref{sec: preliminaries}, we establish the preliminary lemmas needed for the proof. In Section~\ref{sec: key lemma}, we prove our key technical lemma (Lemma~\ref{lem:key-technical-lemma}), which allows us to construct many cliques of odd order while controlling the common neighbourhoods of certain subsets of their vertices. In Section~\ref{sec: balanced-supersaturation}, we use the key technical lemma to establish the desired balanced supersaturation result for $K_\ell$'s. Finally, in Section~\ref{sec: hypergraph-containers}, we apply the hypergraph container method to complete the proof of Lemma~\ref{lem:counting-kl-free}.

\section{The construction}
\label{sec: construction}

We prove Theorem \ref{thm:s large} by analysing the construction used in \cite{off-diagonal} to show that $R(s,k)=k^{s-1+o(1)}$. The basis of this construction is an optimally pseudorandom $K_s$-free graph due to Alon and Krivelevich~\cite{alon-krivelevich}, which we now describe. Let $t \ge 2$ and let $q$ be an arbitrary prime power. Let $PG(t, q)$ denote the finite geometry of dimension $t$ over the field $\bF_q$. The points of $PG(t, q)$ may be represented by the $1$-dimensional subspaces of $\bF_q^{t+1}$, or equivalently, by equivalence classes of nonzero vectors $(x_1, \dots, x_{t+1})$ of length $t+1$ over $\bF_q$ under the equivalence relation where two vectors are equivalent if one is a multiple of the other by a nonzero element of $\bF_q$. Let $G(t, q)$ be the graph with vertex set $PG(t, q)$ where two (not necessarily distinct) vertices represented by vectors $\bx$ and $\by$ are joined by an edge if and only if $\langle \bx, \by \rangle = \sum_{i=1}^{t+1} x_i y_i = 0$, i.e. if the corresponding $1$-dimensional subspaces are orthogonal. Note that whether $\langle \bx, \by \rangle = 0$ does not depend on the choice of the representatives, so the graph is well defined. Let us point out that $G(t, q)$ contains a loop at a vertex represented by $\bx$ if and only if $\langle \bx, \bx \rangle = 0$.

A graph, which may contain loops, is said to be an $(n, d, \lambda)$-graph if it has $n$ vertices, it is $d$-regular, and all but the largest eigenvalue (which equals $d$) of its adjacency matrix are at most $\lambda$ in absolute value. The following properties of $G(t, q)$ are shown in~\cite{alon-krivelevich}.

\begin{lemma}[\cite{alon-krivelevich}] \label{lem:properties}
    Let $t \ge 2$ be fixed. The graph $G(t, q)$ is an $(n, d, \lambda)$-graph, with $n = q^t (1 + o(1))$, $d = q^{t-1}(1 + o(1))$ and $\lambda = (1+o(1)) \sqrt{d}$, where the asymptotic notation is with respect to $q$ tending to infinity.
\end{lemma}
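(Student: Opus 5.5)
The plan is to treat the three parameters separately, computing $n$ and $d$ by direct counting and bounding $\lambda$ through the square of the adjacency matrix. For $n$: the nonzero vectors of $\bF_q^{t+1}$ number $q^{t+1}-1$, and each point of $PG(t,q)$ corresponds to exactly $q-1$ of them. Hence $n=(q^{t+1}-1)/(q-1)=q^t+q^{t-1}+\dots+1=q^t(1+o(1))$. For $d$: fix a point represented by $\bx\neq 0$. Its neighbours, counting a possible loop at $\bx$ itself, are the points lying in the hyperplane $\bx^\perp=\{\by:\langle \bx,\by\rangle=0\}$. Since the standard bilinear form is nondegenerate, $\bx^\perp$ is a $t$-dimensional subspace. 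It therefore contains $(q^t-1)/(q-1)=q^{t-1}(1+o(1))$ points. This number does not depend on $\bx$, so the graph is $d$-regular, with a loop contributing $1$ to the degree.

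For the eigenvalues I would compute $A^2$, where $A$ is the adjacency matrix (loops included). For points $\bx,\by$, the entry $(A^2)_{\bx\by}$ counts the common neighbours of $\bx$ and $\by$, namely the points in $\bx^\perp\cap\by^\perp$. If $\bx=\by$ as points, this is $d$. If $\bx\neq\by$, the two vectors are linearly independent, so by nondegeneracy $\bx^\perp\cap\by^\perp$ has dimension $t-1$. It then contains $\mu:=(q^{t-1}-1)/(q-1)$ points. Hence
\[
A^2=(d-\mu)I+\mu J,
\]
where $J$ is the all-ones matrix. Note that $d-\mu=q^{t-1}$ exactly.

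Now $A$ is symmetric and $A\mathbf{1}=d\mathbf{1}$. Every other eigenvector can be taken orthogonal to $\mathbf{1}$, and $J$ annihilates it. So every other eigenvalue $\theta$ satisfies $\theta^2=d-\mu=q^{t-1}$, which gives $|\theta|=q^{(t-1)/2}=(1+o(1))\sqrt d$. This is the bound $\lambda=(1+o(1))\sqrt d$.

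The only step that needs real care is justifying the dimension counts for $\bx^\perp$ and $\bx^\perp\cap\by^\perp$. Over a finite field a vector may be self-orthogonal, so one cannot simply take orthogonal complements as if the form were positive definite. The point is that the map $\by\mapsto(\langle\bx,\by\rangle,\langle\bx',\by\rangle)$ has full rank $2$ whenever $\bx,\bx'$ are linearly independent. This holds because the Gram form $\sum_i x_iy_i$ has identity matrix and is therefore nondegenerate, so rank–nullity gives codimension $2$. Self-orthogonality only determines where the loops sit; it does not affect any of the counts above.
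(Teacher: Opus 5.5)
Your proposal is correct. The paper does not prove this lemma itself but cites Alon--Krivelevich, and your argument is essentially theirs: count the points of $PG(t,q)$ and of the hyperplane $\bx^\perp$, then use $A^2=q^{t-1}I+\mu J$ to conclude that every non-trivial eigenvalue equals $\pm q^{(t-1)/2}$. Your treatment of self-orthogonal vectors is exactly what is needed, namely nondegeneracy of the form for the dimension counts and loops contributing $1$ to the degree.
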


It was shown in~\cite{alon-krivelevich} that the subgraph of $G(t, q)$ induced on the set of non-self-orthogonal points is $K_{t+2}$-free. We now describe the construction of Brada\v{c} \cite{off-diagonal} which was used to obtain the tight exponent for the off-diagonal Ramsey problem.

\textbf{Construction.}
Let $t \ge 2$, let $q$ be a prime power and denote $G = G(t, q)$. Let $D^{*} = D^{*}(t, q)$ be the directed graph with vertex set $V(D^{*}) = \{ (a, b) \in V(G)^2 \mid ab \in E(G) \}$ and arc set $A(D^{*})$, where $((a,b),(a',b')) \in A(D^{*})$ if and only if $ab' \in E(G)$ and $a'b \not\in E(G)$. 

We say a digraph is \emph{$T_k$-free} if it does not contain a copy of the transitive tournament on $k$ vertices. Crucially, the digraph $D^{*}$ is $T_{t+1}$-free.
\begin{lemma}[{\cite[Lemma~2.11]{off-diagonal}}] \label{lem:t+1-free}
    For any $t \ge 2$ and $q$ a prime power, the digraph $D^{*}(t,q)$ is $T_{t+1}$-free.
\end{lemma}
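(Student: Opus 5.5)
The plan is to translate a copy of $T_{t+1}$ in $D^{*}$ into linear-algebraic conditions in $\bF_q^{t+1}$ and derive a contradiction by dimension counting.

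\textbf{Step 1: the orthogonality pattern.} Suppose $(a_1,b_1),\dots,(a_{t+1},b_{t+1})$ span a transitive tournament, ordered so that every arc goes from $(a_i,b_i)$ to $(a_j,b_j)$ with $i<j$. Fix representative vectors, still written $a_i,b_i$. The definitions of $V(D^{*})$ and $A(D^{*})$ then give three conditions:
\[
\langle a_i,b_i\rangle =0,\qquad \langle a_i,b_j\rangle=0 \ (i<j),\qquad \langle a_j,b_i\rangle\neq 0 \ (i<j).
\]
So the $(t+1)\times(t+1)$ matrix $M_{ij}=\langle a_i,b_j\rangle$ vanishes on and above the diagonal and is nonzero strictly below it. In particular $M_{k,k-1}\neq 0$ for every $k$. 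Loops, i.e. $a_i=b_i$ or self-orthogonal points, cause no trouble, since only these relations are used.

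\textbf{Step 2: two independent families.} First I show $a_2,\dots,a_{t+1}$ are linearly independent. Suppose $\sum_{i\ge 2} c_i a_i=0$ with some $c_i\neq 0$, and let $k$ be the largest index with $c_k\neq 0$. Pairing with $b_{k-1}$ kills every term except $c_kM_{k,k-1}$, forcing $c_k=0$, a contradiction. Symmetrically, $b_1,\dots,b_t$ are independent: take the smallest index $k\le t$ with nonzero coefficient and pair with $a_{k+1}$. Hence $W=\mathrm{span}(a_2,\dots,a_{t+1})$ has dimension $t$, that is, codimension $1$.

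\textbf{Step 3: the contradiction.} The vector $b_{t+1}$ is orthogonal to every $a_i$. For $i\le t$ this is the condition $i<j$ with $j=t+1$, and for $i=t+1$ it is the diagonal condition. If $a_1\notin W$, then $a_1,\dots,a_{t+1}$ span all of $\bF_q^{t+1}$. Since the form is nondegenerate, this forces $b_{t+1}=0$, which is impossible. So $a_1\in W$, and we may write $a_1=\sum_{i\ge 2}c_ia_i$.

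Next pair this identity with $b_t,b_{t-1},\dots,b_1$ in turn, using $\langle a_1,b_j\rangle=0$ for all $j$. Pairing with $b_{k-1}$, after the coefficients $c_{k+1},\dots,c_{t+1}$ have already been shown to vanish, leaves only $c_kM_{k,k-1}=0$, so $c_k=0$. Descending from $k=t+1$ to $k=2$ gives $a_1=0$, contradicting that $a_1$ represents a projective point.

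The only delicate step is Step 3. One must notice that $b_{t+1}$ (and dually $a_1$) is orthogonal to an entire family, and use nondegeneracy of the standard form to place $a_1$ inside $W$. The rest is triangular elimination.
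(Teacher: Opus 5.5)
Your proof is correct and complete. The relations you extract match the definitions exactly: $\langle a_i,b_i\rangle=0$ from $V(D^{*})$, and $\langle a_i,b_j\rangle=0$, $\langle a_j,b_i\rangle\neq 0$ for $i<j$ from the arcs. Both triangular eliminations are valid. The standard form on $\bF_q^{t+1}$ is nondegenerate despite having isotropic vectors, so a vector orthogonal to a spanning set is zero.

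This paper does not prove the lemma; it imports it from \cite{off-diagonal}. So there is no in-paper argument to compare against, and your proof stands as a self-contained justification.

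You can shorten it. Run your descending elimination on an arbitrary relation $\sum_{i=1}^{t+1}c_ia_i=0$, letting $k$ be the largest index with $c_k\neq 0$. If $k\ge 2$, pairing with $b_{k-1}$ gives $c_k=0$. If $k=1$, then $c_1a_1=0$ with $a_1\neq 0$ gives $c_1=0$. Hence $a_1,\dots,a_{t+1}$ form a basis outright. Since $b_{t+1}$ is orthogonal to all of them, $b_{t+1}=0$, which is the contradiction.

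This removes the case split on whether $a_1\in W$. It also removes your proof that $b_1,\dots,b_t$ are independent, which is never used. Dually, the same argument shows $b_1,\dots,b_{t+1}$ form a basis, and $a_1$ is orthogonal to all of them.
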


The $\ell=2$ case of Theorem~\ref{thm:s large} was already proved in \cite{off-diagonal}. Therefore, throughout the remainder of the paper, fix integers $\ell$ and $s$ satisfying $3 \leq \ell <s$. Let $q$ be a sufficiently large prime power, let $G = G(s-1, q)$ and let $D^{*}=D^{*}(s-1,q)$ be the digraph constructed above. Given a permutation $\pi$ of $V(D^{*})$, define $\Gamma_{\pi} = \Gamma_{\pi}(s, q)$ to be the graph with vertex set $V(D^{*})$ and edge set
\[
E(\Gamma_{\pi}) =\bigl\{\{u,v\}: (u,v)\in A(D^{*})\ \text{and}\ \pi(u)  < \pi(v)\bigr\}.
\]
That is, $\Gamma_{\pi}$ contains an edge between two vertices if and only if they are joined by an arc of $D^{*}$ directed forward according to $\pi$. Since $D^{*}$ is $T_s$-free by Lemma~\ref{lem:t+1-free}, it follows that $\Gamma_{\pi}$ is $K_s$-free. 

Lemmas 2.3 and 2.12 in \cite{off-diagonal} imply the following. 

\begin{lemma}
    \label{lem:number-of-ind-sets}
    For sufficiently large $q$, there exists a permutation $\pi$ such that the number of independent sets of size $k=q(\log q)^3$ in $\Gamma_\pi$ is at most $\left(\frac{q^{s-1} \log q}{k} \right)^{k}$.
\end{lemma}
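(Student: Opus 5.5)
This lemma is really a restatement of results already proved in \cite{off-diagonal}: Lemmas 2.3 and 2.12 there. The plan is therefore to recall those two statements and check that the parameters match our choice $G=G(s-1,q)$, $D^*=D^*(s-1,q)$ and $k=q(\log q)^3$. Lemma 2.12 of \cite{off-diagonal} should be the structural input. For $t=s-1$ it bounds, for every vertex set $U\subseteq V(D^*)$ of a suitable size, the number of arcs of $D^*$ inside $U$ from below. It should also supply a container-type description of all sets spanning few arcs of $D^*$. The proof of that estimate uses the $(n,d,\lambda)$ properties from Lemma~\ref{lem:properties} together with the expander mixing lemma applied to the pairs $(a,b)$. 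Lemma 2.3 of \cite{off-diagonal} should be the probabilistic statement: for a uniformly random permutation $\pi$, a fixed set $U$ that spans many arcs of $D^*$ is independent in $\Gamma_\pi$ with very small probability.

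Given these, I would argue as follows. Take $\pi$ uniformly at random and let $X$ be the number of independent sets of size $k$ in $\Gamma_\pi$. A set $U$ is independent in $\Gamma_\pi$ exactly when no arc of $D^*[U]$ points forward under $\pi$. In other words, $\pi$ restricted to $U$ must be compatible with a reverse topological order of $D^*[U]$.

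If $D^*[U]$ has many arcs, this probability is tiny. Concretely, one can find a large matching or a long sequence of arcs processed greedily, and each arc independently has to point backward. Alternatively, one counts linear extensions: the probability equals the number of orderings compatible with $D^*[U]$ divided by $k!$, and this is at most $\exp(-\Omega(e(D^*[U])/k\cdot\log))$-type.

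Combining the supersaturation bound from Lemma 2.12 with a union bound over $k$-sets should give $\E[X]\le \bigl(q^{s-1}\log q/k\bigr)^k$. The union bound may be organized via containers or fingerprints, so that we do not pay $\binom{n}{k}$ naively. Since $|V(D^*)|\approx q^{2(s-1)-1}$, the target bound is much smaller than $\binom{|V(D^*)|}{k}$, so some such organization is needed. By the first moment method, some $\pi$ then satisfies $X\le \E[X]$, which gives the lemma.

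The main obstacle is matching the exact quantitative form, in particular the base $q^{s-1}\log q/k$ and the exponent $k$. This requires the counting in Lemma 2.12 of \cite{off-diagonal} to be done through a fingerprint argument. In that argument an independent set is determined by a small sequence of chosen vertices, whose count is roughly $\binom{n}{k/\log q}$, together with a final choice inside a set of size about $q^{s-1}$, the neighbourhood-type container. Since the paper explicitly says these lemmas imply the statement, I would simply quote them with $t=s-1$ and verify that $k=q(\log q)^3$ lies in their admissible range for large $q$.
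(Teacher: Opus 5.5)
Your proposal matches the paper: the paper gives no separate argument and simply quotes Lemmas~2.3 and~2.12 of \cite{off-diagonal} for $t=s-1$ and $k=q(\log q)^3$, which is exactly what you end up doing. The reconstruction you insert in between (matchings, linear extensions, fingerprints) is only a guess at how those lemmas work and is not needed, and a matching-based bound of the form $2^{-O(k)}$ would be far too weak on its own; since your proof rests on the citation, this does not affect correctness.
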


We fix a permutation $\pi$ satisfying the conclusion of Lemma \ref{lem:number-of-ind-sets}. For the remainder of the paper we write $\Gamma=\Gamma_{\pi}$ and $u < v$ whenever $\pi(u) < \pi(v)$. We now state our main lemma, from which Theorem \ref{thm:s large} can be easily deduced.

\begin{lemma} \label{lem:counting-kl-free}
    There exists $\beta=\beta(\ell,s)$ such that for all sufficiently large $q$, the number of $K_\ell$-free subsets in $\Gamma$ of size $m = q^{\ell/2}(\log q)^{\beta}$ is at most $\left(q^{s+\frac{\ell}{2}-3}\right)^{m}$. 
\end{lemma}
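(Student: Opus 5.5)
We will prove the lemma with the hypergraph container method, applied to the $\ell$-uniform hypergraph $\mathcal{H}$ on $V(\Gamma)$ whose edges are the copies of $K_\ell$ in $\Gamma$. The vertex count is $N=|V(\Gamma)|=\Theta(q^{2s-3})$, since the vertices are the $\Theta(q^{s-1})\cdot\Theta(q^{s-2})$ orthogonal pairs of $G=G(s-1,q)$. Every $K_\ell$-free set is an independent set of $\mathcal{H}$. We will iterate the container lemma of Balogh--Morris--Samotij and Saxton--Thomason. This produces a family $\mathcal{C}$ of containers with the following properties: every $K_\ell$-free set lies in some $C\in\mathcal{C}$, each container has size at most roughly $q^{\ell/2}$ up to polylogarithmic factors, and $|\mathcal{C}|$ is small. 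The number of $K_\ell$-free $m$-sets is then at most $\sum_{C}\binom{|C|}{m}$. Since $m=q^{\ell/2}(\log q)^\beta$ exceeds the final container size, we cannot simply take $C$ to have fewer than $m$ vertices. Instead we stop the iteration once containers reach size $M=q^{\ell/2}(\log q)^{\beta'}$ with $\beta'<\beta$ and bound $\binom{M}{m}$ trivially. Alternatively, we stop earlier at a size $M$ where $\binom{M}{m}\le (eM/m)^m$ is negligible compared with $(q^{s+\ell/2-3})^m$. Note that $q^{s+\ell/2-3}\cdot m = q^{s+\ell-3}\cdot\mathrm{polylog}$, so the target bound is generous: it is essentially $(N/m)^m$ up to a $q^{-\ell/2}$-type factor, i.e.\ $\binom{N}{m}q^{-(\ell/2)m}$ roughly. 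We must therefore gain a factor of about $q^{\ell/2}$ per vertex over the trivial count. The total container count must then be at most $\exp(O(m\log q))$ with the right constant, which means the sum of the fingerprint sizes over all iterations must be at most about $m$.

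The input the container lemma needs is \emph{balanced supersaturation}. For every set $U\subseteq V(\Gamma)$ with $|U|\ge M$, we need a collection $\mathcal{K}_U$ of copies of $K_\ell$ inside $U$ satisfying two conditions. First, it is large: $|\mathcal{K}_U|\gtrsim |U|\cdot\tau^{-(\ell-1)}$ for a suitable $\tau$. Second, it has bounded codegrees: every $j$-set of vertices lies in at most $\Delta_j\lesssim |\mathcal{K}_U|/(|U|\tau^{j-1})$ of them, for $2\le j\le\ell$. The container lemma then produces fingerprints of size about $\tau|U|$ and shrinks $U$ by a constant factor each round, so after $O(\log q)$ rounds the total fingerprint cost is $\sum \tau|U|\log q$. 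We will choose $\tau$ so that this sum is $O(m\log q)$ at the final scale and decays geometrically upward. Concretely we aim for $\tau\approx M/|U|$ up to logarithms, so that each round costs roughly $M$ fingerprint vertices.

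To obtain supersaturation we exploit the pseudorandomness of $G$. By the expander mixing lemma (Lemma~\ref{lem:properties}), any set of $x$ points of $G$ spans about $x^2/q$ edges once $x\gg q^{(s-1)/2}\cdot$, and common neighbourhoods of $j$ points in general position are subspaces of codimension $j$, of size about $q^{s-1-j}$. We project $U$ to its first and second coordinates and use the orientation $\pi$ only at the end. The edges of $\Gamma$ among pairs in $U$ are governed by the orthogonality conditions $ab'\in E(G)$, $a'b\notin E(G)$. The non-edge condition has density $1-O(1/q)$, so it is essentially free, and each edge of $\Gamma$ essentially costs one orthogonality relation. An $\ell$-clique $(a_1,b_1),\dots,(a_\ell,b_\ell)$ ordered by $\pi$ then requires $a_ib_j\perp$ for all $i<j$, roughly $\binom{\ell}{2}$ relations. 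Because $\pi$ was chosen (via Lemma~\ref{lem:number-of-ind-sets}) so that every set of size $k=q(\log q)^3$ spans an edge, we plan to build cliques greedily. We take a vertex $v$, pass to the set of its forward neighbours inside $U$, and repeat, using the independent-set bound to guarantee an edge among any $k$ vertices. This is the step where we need the paper's key technical lemma (Lemma~\ref{lem:key-technical-lemma}). We will construct many cliques of odd order in pairs while controlling common neighbourhoods of subsets of their vertices. Pairing the $\ell$ vertices as $\lfloor\ell/2\rfloor$ edges plus possibly one vertex explains the exponent $\ell/2$: each pair of new vertices reduces the relevant common neighbourhood by a factor about $q$ in the geometry.

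The main obstacle will be the codegree control, i.e.\ ensuring $\Delta_j$ is small. Some $j$-sets, for example points whose first coordinates span a low-dimensional subspace, have atypically large common neighbourhoods and could lie in far too many cliques. We would first try a deletion and regularisation scheme. We discard vertices of $U$ lying in subspaces with abnormally many points of $U$, using the dyadic pigeonhole principle to find a well-distributed subset, and we only count cliques whose vertex projections are in general position. We then bound the codegrees by the sizes of common-neighbourhood subspaces. If the pigeonholing loses only polylogarithmic factors, these are absorbed into $\beta$, completing the container calculation.
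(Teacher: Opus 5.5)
Your overall architecture is the paper's: containers fed by a balanced supersaturation statement for $K_\ell$'s, then a sum of $\binom{|C|}{m}$ over containers. The gap is quantitative and fatal as stated. You conflate the fingerprint size per round, which is $b=q^{\ell/2}$, with the size at which the container iteration stops.

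Containers cannot shrink to size $q^{\ell/2}(\log q)^{O(1)}$, and no balanced supersaturation can hold for all $|U|\ge q^{\ell/2}(\log q)^{\beta'}$. Since $\Gamma$ is $K_s$-free and locally dense, passing to neighbourhoods $s-\ell$ times yields $K_\ell$-free sets of size $q^{s+\ell-3-o(1)}$, and each of these must lie in some container. (With $M<m$, your bound $\binom{M}{m}$ would even be $0$.) For the same reason, the required gain over $(e|V(\Gamma)|/m)^m$ is a factor $q^{s-\ell}$ per vertex, not $q^{\ell/2}$. Your own observation that $q^{s+\ell/2-3}m=q^{s+\ell-3}(\log q)^{\beta}$ already says the iteration must stop at $w=q^{s+\ell-3}(\log q)^{O(1)}$.

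The correct bookkeeping is as follows.
For every $X$ with $|X|\ge w$, pass to $Y\subseteq X$ with $|Y|\ge|X|/(\log q)^3$ and $m_Y^Lm_Y^R\le|Y|(\log q)^7/q$ (Corollary~\ref{cor:regularised-subset}). Then find an $\ell$-graph of cliques in $Y$ with $e(\mathcal{H})\ge|Y|^\ell q^{-\binom{\ell}{2}}(\log q)^{-O(1)}$ and $\Delta_j(\mathcal{H})\le(|Y|q^{-\ell/2})^{\ell-j}(\log q)^{O(1)}$. This is the container hypothesis with $\tau=q^{\ell/2}/|Y|$, which reads $\Delta_j\lesssim\tau^{j-1}e(\mathcal{H})/v(\mathcal{H})$; you put $\tau^{j-1}$ in the denominator. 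Each round costs $q^{\ell/2}$ fingerprint vertices but removes only a $(\log q)^{-O(1)}$ fraction of $X$, not a constant fraction. After polylogarithmically many rounds there are at most $|V(\Gamma)|^{q^{\ell/2}(\log q)^{O(1)}}\le 2^m$ containers, each of size at most $w$. Hence $\binom{|C|}{m}\le(e|C|/m)^m\le(q^{s+\ell/2-3}/2)^m$ once $\beta$ is large.

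Your supersaturation sketch also contains concrete errors. Lemma~\ref{lem:number-of-ind-sets} only bounds the \emph{number} of independent $k$-sets; it does not make every $k$-set span an edge. In fact $\Gamma$ has independent sets of size about $q^{s-2}\gg k$: the fibre $\{(a,b):b\in N_G(a)\}$ spans no arc, since $((a,b),(a,b'))\in A(D^{*})$ would require $ab\notin E(G)$. What is true is local density for sets of size at least $4q^{s-1}\log q$ (Lemma~\ref{lem: local-density}), which the paper uses to keep candidate sets large.

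More importantly, the codegree bound at threshold $q^{s+\ell-3}$ is the main content of the paper. Discarding points in low-dimensional subspaces or restricting to general position does not supply it. Controlling every common neighbourhood at its expected size only gives balanced supersaturation for $|Y|\ge q^{s+2\ell-4+o(1)}$. Reaching $q^{s+\ell-3}$ requires several further ingredients:
\begin{itemize}
\item controlling left and right multiplicities via the fibre version of expander mixing (Lemma~\ref{lem:fibre-mixing});
\item building ordered cliques in pairs from both ends inwards, tracking only the relevant pairs $(I,J)$ with a bounded/faulty split;
\item accepting a $q^{1/2}$ loss for neighbourhoods involving the middle position;
\item bounding $\Delta_1$ by a maximality argument;
\item reducing even $\ell$ to odd $\ell-1$ inside a right neighbourhood.
\end{itemize}
If you take Lemma~\ref{lem:final-hypergraph} as given, what remains is exactly the container iteration above, with threshold $q^{s+\ell-3}$ rather than $q^{\ell/2}$.
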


Before proving Theorem \ref{thm:s large}, let us briefly discuss the optimality of the bound in Lemma \ref{lem:counting-kl-free}. We first point out that $\Gamma$ is locally dense: every subset of $V(\Gamma)$ of size at least $q^{s-1+o(1)}$ has density at least $q^{-1-o(1)}$ (see Lemma~\ref{lem: local-density}). Moreover, since $\Gamma$ is $K_s$-free, the neighbourhood of any vertex is $K_{s-1}$-free. Using the local density of $\Gamma$, and repeatedly passing to neighbourhoods $s-\ell$ times, we obtain a $K_\ell$-free subset of $\Gamma$ of size at least $q^{-(s-\ell)-o(1)}|V(\Gamma)|=q^{s+\ell-3-o(1)}$, as $|V(\Gamma)|\ge q^{2s-3}/2$. Since $m=q^{\ell/2+o(1)}$, this gives at least $\binom{q^{s+\ell-3-o(1)}}{m} \ge \left( q^{s+\frac{\ell}{2}-3-o(1)}\right)^{m}$ $K_\ell$-free subsets of size $m$. Thus, up to the $q^{o(m)}$ factor, the bound in Lemma \ref{lem:counting-kl-free} is best possible. The choice of $m$ is also optimal; for much smaller values of $m$, one can show that there are close to $\binom{|V(\Gamma)|}{m}$ $K_\ell$-free subsets in $\Gamma$ of size $m$. Indeed, it is not hard to show that the number of copies of $K_\ell$ in $\Gamma$ is at most $|V(\Gamma)|^\ell q^{-\binom{\ell}{2}+o(1)}$. Hence, if $m = q^{\ell/2-\eps}$, for a constant $\eps > 0$, the expected number of copies of $K_\ell$ in a random subset of $V(\Gamma)$ of size $2m$ is at most $(2m)^{\ell}q^{-\binom{\ell}{2}+o(1)} \le m/2$. Therefore, at least half the subsets of $\Gamma$ of size $2m$ contain at most $m$ copies of $K_\ell$. Deleting one vertex from each such copy, it follows that at least half the subsets of size $2m$ contain a $K_{\ell}$-free subset of size $m$. Since each $K_{\ell}$-free subset is counted at most $\binom{|V(\Gamma)|-m}{m}$ times, we get at least 
\[
\frac{\binom{|V(\Gamma)|}{2m}}{2 \binom{|V(\Gamma)|-m}{m}}\ge \frac{1}{4^{m}}\binom{|V(\Gamma)|}{m}
\]
$K_\ell$-free subsets of $\Gamma$ of size $m$. We now deduce Theorem~\ref{thm:s large}.
\begin{proof}[Proof of Theorem \ref{thm:s large} given Lemma \ref{lem:counting-kl-free}]
Let $\beta$ be the constant given by Lemma \ref{lem:counting-kl-free} and set $m=q^{\ell/2}(\log q)^{\beta}$. Applying Lemma \ref{lem:counting-kl-free}, for all sufficiently large prime powers $q$, we obtain a $K_s$-free graph $\Gamma$ with $|V(\Gamma)| \ge q^{2s-3}/2$ such that the number of $K_\ell$-free subsets of $V(\Gamma)$ of size $m$ is at most $\left(q^{s+\frac{\ell}{2}-3}\right)^{m}$. Let $p=\left(q^{s+\frac{\ell}{2}-3} \right)^{-1}$. Clearly, $p \le 1$. Let $\Gamma^{*}$ be the subgraph of $\Gamma$ obtained by independently retaining each vertex of $\Gamma$ with probability $p$ and removing an arbitrary vertex from each $K_{\ell}$-free subset of size $m$. Since $\Gamma$ is $K_s$-free, so is $\Gamma^{*}$, and moreover, $\Gamma^{*}$ does not contain any $K_\ell$-free subset of size $m$. Finally,
\[
\mathbb{E}[|V(\Gamma^{*})|] \ge p|V(\Gamma)|-1 \ge \frac{q^{s-\frac{\ell}{2}}}{4}.
\]
Hence, for all sufficiently large prime powers $q$, there exists a $K_s$-free graph on at least $q^{s-\frac{\ell}{2}}/4$ vertices containing no $K_\ell$-free subset of size $m=q^{\ell/2}(\log q)^{\beta}$.
Using Bertrand's postulate, this implies Theorem \ref{thm:s large}.
    
\end{proof}

The rest of the paper will be devoted to proving Lemma \ref{lem:counting-kl-free}.

\medskip

\textbf{Notation.}
We use standard graph theoretic notation. For a graph $H$, and vertex sets $A, B \subseteq V(H)$, we denote $e_{H}(A,B)=|\{(a,b) \in A \times B \mid ab \in E(H)\}|$. For $U \subseteq V(H)$, write $e_{H}(U)=|E(H[U])|$.

Recall that the vertices of $\Gamma$ are ordered pairs $(a,b)\in V(G)^2$. For $S\subseteq V(\Gamma)$, let
\[
L(S):=\{a\in V(G):(a,b)\in S\text{ for some }b\in V(G)\},
\qquad
R(S):=\{b\in V(G):(a,b)\in S\text{ for some }a\in V(G)\}
\]
be the left and right projections of $S$, respectively.
For $a,b\in V(G)$, define 
\[
m_S^L(a):=|\{b'\in V(G):(a,b')\in S\}|,
\qquad
m_S^R(b):=|\{a'\in V(G):(a',b)\in S\}|.
\]
We refer to $m_S^L(a)$ and $m_S^R(b)$ as the left and right multiplicities, respectively.
Write $m_S^L:=\max_{a\in V(G)}m_S^L(a)$ and $m_S^R:=\max_{b\in V(G)}m_S^R(b)$.

For $v\in V(\Gamma)$, let $N^L(v):=\{u\in V(\Gamma):uv \in E(\Gamma), u<v\}$ and $N^R(v):=\{u\in V(\Gamma):uv \in E(\Gamma), v<u\}$ denote the left and right neighbourhoods of $v$, respectively.
For $A\subseteq V(\Gamma)$, write $N^L(A):=\bigcap_{v\in A}N^L(v)$ and $N^R(A):=\bigcap_{v\in A}N^R(v)$, with the convention that $N^L(\emptyset)=N^R(\emptyset)=V(\Gamma)$. 
For $A, B \subseteq V(\Gamma),$ we write $\overrightarrow{E}_\Gamma(A, B)$ for the set of ordered pairs $(a, b) \in A \times B$ such that $a < b$ and $ab \in E(\Gamma)$. We further denote $\overrightarrow{E}_\Gamma(A) = \overrightarrow{E}_\Gamma(A,A).$

Finally, for a hypergraph $\calH$ and $j\ge1$, we write $\Delta_j(\calH)$ for the maximum number of edges of $\calH$ containing any fixed set of $j$ vertices.

We use $\log x$ to denote the natural logarithm of $x$. We systematically ignore floor and ceiling signs whenever they are not crucial to the argument.

\section{Proof overview} \label{sec:overview}

We now give a detailed overview of the proof of Lemma~\ref{lem:counting-kl-free}. As discussed in the introduction, the lemma will be proved using the method of hypergraph containers, for which the main ingredient is a balanced supersaturation result for $K_\ell$'s in $\Gamma$. In other words, we seek a large collection of $K_\ell$'s which is sufficiently well distributed.

More precisely, to prove Lemma \ref{lem:counting-kl-free}, for every subset $Y \subseteq V(\Gamma)$ of size at least $q^{s+\ell-3+o(1)}$, we aim to construct an $\ell$-uniform hypergraph $\calH$ whose edges are $\ell$-cliques in $\Gamma[Y]$ and such that 
\[
e(\calH)\ge |Y|^\ell q^{-\binom{\ell}{2}-o(1)}
\qquad\text{and}\qquad
\Delta_j(\calH)\le
\left(|Y|q^{-\ell/2} \right)^{\ell-j}q^{o(1)}
\quad\text{for every }j\in[\ell].
\]
 
Recall that the vertices of $\Gamma$ are ordered pairs of vertices which form an edge in the underlying pseudorandom graph $G$. Recall also that for a set $S \subseteq V(\Gamma)$, $L(S)$ and $R(S)$ denote its left and right projections, respectively. We say informally that a set $S$ is \emph{nearly regular} if every element of $L(S)$ appears as the left coordinate of roughly the same number of vertices in $S$, and similarly, every element of $R(S)$ appears as the right coordinate of roughly the same number of vertices in $S$. 

If two vertices $(x,y)$ and $(u,v)$ are adjacent in $\Gamma$, then either $xv\in E(G)$ or $uy\in E(G)$. Thus, estimates for the number of edges between two nearly regular subsets of $V(\Gamma)$ can be reduced to estimates for the number of edges in $G$ between the left projection of one set and the right projection of the other, and vice versa. The pseudorandomness of $G$ gives useful estimates of this kind provided that the product of the sizes of the relevant projections is sufficiently large; more precisely, we require this product to be at least $q^s$.

We first prove that we may assume that $Y$ is nearly regular, with every element of $L(Y)$ appearing as the left coordinate of at most $\delta_L$ vertices of $Y$, and every element of $R(Y)$ appearing as the right coordinate of at most $\delta_R$ vertices of $Y$, where
\begin{equation} \label{eq: delta-product}
    \delta_L\delta_R\le q^{-1+o(1)}|Y|.
\end{equation}
This follows from a simple regularisation argument together with the pseudorandomness of $G$. In particular,
\[
    |L(Y)||R(Y)|
    \ge \frac{|Y|}{\delta_L}\cdot \frac{|Y|}{\delta_R}
    \ge q^{1-o(1)}|Y|.
\]
Thus, the left and right projections of $Y$ have large product, which will allow us to exploit the pseudorandomness of $G$ when constructing cliques in $\Gamma[Y]$.

Let us first sketch a simpler argument proving balanced supersaturation under the stronger assumption that $|Y|\ge q^{s+2\ell-4+o(1)}$. Already this weaker form of balanced supersaturation would imply $f_{\ell,s}(n) \le n^{\frac{\ell}{2s}+O_\ell(s^{-2})}$ when $\ell$ is fixed and $s \to \infty$. We construct an $\ell$-clique $v_1,\ldots,v_\ell$ one vertex at a time. There is a slight subtlety which we have suppressed thus far. In particular, an edge between the relevant projections in $G$ need not necessarily give rise to an edge in $\Gamma$, since the definition of $\Gamma$ also imposes a non-adjacency condition on the other pair of coordinates, and also a condition on the ordering. Thus, estimates for the number of edges in $G$ between the relevant projections only give upper bounds on the sizes of the corresponding common neighbourhoods. These estimates will be used to upper bound the codegrees, while in the actual proof we use the local density of $\Gamma$ separately to ensure that the set from which we choose the next vertex remains sufficiently large, thus allowing us to build many cliques. For the purposes of this overview, we ignore this distinction and assume that every set $K$ of at most $\ell-1$ vertices chosen so far satisfies
\begin{equation} \label{eq: common-neighbourhood-bound}
    |N_{\Gamma}(K)\cap Y|=|Y|q^{-|K|+o(1)}.
\end{equation}
Since the edge density of $\Gamma$ is roughly $1/q$, this is the expected size of the common neighbourhood of $K$ in $Y$. Suppose that $v_1,\dots,v_i$ have already been chosen, and fix $K\subseteq\{v_1,\dots,v_i\}$ with $|K|\le \ell-2$. Put
\[
    U_i=N_\Gamma(v_1,\dots,v_i)\cap Y
    \qquad\text{and}\qquad
    W=N_\Gamma(K)\cap Y.
\]
The next vertex $v_{i+1}$ must be chosen from $U_i$. To maintain the desired upper bound for the common neighbourhood of $K \cup \{v_{i+1}\}$, we want a typical vertex of $U_i$ to have at most $q^{-1+o(1)}|W|$ neighbours in $W$. We therefore want
$e_\Gamma(U_i,W)\le q^{-1+o(1)}|U_i||W|$. As discussed before, we obtain such an estimate by counting edges in $G$ between $L(W)$ and $R(U_i)$, and vice versa. 
For the pseudorandomness of $G$ to provide a sufficiently good bound on the number of edges between $L(W)$ and $R(U_i)$, we need
$|L(W)||R(U_i)|\ge q^s$. This inequality indeed holds, since equations~(\ref{eq: delta-product}) and (\ref{eq: common-neighbourhood-bound}) together imply
\begin{equation}
\label{equation: naive-projection-bound}
    |L(W)||R(U_i)|
    \ge \frac{|W|}{\delta_L}\cdot\frac{|U_i|}{\delta_R}
    \ge q^{-|K|-i+1-o(1)}|Y|
    \ge q^s,
\end{equation}
where we used $|W|=|Y|q^{-|K|+o(1)}$ and $|U_i|=|Y|q^{-i+o(1)}$, and the final inequality follows from $|K|\le\ell-2$, $i\le\ell-1$ and
$|Y|\ge q^{s+2\ell-4+o(1)}$. 

The pseudorandomness of $G$ then gives
$e_\Gamma(U_i,W)\le q^{-1+o(1)}|U_i||W|$. By averaging, almost all vertices in $U_i$ preserve the desired bounds on the common neighbourhoods. In this way, using the local density of $\Gamma$ to maintain a large candidate set for subsequent vertices of the clique, we build
$|Y|^\ell q^{-\binom{\ell}{2}-o(1)}$ $\ell$-cliques, and the upper bound on the common neighbourhoods implies that every fixed set of $j$ vertices is contained in at most
\begin{equation}
\label{equation: naive-codegree}
    \frac{|Y|^{\ell-j}}{q^{\binom{\ell}{2}-\binom{j}{2}}}\,q^{o(1)}
    =
    \left(\frac{|Y|}{q^{(\ell+j-1)/2}}\right)^{\ell-j}q^{o(1)}
    \le
    \left(\frac{|Y|}{q^{\ell/2}}\right)^{\ell-j}q^{o(1)}
\end{equation}
of the selected cliques, giving the desired balanced supersaturation result for $|Y|\geq q^{s+2\ell-4+o(1)}$.

We next explain how exploiting the ordering and additional structure of the common neighbourhoods improves this threshold from $q^{s+2\ell-4+o(1)}$ to the nearly-optimal threshold $q^{s+\ell-2+o(1)}$. Recalling that the vertices of $\Gamma$ are ordered, for a vertex $v\in V(\Gamma)$, write $N^L(v)$ and $N^R(v)$ for the set of neighbours of $v$ lying to its left and right, respectively. More generally, for a set $S\subseteq V(\Gamma)$, let
\[
    N^L(S):=\bigcap_{v\in S}N^L(v)
    \qquad\text{and}\qquad
    N^R(S):=\bigcap_{v\in S}N^R(v)
\]
denote their common left and right neighbourhoods. The simpler argument above does not make use of the ordering of the vertices of $\Gamma$. In the actual proof, this ordering plays a crucial role. We construct an ordered $\ell$-clique $\{v_1<\cdots<v_\ell\}$ from the two ends towards the middle, choosing the vertices in the order
\[
    v_1,v_\ell,v_2,v_{\ell-1},\ldots.
\]
Suppose we have chosen $i$ vertices $v_1,\dots,v_{i/2},
v_{\ell-i/2+1},\dots,v_\ell$, where $i$ is even for simplicity.
We choose the next vertex from
\[
U_i= N^R(\{v_1,\dots,v_{i/2}\})
\cap
N^L(\{v_{\ell-i/2+1},\dots,v_\ell\})
\cap Y.
\]
In equation~(\ref{equation: naive-projection-bound}), we used the lower bound $|R(U_i)|\ge |U_i|/\delta_R$, which follows from the fact that every element of $R(U_i)\subseteq R(Y)$ appears as the right coordinate of at most $\delta_R$ vertices of $Y$, and hence of $U_i$. However, this bound does not exploit the fact that $U_i$ is a common neighbourhood of several previously chosen vertices. The definition of $\Gamma$ suggests that heuristically, after imposing adjacency to the $ i/2 $ vertices already chosen from one side, a typical right coordinate should appear in at most $\delta_R q^{-i/2}$
vertices in $U_i$. Consequently, we expect the stronger lower bound
\begin{equation}
\label{equation: stronger-R(Ui)-bound}
    |R(U_i)|\ge \frac{|U_i|}{\delta_R q^{- i/2}}=\frac{|Y|q^{-i/2-o(1)}}{\delta_R}.
\end{equation}
Another key observation is that the codegree bounds obtained in equation~(\ref{equation: naive-codegree}) are stronger than what we actually need. This suggests that it is unnecessary to control the common neighbourhood of every subset of a clique. Instead, we only keep track of a carefully chosen collection of common neighbourhoods which is sufficient to obtain the desired codegree bounds. More precisely, a careful counting argument shows that it suffices to ensure that 
\[
    |N^R(I)\cap N^L(J)\cap Y|
    \le |Y|q^{-|I|-|J|+o(1)},
\]

where $I,J$ are subsets of the clique such that every vertex of $I$ precedes every vertex of $J$ in the ordering of $V(\Gamma)$, and $|I|,|J|\le \lceil \ell/2\rceil$. As in the simpler argument above, we assume for the sake of exposition that these neighbourhoods have roughly their expected size, and hence replace the inequality above by equality.

Suppose that $I,J$ are subsets of the vertices chosen so far, with every vertex of $I$ preceding every vertex of $J$, and put
\[
    W=N^R(I)\cap N^L(J)\cap Y.
\]
Suppose that we wish to add the next vertex $v\in U_i$ to $J$, where $|J|\le \lceil\ell/2\rceil-1$. In order to preserve the required common neighbourhood bound, we need to control the number of edges from $U_i$ to $W$ in the appropriate direction. The pseudorandomness of $G$ gives the required estimate provided the product $|L(W)||R(U_i)|$ is sufficiently large.

As before, the adjacency conditions imposed by the vertices of $I$ suggest that a typical left coordinate appears in at most $\delta_Lq^{-|I|}$ vertices of $W$. Consequently,
\begin{equation}
\label{equation: stronger-L(W)-bound}
    |L(W)|
    \ge
    \frac{|W|}{\delta_Lq^{-|I|}}
    =
    \frac{|Y|q^{-|J|-o(1)}}{\delta_L}.
\end{equation}
Together with equation~(\ref{equation: stronger-R(Ui)-bound}), this gives
\[
    |L(W)||R(U_i)|
    \ge
    \frac{|Y|^2q^{-i/2-|J|-o(1)}}{\delta_L\delta_R}
    \ge
    |Y|q^{1-i/2-|J|-o(1)}
    \ge
    |Y|q^{-(\ell-2)-o(1)},
\]
where we used equation~(\ref{eq: delta-product}), combined with the fact that $i\le \ell-1$ and $|J|\le \lceil\ell/2\rceil-1$. Hence, provided $|Y|\ge q^{s+\ell-2+o(1)}$, the product of the relevant projections is at least $q^s$, and the pseudorandomness of $G$ supplies the required estimate.

There is one additional issue hidden by this heuristic argument. In each common neighbourhood, there may be an exceptional set of vertices whose left or right coordinates appear substantially more often than expected. We keep track of these exceptional sets and use the pseudorandomness of $G$ once more to show that they are sufficiently small. This allows the preceding argument to be carried out for all the common neighbourhoods that we need to control. 

The preceding discussion captures the main ideas of the proof. Reaching the optimal threshold $|Y|\ge q^{s+\ell-3+o(1)}$ requires several further technical refinements. First, we can no longer control all the common neighbourhoods described above at their expected size: some of them may be larger by a factor of $q^{1/2}$. These weaker bounds are still sufficient to control all codegrees except the degrees of individual vertices. Second, the exceptional sets mentioned above need to be analysed more carefully. To reach the optimal threshold, we need to choose the vertices of the clique in pairs, proceeding from the two ends towards the middle, which allows us to obtain sufficiently strong bounds on these exceptional sets. Finally, there are some additional complications arising from the parity of $\ell$, and the argument turns out to be cleaner when $\ell$ is odd.

For these reasons, we first prove a slightly more robust balanced supersaturation statement for cliques of odd order. The additional information retained in this statement allows us both to control the degrees of individual vertices and to deduce the even case from the odd one. In Section~\ref{sec: key lemma}, we make the preceding heuristic argument rigorous by proving the key technical lemma, which provides the required control on the common neighbourhoods. In Section~\ref{sec: balanced-supersaturation}, we use the key lemma to prove a robust balanced supersaturation statement for cliques of odd order (Lemma~\ref{lem: odd-codegree-from-main-lemma}), and then deduce the even case from it (Lemma~\ref{lem:final-hypergraph}).

\section{Preliminary lemmas} \label{sec: preliminaries}
Recall that $G$ is an $(n, d, \lambda)$-graph with $n = (1+o(1))q^{s-1}, d = (1+o(1)) q^{s-2}$ and $\lambda = (1+o(1)) d^{1/2}.$ Throughout the rest of the paper, we shall assume that the parameters $\ell$ and $s$ are fixed and $q$ is sufficiently large. Specializing the weighted version of the expander mixing lemma (see e.g.~\cite[Lemma~8]{BrandonLund}) to $G,$ we obtain the following.
\begin{lemma} \label{lem:expander-mixing-weighted}
    For any $f, g \colon V(G) \rightarrow \mathbb{R}_{\ge 0}$, it holds that
    \[ \sum_{(x, y) \colon xy \in E(G)} f(x) g(y) \le \frac{2}{q} \norm{f}_1 \norm{g}_1 + 2 q^{s/2 - 1} \norm{f}_2 \norm{g}_2. \]
\end{lemma}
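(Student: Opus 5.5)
This follows from the weighted expander mixing lemma for a general $(n,d,\lambda)$-graph, once we plug in the parameters of $G$ from Lemma~\ref{lem:properties}. The general statement is
\[
\sum_{(x,y)\colon xy\in E(G)} f(x)g(y) \le \frac{d}{n}\norm{f}_1\norm{g}_1 + \lambda \norm{f}_2\norm{g}_2 .
\]
For completeness, here is how I would prove it directly. Let $A$ be the adjacency matrix of $G$, with loops counted as diagonal entries equal to $1$, consistent with how the sum counts ordered pairs including $x=y$. Then $A$ is real symmetric, and the left-hand side equals $f^{T} A g$. By $d$-regularity, the all-ones vector $\mathbf{1}$ is an eigenvector with eigenvalue $d$, and we can choose an orthonormal eigenbasis extending $\mathbf{1}/\sqrt{n}$.

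Next, decompose $f = \frac{\norm{f}_1}{n}\mathbf{1} + f'$ and $g = \frac{\norm{g}_1}{n}\mathbf{1} + g'$, where $f',g'\perp \mathbf{1}$. Here we use nonnegativity, so that the sum of the entries of $f$ equals $\norm{f}_1$. Expanding $f^T A g$, the cross terms vanish because $A\mathbf{1}=d\mathbf{1}$ is orthogonal to $f'$ and $g'$. The main term is $\frac{\norm{f}_1\norm{g}_1}{n^2}\cdot \mathbf{1}^TA\mathbf{1} = \frac{d}{n}\norm{f}_1\norm{g}_1$. The remaining term satisfies $|f'^TAg'| \le \lambda\norm{f'}_2\norm{g'}_2 \le \lambda \norm{f}_2\norm{g}_2$. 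This holds because on $\mathbf{1}^\perp$ all eigenvalues of $A$ have absolute value at most $\lambda$, and orthogonal projection does not increase the $\ell_2$ norm. Together these give the displayed inequality.

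Finally, substitute the parameters. With $t=s-1$, Lemma~\ref{lem:properties} gives $n=(1+o(1))q^{s-1}$ and $d=(1+o(1))q^{s-2}$, so $d/n=(1+o(1))q^{-1}\le 2/q$ for large $q$. Also $\lambda=(1+o(1))\sqrt d=(1+o(1))q^{(s-2)/2}\le 2q^{s/2-1}$ for large $q$. Since both terms on the right-hand side are nonnegative, replacing the coefficients by these upper bounds gives the claim.

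There is no real obstacle here. The only point needing care is checking that loops and ordered pairs are handled consistently between the sum and $f^TAg$, and that the eigenvalue bound applies to the loop-including adjacency matrix, which is exactly the setting of Lemma~\ref{lem:properties}.
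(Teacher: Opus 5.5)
Your proposal is correct and follows the paper's route: the paper simply cites the weighted expander mixing lemma (Lemma 8 of Brandon--Lund) and substitutes the parameters $n,d,\lambda$ of $G(s-1,q)$ from Lemma~\ref{lem:properties}. You also supply the standard spectral proof of the weighted lemma, and you handle loops, nonnegativity and the coefficient bounds $d/n\le 2/q$ and $\lambda\le 2q^{s/2-1}$ for large $q$ correctly.
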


We also record the unweighted version.
\begin{corollary}
    \label{cor:expander-mixing-usual}
    Let $A,B \subseteq V(G)$. Then we have 
    \[
    e_G(A,B) \le 2|A||B|/q + 2q^{s/2 -1}\sqrt{|A||B|}.
    \]
\end{corollary}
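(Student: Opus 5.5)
This is an immediate specialisation of Lemma~\ref{lem:expander-mixing-weighted}: we take $f=\mathbbm{1}_A$ and $g=\mathbbm{1}_B$, the indicator functions of $A$ and $B$. Both are nonnegative, so the lemma applies.

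For these choices the left-hand side is
\[
\sum_{(x,y)\colon xy\in E(G)}\mathbbm{1}_A(x)\mathbbm{1}_B(y).
\]
This counts ordered pairs $(x,y)\in A\times B$ with $xy\in E(G)$, which is exactly $e_G(A,B)$ under the ordered-pair convention fixed in the Notation paragraph. Loops cause no trouble: a pair $(x,x)$ with $x\in A\cap B$ and a loop at $x$ is counted once on each side.

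For the right-hand side, the norms of the indicators are $\norm{\mathbbm{1}_A}_1=|A|$ and $\norm{\mathbbm{1}_B}_1=|B|$. Likewise $\norm{\mathbbm{1}_A}_2=\sqrt{|A|}$ and $\norm{\mathbbm{1}_B}_2=\sqrt{|B|}$, since squaring an indicator leaves it unchanged. Substituting these gives
\[
e_G(A,B)\le \frac{2}{q}|A||B|+2q^{s/2-1}\sqrt{|A||B|},
\]
which is the claimed bound.

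No step here is a real obstacle. The only point to check is that the counting convention in Lemma~\ref{lem:expander-mixing-weighted}, a sum over ordered pairs, matches the ordered-pair definition of $e_G(A,B)$. This holds, so no factor-of-two correction is needed.
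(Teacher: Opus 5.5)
Your proposal is correct and is exactly the paper's proof: apply Lemma~\ref{lem:expander-mixing-weighted} with $f=\mathbbm{1}_A$ and $g=\mathbbm{1}_B$. Your check that the lemma's sum over ordered pairs matches the ordered-pair definition of $e_G(A,B)$, and your computation of the $\ell_1$ and $\ell_2$ norms of the indicators, are both correct.
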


\begin{proof}
    Apply Lemma \ref{lem:expander-mixing-weighted} with $f=\mathbbm{1}_A$ and $g=\mathbbm{1}_B$.
\end{proof}

A simple and useful corollary is the following.
\begin{corollary} \label{cor:weighted-large-neighbourhoods}
    For any $B \subseteq V(G)$ and any $\tau \ge 4 |B| / q, \tau > 0$, we have
    \[ \sum_{a \in V(G) \colon |N_G(a) \cap B| \ge \tau} |N_G(a) \cap B| \le \frac{16 q^{s-2} |B|}{\tau}. \]
\end{corollary}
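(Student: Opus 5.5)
Let $A=\{a\in V(G): |N_G(a)\cap B|\ge \tau\}$ be the set of vertices with many neighbours in $B$. The left-hand side is then exactly $e_G(A,B)$, counted as ordered pairs $(a,b)\in A\times B$ with $ab\in E(G)$. The plan is to bound $e_G(A,B)$ from above by Corollary~\ref{cor:expander-mixing-usual}, bound it from below using the definition of $A$, and compare the two to control $|A|$.

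By the definition of $A$,
\[ e_G(A,B)=\sum_{a\in A}|N_G(a)\cap B|\ge \tau|A|. \]
By Corollary~\ref{cor:expander-mixing-usual},
\[ e_G(A,B)\le 2|A||B|/q + 2q^{s/2-1}\sqrt{|A||B|}. \]
The assumption $\tau\ge 4|B|/q$ gives $2|A||B|/q\le \tau|A|/2\le e_G(A,B)/2$, so the first term can be absorbed into the left-hand side. This leaves
\[ e_G(A,B)\le 4q^{s/2-1}\sqrt{|A||B|}. \]

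Next we combine this with $|A|\le e_G(A,B)/\tau$:
\[ e_G(A,B)\le 4q^{s/2-1}\sqrt{|B|\,e_G(A,B)/\tau}. \]
Squaring and dividing by $e_G(A,B)$ (the claim is trivial if $e_G(A,B)=0$) gives
\[ e_G(A,B)\le 16q^{s-2}|B|/\tau, \]
which is the desired bound.

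The proof is routine. The only step needing care is using the hypothesis $\tau\ge 4|B|/q$ to absorb the first expander-mixing term into the left-hand side, and this is what produces the constant $16$.
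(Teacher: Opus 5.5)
Your proof is correct and follows essentially the same route as the paper: the same set $A$, the lower bound $e_G(A,B)\ge\tau|A|$, Corollary~\ref{cor:expander-mixing-usual}, and the hypothesis $\tau\ge 4|B|/q$ to absorb the $2|A||B|/q$ term. The only difference is cosmetic: the paper first extracts $|A|\le 16q^{s-2}|B|/\tau^2$ and plugs it back into the mixing bound, whereas you substitute $|A|\le e_G(A,B)/\tau$ to get a self-bounding inequality for $e_G(A,B)$ directly, which is slightly shorter.
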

\begin{proof}
    Let $A = \{ a \in V(G) \colon |N_G(a) \cap B| \ge \tau \}$ and note that we need to show that $e_G(A, B) \le 16q^{s-2} |B| / \tau.$ By definition, we have $e_G(A, B) \ge \tau |A|$. By Corollary~\ref{cor:expander-mixing-usual}, 
    \begin{equation} \label{eq:exp-mix-in-cor}
        e_G(A, B) \le 2|A||B| / q + 2q^{s/2-1} \sqrt{|A||B|}.
    \end{equation}
    Combining the two and using $\tau \ge 4|B| / q$ implies that $|A| \le 16 q^{s-2} |B| / \tau^2.$ Hence, 
    \[ |A| |B| \le 16 q^{s-2} |B|^2 / \tau^2 \le 4q^{s-1}|B| / \tau.\]
    Plugging this into~\eqref{eq:exp-mix-in-cor} yields 
    \[ e_G(A, B) \le \frac{2}{q} \cdot (4 q^{s-1} |B| / \tau) + 2q^{s/2-1} \sqrt{ 16 q^{s-2} |B|^2 / \tau^2} = 16 q^{s-2} |B| / \tau, \]
    which completes the proof.
\end{proof}

The following simple but crucial lemma uses the expander mixing lemma to bound the number of edges between two vertex sets of $\Gamma$ for which we have some control on the left or right multiplicities.
\begin{lemma} \label{lem:fibre-mixing}
    For any $A, B \subseteq V(\Gamma)$, 
    \[ |\overrightarrow{E}_{\Gamma}(A, B)| \le 2|A||B| / q + 2 q^{s/2-1} (m_A^L m_B^R |A| |B|)^{1/2}. \]
\end{lemma}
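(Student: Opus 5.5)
We reduce the count of forward edges of $\Gamma$ to a weighted edge count in $G$ and then apply Lemma~\ref{lem:expander-mixing-weighted}. By the definition of $D^*$, if $(u,v)\in\overrightarrow{E}_\Gamma(A,B)$ with $u=(a,b)$ and $v=(a',b')$, then $(u,v)\in A(D^*)$ (the arc is forward in $\pi$). Hence $ab'\in E(G)$. We discard the non-adjacency condition $a'b\notin E(G)$ and the ordering, which can only increase the count. This gives
\[
|\overrightarrow{E}_\Gamma(A,B)| \le \bigl|\{((a,b),(a',b'))\in A\times B : ab'\in E(G)\}\bigr| = \sum_{(x,y)\colon xy\in E(G)} f(x)\,g(y),
\]
where $f(x):=m_A^L(x)$ and $g(y):=m_B^R(y)$. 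The equality holds because, for fixed $x=a$ and $y=b'$, the number of choices of $b$ with $(a,b)\in A$ is $m_A^L(a)$, and the number of choices of $a'$ with $(a',b')\in B$ is $m_B^R(b')$.

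Next we apply Lemma~\ref{lem:expander-mixing-weighted} to these nonnegative functions. We have $\norm{f}_1=\sum_x m_A^L(x)=|A|$ and $\norm{g}_1=|B|$. For the $\ell_2$ norms we use $\norm{f}_2^2=\sum_x m_A^L(x)^2\le m_A^L\sum_x m_A^L(x)=m_A^L|A|$, and likewise $\norm{g}_2^2\le m_B^R|B|$. Substituting these bounds gives
\[
|\overrightarrow{E}_\Gamma(A,B)|\le \frac{2}{q}|A||B| + 2q^{s/2-1}\bigl(m_A^L m_B^R|A||B|\bigr)^{1/2},
\]
which is the claimed inequality.

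There is no real obstacle here. The only point needing care is choosing the correct pair of coordinates. The arc condition of $D^*$ pairs the left coordinate of the earlier vertex $u$ with the right coordinate of the later vertex $v$. This is why the bound involves $m_A^L$ and $m_B^R$, and not the other multiplicities.
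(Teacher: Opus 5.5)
Your proof is correct and is essentially the paper's argument. The paper also bounds $|\overrightarrow{E}_\Gamma(A,B)|$ by $\sum_{(x,y)\colon xy\in E(G)} m_A^L(x)\,m_B^R(y)$, since a forward edge $(u,v)$ forces the left coordinate of $u$ to be adjacent in $G$ to the right coordinate of $v$, and then applies Lemma~\ref{lem:expander-mixing-weighted} with $f=m_A^L$, $g=m_B^R$ and the bounds $\norm{f}_2^2\le m_A^L|A|$, $\norm{g}_2^2\le m_B^R|B|$; you simply spell out the details that the paper leaves implicit.
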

\begin{proof}
    Note that, by construction,
    \[  |\overrightarrow{E}_{\Gamma}(A, B)| \le \sum_{(x,y) \colon xy \in E(G)} m_A^L(x) m_B^R(y). \]
    Applying Lemma~\ref{lem:expander-mixing-weighted} with $f(x) = m_A^L(x)$ and $g(y) = m_B^R(y)$, we have
    \[ \sum_{(x,y) \colon xy \in E(G)} m_A^L(x) m_B^R(y) \le 2|A| |B| / q + 2 q^{s/2-1} \norm{f}_2 \norm{g}_2 \le 2|A| |B| / q + 2 q^{s/2-1} (m_A^L m_B^R |A| |B|)^{1/2}. \]
\end{proof}

The following lemma has two parts. Recall that we construct the desired $\ell$-cliques by picking two vertices at a time and we wish to control intersections of common neighbourhoods of various subsets of the produced $\ell$-cliques. For such subsets which only contain the leftmost of the two newly added vertices, the first part of the lemma will be used to argue that for a typical vertex $u$ in the current candidate set $U$, the intersection of a given set $W$ with the right neighbourhood of $u$ contains few vertices with unexpectedly large left multiplicity. Naturally, the analogous statement holds for subsets to which we add the rightmost of the two added new vertices, which we shall argue by symmetry.

The second part of the lemma will be used for subsets containing both of the new vertices by arguing that for a typical ordered edge $(u,v) \in \overrightarrow{E}_\Gamma(U)$, the intersection of the right neighbourhood of $u$, the left neighbourhood of $v$ and a given set $W$ contains few vertices with unexpectedly large left multiplicity. Again, the analogous symmetric statement holds, but we do not state it here as we shall argue by symmetry.

\begin{lemma} \label{lem:expected-number-of-tau-faulty-right-neighbourhood}
    Let $U, W \subseteq V(\Gamma)$ be arbitrary and let $\tau > 0$ be a real number satisfying $\tau \ge 4 m_W^L / q$. For $u \in U$, define $W_u = N^R(u) \cap W$ and $F_u = \{ (x, y) \in W_u \mid m_{W_u}^L(x) \ge \tau \}.$
    Then,
    \[ \sum_{u \in U} |F_u| \le \frac{16 q^{s-2} |W| m_U^L}{\tau}. \]
    Assume furthermore that $|N^R(u) \cap U| \le 100|U| / q$ for all $u \in U$. Then,
    \[ \sum_{(u, v) \in \overrightarrow{E}_{\Gamma}(U)} |F_u \cap N^L(v)| \le \left( |U||W| m_U^L q^{s-4} / \tau + |U| (m_U^L m_U^R |W|)^{1/2} q^{s-5/2} \right) \cdot (\log q)^2. \]
\end{lemma}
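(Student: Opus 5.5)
The plan is to reduce both parts to the expander mixing results of Section~\ref{sec: preliminaries}, by unpacking the definition of the arcs of $D^{*}$ coordinatewise. Write $u=(a,b)$ and $w=(x,y)$. If $w\in N^R(u)$ then $(u,w)\in A(D^*)$, so in particular $ay\in E(G)$. Hence
\[
W_u\subseteq W'_a:=\{(x,y)\in W: y\in N_G(a)\},
\]
and $m^L_{W_u}(x)\le m^L_{W'_a}(x)=|N_G(a)\cap B_x|$, where $B_x:=\{y:(x,y)\in W\}$. Consequently $F_u\subseteq F'_a:=\{(x,y)\in W'_a : |N_G(a)\cap B_x|\ge\tau\}$, and the set $F'_a$ depends on $u$ only through its left coordinate $a$.

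\textbf{First part.} Grouping the vertices $u\in U$ by their left coordinate gives
\[
\sum_{u\in U}|F_u|\le \sum_a m_U^L(a)\,|F'_a|\le m_U^L\sum_a |F'_a| .
\]
Next I swap the order of summation:
\[
\sum_a|F'_a|=\sum_x\ \sum_{a:\,|N_G(a)\cap B_x|\ge\tau}|N_G(a)\cap B_x| .
\]
For each fixed $x$ we have $|B_x|\le m_W^L$, so the hypothesis gives $\tau\ge 4|B_x|/q$. Corollary~\ref{cor:weighted-large-neighbourhoods} then bounds the inner sum by $16q^{s-2}|B_x|/\tau$. Summing over $x$ and using $\sum_x|B_x|=|W|$ gives the first claim.

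\textbf{Second part.} I would rewrite the sum as $\sum_{u\in U}|\overrightarrow{E}_\Gamma(F_u,\,N^R(u)\cap U)|$. Indeed, a pair $(w,v)$ with $w\in F_u$, $v\in N^R(u)\cap U$ and $w\in N^L(v)$ is exactly an ordered edge from $F_u$ to $B_u:=N^R(u)\cap U$. Applying Lemma~\ref{lem:fibre-mixing} directly would produce the factor $m^L_{F_u}$, which is uncontrolled. So I first split $F_u$ dyadically:
\[
F_u^{k}:=\{(x,y)\in W_u: 2^k\tau\le m^L_{W_u}(x)<2^{k+1}\tau\},\qquad k\ge 0 .
\]
Only $O(\log q)$ values of $k$ are relevant, since multiplicities are at most $|V(G)|\le 2q^{s-1}$ while $\tau$ is at least a fixed negative power of $q$ in all applications (or one absorbs the tiny range trivially). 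On each layer we have $m^L_{F_u^k}<2^{k+1}\tau$. Moreover $F_u^k$ is contained in the set $F_u$ defined with threshold $2^k\tau\ge 4m_W^L/q$, so the first part gives
\[
\sum_u|F_u^k|\le \frac{16q^{s-2}|W|m_U^L}{2^k\tau}.
\]

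Lemma~\ref{lem:fibre-mixing}, together with $|B_u|\le100|U|/q$ and $m^R_{B_u}\le m_U^R$, gives
\[
|\overrightarrow{E}_\Gamma(F_u^k,B_u)|\le \frac{200|F_u^k||U|}{q^2}+2q^{s/2-1}\Bigl(2^{k+1}\tau\, m_U^R\,|F_u^k|\cdot\frac{100|U|}{q}\Bigr)^{1/2}.
\]
I then sum over $u$ and over the $O(\log q)$ layers.

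For the first term, summing over $u$ gives $O(|U||W|m_U^Lq^{s-4}/(2^k\tau))$ per layer. Over all layers this is $O(|U||W|m_U^Lq^{s-4}/\tau)$.

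For the second term, Cauchy--Schwarz over $u$ gives
\[
\sum_u\bigl(2^{k+1}\tau\,|F_u^k|\bigr)^{1/2}\le\Bigl(|U|\,2^{k+1}\tau\sum_u|F_u^k|\Bigr)^{1/2}\le\bigl(32\,|U|\,q^{s-2}|W|m_U^L\bigr)^{1/2},
\]
and the factor $2^k\tau$ cancels. Multiplying by the prefactor $2q^{s/2-1}(100\,m_U^R|U|/q)^{1/2}$ yields $O\bigl(|U|(m_U^Lm_U^R|W|)^{1/2}q^{s-5/2}\bigr)$ per layer. The number of layers and the constants are absorbed into $(\log q)^2$ for large $q$.

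I expect the main obstacle to be the dyadic layering in the second part. The naive bound on $m^L_{F_u}$ by $m_W^L$ gives the wrong quantity, and one has to notice that the multiplicity lower bound defining $F_u$ exactly trades off against the first-part bound at each scale.
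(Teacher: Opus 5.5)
Your proposal is correct and follows the paper's proof essentially step for step. The first part applies Corollary~\ref{cor:weighted-large-neighbourhoods} to each fibre $B_x$ and picks up the factor $m_U^L$ from grouping by left coordinate; the second part uses a dyadic split of $F_u$, Lemma~\ref{lem:fibre-mixing} on each layer, and Cauchy--Schwarz so that the $2^k\tau$ factors cancel. The one loose point, the number of layers, needs no appeal to ``applications'': for $W\neq\emptyset$ the hypothesis gives $\tau\ge 4m_W^L/q\ge 4/q$, and multiplicities are at most $d=O(q^{s-2})$, so there are $O(s\log q)$ nonempty layers, which matches the paper's range $t\in[1,2s\log q]$.
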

\begin{proof}
    Fix $x \in V(G)$ and let $R_x = \{ y \mid (x, y) \in W \}$. Since $\tau \ge 4m_W^L/ q \ge 4 |R_x| / q$, by Corollary~\ref{cor:weighted-large-neighbourhoods},
    \[ \sum_{a \in V(G) \colon |N_G(a) \cap R_x| \ge \tau} |N_G(a) \cap R_x| \le \frac{16 q^{s-2} |R_x|}{\tau}. \]    
    Recalling the definition of $m_U^L$, the number of triples $(u, x, y) \in U \times V(G) \times V(G)$ such that $(x, y) \in W_u$ and $m_{W_u}^L(x) \ge \tau$ is at most
    \[ m_U^L \cdot \sum_{x \in V(G)} \frac{16 q^{s-2} |R_x|}{\tau} \le \frac{16 m_U^L q^{s-2} |W|}{\tau}, \]
    which proves the first part of the lemma.

    Now assume that $|N^R(u) \cap U| \le 100 |U| / q$ for all $u \in U$. For $t \in [1, 2s \log q],$ let $F_u^t = \{ (x,y) \in W_u \mid m_{W_u}^L(x) \in [2^{t-1} \tau, 2^t \tau) \}$ and note that by the first part of the lemma, we have 
    \begin{equation} \label{eq:sum-Fut}
        \sum_{u \in U} |F_u^t| \le 16 q^{s-2} |W| m_U^L / (2^{t-1} \tau).
    \end{equation}
    By Lemma~\ref{lem:fibre-mixing}, for any $u \in U$ and $t \in [1, 2s \log q]$, we have
    \begin{align*}
        |\overrightarrow{E}_{\Gamma}(F_u^t, N^R(u) \cap U)| &\le 2 |F_u^t| |N^R(u) \cap U| / q + 2 q^{s/2-1} (2^t \tau \cdot m_U^R \cdot |F_u^t| \cdot |N^R(u) \cap U|)^{1/2}.
    \end{align*}
    Using Cauchy-Schwarz,~\eqref{eq:sum-Fut} and the assumption that $|N^R(u) \cap U| \le 100 |U| / q$ for all $u \in U,$ it follows that for any $t \in [1, 2s \log q]$,
    \begin{align*} 
        \sum_{u \in U} |\overrightarrow{E}_{\Gamma}(F_u^t, N^R(u) \cap U)| &\le 64 q^{-1} q^{s-2} |W| m_U^L / (2^t \tau) \cdot (100|U| / q)\\
        &+ 128|U| \cdot (|W| m_U^L m_U^R)^{1/2} \cdot q^{s-5/2}.
    \end{align*}
    Since for any $u \in U$, $F_u = \bigcup_{t=1}^{2s \log q} F_u^t$, summing over all $t \in [2s \log q]$ and using that $q$ is sufficiently large proves the second part of the claim.  
\end{proof}

The next two lemmas allow us to pass from an arbitrary set of vertices to a large subset with well-controlled left and right multiplicities. The first of these lemmas shows we can regularise the left and right multiplicities, losing only a polylogarithmic factor in the size of the set, and in the second lemma we use the pseudorandomness of the underlying graph $G$ to show that the product of the maximum left and right multiplicities cannot be too large.

\begin{lemma}\label{lem:coordinate-regularisation}
For every $X\subseteq V(\Gamma)$, there exist a subset $Y\subseteq X$ and positive reals $\delta_1,\delta_2$ such that $|Y|\ge |X|/(\log q)^3$, and
\[
\frac{\delta_1}{(\log q)^3}\le m_Y^L(a)\le\delta_1
\quad\text{for every }a\in L(Y),\qquad
\frac{\delta_2}{(\log q)^3}\le m_Y^R(b)\le\delta_2
\quad\text{for every }b\in R(Y).
\]
\end{lemma}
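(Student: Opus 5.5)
The plan is to combine two rounds of dyadic pigeonholing, one per coordinate, with a final greedy pruning step. If $X=\emptyset$ the statement is trivial, so assume $X\neq\emptyset$. All multiplicities are integers in $[1,|V(G)|]\subseteq[1,q^{s}]$, so there are at most $2s\log q$ dyadic classes.

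\emph{Left pigeonhole.} For each $i$, let $X^{(i)}$ be the set of $(a,b)\in X$ with $m_X^L(a)\in[2^{i-1},2^i)$. By pigeonhole, some class satisfies $|X^{(i)}|\ge |X|/(2s\log q)$. Set $X_1=X^{(i)}$ and $\delta_1=2^i$. Every $a\in L(X_1)$ then satisfies $\delta_1/2\le m_{X_1}^L(a)<\delta_1$, since restricting to the class keeps all of $a$'s elements. It follows that $|L(X_1)|\le 2|X_1|/\delta_1$.

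\emph{Right pigeonhole.} Repeat the same argument inside $X_1$ with right multiplicities $m_{X_1}^R(b)$. This gives $X_2\subseteq X_1$ and $\delta_2$ with $|X_2|\ge |X_1|/(2s\log q)$ and $\delta_2/2\le m_{X_2}^R(b)<\delta_2$ for all $b\in R(X_2)$. Hence $|R(X_2)|\le 2|X_2|/\delta_2$. The upper bound $m_{X_2}^L(a)\le\delta_1$ survives, but left multiplicities may now have dropped arbitrarily. This loss is the only real obstacle, and the pruning step handles it.

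\emph{Pruning.} Starting from $X_2$, repeat the following while possible. If some left coordinate $a$ has current multiplicity in $(0,\delta_1/(\log q)^3)$, delete all remaining elements with left coordinate $a$. If some right coordinate $b$ has current multiplicity in $(0,\delta_2/(\log q)^3)$, delete all remaining elements with right coordinate $b$. Let $Y$ be the final set.

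Each left coordinate is wiped out at most once, costing fewer than $\delta_1/(\log q)^3$ elements. The left coordinates involved lie in $L(X_1)$, so left deletions total at most
\[
\frac{2|X_1|}{\delta_1}\cdot\frac{\delta_1}{(\log q)^3}=\frac{2|X_1|}{(\log q)^3}\le\frac{4s|X_2|}{(\log q)^2}.
\]
Similarly, right deletions total at most $2|X_2|/(\log q)^3$. For $q$ large the total is at most $|X_2|/2$. Therefore
\[
|Y|\ge \frac{|X_2|}{2}\ge \frac{|X|}{8s^2(\log q)^2}\ge \frac{|X|}{(\log q)^3},
\]
and in particular $Y$ is nonempty.

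At termination, every $a\in L(Y)$ has $\delta_1/(\log q)^3\le m_Y^L(a)\le\delta_1$. The lower bound holds because otherwise the process would continue, and the upper bound holds because multiplicities only decrease. The analogous bounds hold for every $b\in R(Y)$ with $\delta_2$. This proves the lemma.
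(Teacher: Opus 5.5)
Your proposal is correct and follows essentially the same route as the paper: dyadic pigeonholing of the multiplicities, then iteratively deleting coordinates whose multiplicity falls below $\delta_i/(\log q)^3$, with the number of deletions bounded via the size of the corresponding projection. The only cosmetic difference is that you pigeonhole the two coordinates one after the other (right multiplicities measured in $X_1$), whereas the paper pigeonholes simultaneously over pairs of dyadic classes with both multiplicities measured in $X$; both versions lose only $O((\log q)^2)$ before pruning.
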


\begin{proof}
For integers $i,j\ge1$, define
\[
X_{i,j}=\{(a,b)\in X:2^{i-1}\le m_X^L(a)<2^i\text{ and }2^{j-1}\le m_X^R(b)<2^j\}.
\]
Since $|V(\Gamma)|=q^{O(1)}$, there are $O(\log q)$ possible values for each of $i$ and $j$. Hence, by the pigeonhole principle, there exist $i_0,j_0$ such that $|X_{i_0,j_0}|\ge |X|/O((\log q)^2)$. Set $X_0=X_{i_0,j_0}$, $\delta_1=2^{i_0}$ and $\delta_2=2^{j_0}$.

We now iteratively prune $X_0$. Let $T$ denote the current set, initially $T=X_0$. If there exists $a\in L(T)$ such that $m_T^L(a)<\delta_1/(\log q)^3$, delete all pairs of $T$ whose first coordinate is $a$. Similarly, if there exists $b\in R(T)$ such that $m_T^R(b)<\delta_2/(\log q)^3$, delete all pairs whose second coordinate is $b$. Repeat until no such coordinate remains, and denote the final set by $Y$.

We show that not too many pairs are deleted. Since every $a\in L(X_0)$ satisfies $m_X^L(a)\ge\delta_1/2$, we have $|L(X_0)|\le2|X|/\delta_1$, and hence fewer than $2|X|/(\log q)^3$ pairs are deleted because of first coordinates. Similarly, fewer than $2|X|/(\log q)^3$ pairs are deleted because of second coordinates. Thus fewer than $4|X|/(\log q)^3$ pairs are deleted in total. Since $|X_0|\ge |X|/O((\log q)^2)$, for sufficiently large $q$ we obtain $|Y|\ge |X|/(\log q)^3$.

By construction, every $a\in L(Y)$ and $b\in R(Y)$ satisfy $m_Y^L(a)\ge\delta_1/(\log q)^3$ and $m_Y^R(b)\ge\delta_2/(\log q)^3$. On the other hand, since $Y\subseteq X_0$, we have $m_Y^L(a)\le m_X^L(a)<\delta_1$ and $m_Y^R(b)\le m_X^R(b)<\delta_2$. The lemma follows.
\end{proof}

\begin{lemma}\label{product-deltas-bound}
Let $Y\subseteq V(\Gamma)$ satisfy $|Y| \ge q^{s-1}$, and suppose that there exist positive reals $\delta_1,\delta_2$ such that
\[
\frac{\delta_1}{(\log q)^3}\le m_Y^L(a)\le\delta_1
\quad\text{for every }a\in L(Y),\qquad
\frac{\delta_2}{(\log q)^3}\le m_Y^R(b)\le\delta_2
\quad\text{for every }b\in R(Y).
\]
Then we have
\[
\delta_1\delta_2\le \frac{|Y|(\log q)^7}{q}.
\]
\end{lemma}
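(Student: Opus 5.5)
The key observation is that every vertex $(a,b)$ of $\Gamma$ satisfies $ab \in E(G)$. Hence $Y$ gives rise to many edges of $G$ between the projections $L(Y)$ and $R(Y)$. Writing $A = L(Y)$ and $B = R(Y)$, I will compare the lower bound $e_G(A,B) \ge |Y|$ with the upper bound from Corollary~\ref{cor:expander-mixing-usual}. The multiplicity assumptions control the sizes of the projections: $|Y| \ge |A|\delta_1/(\log q)^3$, so $|A| \le |Y|(\log q)^3/\delta_1$, and similarly $|B| \le |Y|(\log q)^3/\delta_2$. Note that $e_G(A,B)$ counts ordered pairs, and the map $(a,b)\mapsto(a,b)$ is injective from $Y$ into such pairs, so indeed $e_G(A,B)\ge |Y|$.

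From Corollary~\ref{cor:expander-mixing-usual},
\[
|Y| \le e_G(A,B) \le \frac{2|A||B|}{q} + 2q^{s/2-1}\sqrt{|A||B|}.
\]
I split into two cases according to which term dominates.

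\textbf{First term dominates.} Then $|Y| \le 4|A||B|/q$. Substituting the projection bounds gives
\[
|Y| \le \frac{4|Y|^2(\log q)^6}{q\,\delta_1\delta_2},
\]
which rearranges to $\delta_1\delta_2 \le 4|Y|(\log q)^6/q \le |Y|(\log q)^7/q$ for large $q$. This is the desired bound.

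\textbf{Second term dominates.} Then $|Y| \le 4q^{s/2-1}\sqrt{|A||B|}$, so $|A||B| \ge |Y|^2/(16q^{s-2})$. Combined with the projection bounds, this gives $\delta_1\delta_2 \le 16q^{s-2}(\log q)^6$. I need this to be at most $|Y|(\log q)^7/q$, which holds whenever $|Y| \ge 16q^{s-1}/\log q$. The hypothesis $|Y|\ge q^{s-1}$ guarantees this for large $q$.

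The only delicate point is making the second case close. It needs exactly the threshold $|Y|\ge q^{s-1}$, and it relies on the spare logarithmic factor $(\log q)^7$ versus $(\log q)^6$ to absorb the constant $16$. No step seems genuinely hard; the proof is a direct application of the expander mixing lemma to the projections.
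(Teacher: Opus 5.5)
Your proof is correct and follows essentially the same route as the paper. Both bound $|Y|\le e_G(L(Y),R(Y))$ via Corollary~\ref{cor:expander-mixing-usual} and substitute $|L(Y)|\le |Y|(\log q)^3/\delta_1$ and $|R(Y)|\le |Y|(\log q)^3/\delta_2$; the only difference is that the paper assumes $\delta_1\delta_2>|Y|(\log q)^7/q$ and shows both terms are $o(|Y|)$, while you split directly according to which term is at least $|Y|/2$.
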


\begin{proof}
Recall that $V(\Gamma)$ consists of ordered pairs of vertices forming an edge in $G$. Thus, we have
\[
|Y|\le e_G(L(Y),R(Y)) \le \frac{2}{q}|L(Y)||R(Y)|+2q^{s/2-1}\sqrt{|L(Y)||R(Y)|},
\]
where we used Corollary~\ref{cor:expander-mixing-usual}. Since \(m_Y^L(a)\ge \delta_1/(\log q)^3\) for every \(a\in L(Y)\), we have
$|L(Y)|\le |Y|(\log q)^3/\delta_1$ and similarly
$|R(Y)|\le |Y|(\log q)^3/\delta_2$. Hence
\[
|Y|\le \frac{2}{q}\frac{|Y|^2(\log q)^6}{\delta_1\delta_2}
+2q^{s/2-1}\frac{|Y|(\log q)^3}{\sqrt{\delta_1\delta_2}}.
\]
Suppose towards a contradiction that
$\delta_1\delta_2>|Y|(\log q)^7/q$. Then the right-hand side is
\[
O\left(\frac{|Y|}{\log q}\right)
+
O\left(|Y|\sqrt{\frac{q^{s-1}}{|Y|\log q}}\right)
=o(|Y|),
\]
where we used $|Y|\ge q^{s-1}$. This is a contradiction.
\end{proof}

\begin{corollary}\label{cor:regularised-subset}
Let $X\subseteq V(\Gamma)$ satisfy
$|X|\ge q^{s-1}(\log q)^3$. Then there exists $Y\subseteq X$ such that
\[
|Y|\ge \frac{|X|}{(\log q)^3}
\qquad\text{and}\qquad
m_Y^Lm_Y^R\le \frac{|Y|(\log q)^7}{q}.
\]
\end{corollary}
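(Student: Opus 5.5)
This follows by chaining Lemma~\ref{lem:coordinate-regularisation} with Lemma~\ref{product-deltas-bound}. Given $X$ with $|X|\ge q^{s-1}(\log q)^3$, apply Lemma~\ref{lem:coordinate-regularisation} to obtain $Y\subseteq X$ and positive reals $\delta_1,\delta_2$. Then $|Y|\ge |X|/(\log q)^3$, every $a\in L(Y)$ has $\delta_1/(\log q)^3\le m_Y^L(a)\le\delta_1$, and every $b\in R(Y)$ has $\delta_2/(\log q)^3\le m_Y^R(b)\le\delta_2$.

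Next I check the size hypothesis of Lemma~\ref{product-deltas-bound}: $|Y|\ge |X|/(\log q)^3\ge q^{s-1}$. The two-sided multiplicity bounds are exactly the remaining hypotheses of that lemma, so it yields $\delta_1\delta_2\le |Y|(\log q)^7/q$.

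Finally I pass from $\delta_1,\delta_2$ to the maximum multiplicities. For $a\notin L(Y)$ we have $m_Y^L(a)=0$, so $m_Y^L=\max_a m_Y^L(a)\le\delta_1$. Likewise $m_Y^R\le\delta_2$. Hence $m_Y^Lm_Y^R\le\delta_1\delta_2\le |Y|(\log q)^7/q$.

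There is no real obstacle here. The only point needing attention is that the lower bound $|X|\ge q^{s-1}(\log q)^3$ is chosen precisely so that the regularised set still satisfies $|Y|\ge q^{s-1}$, as Lemma~\ref{product-deltas-bound} requires. This step also implicitly assumes $Y$ is nonempty, which holds because $|Y|\ge q^{s-1}>0$.
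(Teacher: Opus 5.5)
Your proposal is correct and is exactly the paper's argument: apply Lemma~\ref{lem:coordinate-regularisation}, observe that $|Y|\ge |X|/(\log q)^3\ge q^{s-1}$ so Lemma~\ref{product-deltas-bound} applies, and conclude using $m_Y^L\le\delta_1$ and $m_Y^R\le\delta_2$. You have simply spelled out the routine checks that the paper's one-line proof leaves implicit.
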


\begin{proof}
    The proof follows immediately from Lemmas \ref{lem:coordinate-regularisation} and \ref{product-deltas-bound}.
\end{proof}

Next, we establish a local density property of $\Gamma$. Using the bound on the number of independent sets given by Lemma \ref{lem:number-of-ind-sets}, we show that every sufficiently large subset of $V(\Gamma)$ must span many edges. This is proven implicitly in~\cite{off-diagonal}, but we shall deduce it here using Lemma~\ref{lem:number-of-ind-sets} as a black box.

\begin{lemma}
    \label{lem: local-density}
    Let $U\subseteq V(\Gamma)$ satisfy $|U|\ge 4q^{s-1}\log q$. Then
    \[
    e_\Gamma(U)\ge \frac{|U|^2}{q(\log q)^4}.
    \]
\end{lemma}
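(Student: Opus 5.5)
The plan is a counting argument on independent sets: if $U$ spanned few edges, then $U$ would contain too many independent sets of size $k=q(\log q)^3$, contradicting Lemma~\ref{lem:number-of-ind-sets}. Suppose towards a contradiction that $e_\Gamma(U) < |U|^2/(q(\log q)^4)$, and write $N=|U|$.

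First I lower-bound the number of independent $k$-subsets of $U$. I pick a uniformly random $2k$-subset $S\subseteq U$. The expected number of edges inside $S$ is
\[
e_\Gamma(U)\cdot\frac{\binom{2k}{2}}{\binom{N}{2}}\le e_\Gamma(U)\cdot\frac{4k^2}{N^2}\cdot(1+o(1)) < \frac{5k^2}{q(\log q)^4}=\frac{5k(\log q)^3}{(\log q)^4}\cdot\frac{q}{q}\cdot\frac{k}{q(\log q)^3}\cdot(\log q)^{3} ,
\]
which, since $k^2/(q(\log q)^4)=k/\log q$, equals $5k/\log q=o(k)$. So at least half of the $2k$-subsets contain at most $k$ edges. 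Deleting one endpoint of each such edge leaves an independent set of size at least $k$, and I take any $k$-subset of it.

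Next I account for overcounting. Each independent $k$-set lies in at most $\binom{N-k}{k}$ of these $2k$-sets. Hence the number of independent $k$-subsets of $U$ is at least
\[
\frac{\binom{N}{2k}}{2\binom{N-k}{k}}\ge 4^{-k}\binom{N}{k}\ge \Bigl(\frac{N}{4ek}\Bigr)^{k}\cdot(1+o(1))^k .
\]
The middle inequality is the same one used in the optimality discussion after Lemma~\ref{lem:counting-kl-free}. It follows from $\binom{N}{2k}\binom{2k}{k}=\binom{N}{k}\binom{N-k}{k}$ and $\binom{2k}{k}\le 4^k$.

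Finally I compare with Lemma~\ref{lem:number-of-ind-sets}. Using $N\ge 4q^{s-1}\log q$, the lower bound is at least $(q^{s-1}\log q/(ek))^k$. This falls short of the upper bound $(q^{s-1}\log q/k)^k$ by a factor $e^k$. That is the main obstacle: the constants must be tuned so that the lower bound strictly exceeds the upper bound.

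To fix this, I sample a $Ck$-subset rather than a $2k$-subset, for a suitable constant $C$, or better $C=C(q)\to\infty$ slowly, say $C=\log\log q$. The expected number of edges is then $C^2k/\log q$, which is still at most $(C-1)k/2$. After deletion at least $k$ vertices survive. The counting then gives at least
\[
\frac{\binom{N}{Ck}}{2\binom{Ck}{k}\binom{N-k}{(C-1)k}}\cdot\binom{Ck}{k}\cdots
\]
independent sets. Equivalently, I count pairs (independent $k$-set $I$, $Ck$-set $S\supseteq I$) to get at least $\tfrac12\binom{N}{k}\big/\binom{Ck}{k}\cdot\binom{Ck}{k}/\binom{Ck}{k}$. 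In the cleanest form, each $S$ yields an $I$ and each $I$ lies in $\binom{N-k}{Ck-k}$ sets $S$, so the count is at least $\tfrac12\binom{N}{Ck}/\binom{N-k}{Ck-k}\approx \tfrac12 (N/(Ck))^k\cdot(\ldots)$. I expect to lose only a factor $C^{O(k)}$ here, so this alone does not settle it; the real gain has to come from the size of $N$.

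The robust route is therefore to apply the argument to a subset of $U$ of size $N'=\min(N,\;M)$ with $M$ chosen so that the edge bound still works. Since $e_\Gamma(U)$ small means a random $N'$-subset has few edges, $U$ has sets of density $<1/(q(\log q)^4)$ of all sizes. For the lower-bound count I use the largest possible $N$. Here I need $N\ge 4q^{s-1}\log q$ to beat $e\,q^{s-1}\log q$: the lower bound $(N/(4ek))^k$ compared with $(q^{s-1}\log q/k)^k$ requires $N>4e\,q^{s-1}\log q$. Hence the precise constant bookkeeping, possibly trading the $(\log q)^4$ slack against the $4^{-k}$ factor via a $2k\to Ck$ choice, is where the proof is delicate. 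I would first try $C$ a large absolute constant, where the loss becomes $(C/(C-1))^{k}$-type rather than $4^k$, and check the resulting inequality.
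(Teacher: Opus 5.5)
Your sampling strategy is sound; it is the same double count as in the optimality discussion after Lemma~\ref{lem:counting-kl-free}. But the proposal as written never reaches a contradiction. You end with a lower bound $(N/(4ek))^k$ that falls short of the bound $(q^{s-1}\log q/k)^k$ of Lemma~\ref{lem:number-of-ind-sets} by a factor $e^k$, and you leave the repair open.

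That shortfall comes only from using the weak estimate $\binom{N}{k}\ge (N/(ek))^k$. The correct elementary estimate is $\binom{N}{k}=\prod_{i=0}^{k-1}\frac{N-i}{k-i}\ge (N/k)^k$, with strict inequality when $N>k\ge 2$. With it, your $2k$-sample argument closes at exactly the stated constant. Markov gives at least $\tfrac12\binom{N}{2k}$ sets of size $2k$ spanning at most $k$ edges; your $O(k/\log q)$ bound on the expected edge count is fine, although the displayed chain for it is garbled. Hence the number of independent $k$-subsets of $U$ is at least
\[
\frac{\binom{N}{2k}}{2\binom{N-k}{k}}=\frac{\binom{N}{k}}{2\binom{2k}{k}}\ge 4^{-k}\binom{N}{k}>\Bigl(\frac{N}{4k}\Bigr)^k\ge\Bigl(\frac{q^{s-1}\log q}{k}\Bigr)^k ,
\]
using $\binom{2k}{k}\le 4^k/2$ and $N\ge 4q^{s-1}\log q$. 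This contradicts Lemma~\ref{lem:number-of-ind-sets}. Drop the $(1+o(1))^k$ factors as well. The two bounds being compared have exactly the same base, so any unquantified factor of that kind would make the comparison inconclusive.

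The repairs you sketch would not have helped. Sampling sets of size $Ck$ gives, by the same double count, $\frac12\binom{N}{k}/\binom{Ck}{k}$. Since $\binom{Ck}{k}\approx (C^C/(C-1)^{C-1})^k$ increases with $C$, a large constant $C$ (or $C=\log\log q$) loses more than $C=2$, not a factor of type $(C/(C-1))^k$. To reduce the loss you would take $C$ close to $1$, which the $(\log q)^{-4}$ slack permits, but this is unnecessary. Passing to a subset of size $N'\le N$ only weakens the lower bound.

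For comparison, the paper argues deterministically. It deletes the at most $|U|/2$ vertices of degree at least $4|U|/(q(\log q)^4)$ in $\Gamma[U]$. It then greedily builds ordered independent $k$-tuples, with at least $|U'|/2$ choices at each step. Using $k!<k^k$, this gives more than $(|U'|/(2k))^k\ge (|U|/(4k))^k$ independent $k$-sets. That is the same threshold your corrected count reaches, so once the binomial estimate is fixed your route is an equally valid alternative.
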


\begin{proof}
    Suppose, for a contradiction, that there exists $U\subseteq V(\Gamma)$ satisfying
    $|U|\ge 4q^{s-1}\log q$ and
    $e_\Gamma(U)\le |U|^2/(q(\log q)^4)$. Delete all vertices with degree at least
    $4|U|/(q(\log q)^4)$ in $\Gamma[U]$. There are at most $|U|/2$ such vertices, so we are left with a subset $U'\subseteq U$ with $|U'|\ge |U|/2$ and
    \[
    \Delta(\Gamma[U'])\le \frac{4|U|}{q(\log q)^4}\le \frac{8|U'|}{q(\log q)^4}.
    \]

    Let $k=q(\log q)^3$. We greedily build independent sets of size $k$ in $\Gamma[U']$. Having chosen $i<k$ vertices, there are at least
    \[
    |U'|-i(\Delta(\Gamma[U'])+1)
    \ge |U'|-\frac{8|U'|k}{q(\log q)^4}-k
    \ge \frac{|U'|}{2}
    \]
    choices for the next vertex, provided $q$ is sufficiently large. Therefore, $\Gamma[U']$ contains at least $(|U'|/2)^k$ ordered independent $k$-tuples, and hence at least
    \[
    \frac{1}{k!}\left(\frac{|U'|}{2}\right)^k
    >\left(\frac{|U'|}{2k}\right)^k
    \ge\left(\frac{|U|}{4k}\right)^k
    \ge \left( \frac{q^{s-1}\log q}{k}\right)^k
    \]
    independent sets of size $k$, where we used the assumption that $|U| \ge 4q^{s-1}\log q$. This contradicts the fact that the permutation $\pi$ was chosen to satisfy the conclusion of Lemma~\ref{lem:number-of-ind-sets}.
\end{proof}

We now prove the following consequence of Lemma \ref{lem: local-density}, which will be convenient for later applications.
\begin{lemma}
\label{lem: large-right/left-degree}
Let $U\subseteq V(\Gamma)$ satisfy $|U|\ge q^{s-1}(\log q)^{10}$. Then at least $|U|/(\log q)^{8}$ vertices $v\in U$ satisfy
\[
|U\cap N^R(v)|\ge \frac{|U|}{q(\log q)^8}.
\]
Similarly, at least $|U|/(\log q)^{8}$ vertices $v\in U$ satisfy
\[
|U\cap N^L(v)|\ge \frac{|U|}{q(\log q)^8}.
\]
\end{lemma}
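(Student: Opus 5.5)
The plan is a standard averaging argument on top of Lemma~\ref{lem: local-density}. It suffices to prove the first statement, about right neighbourhoods. The second follows by the symmetric argument, since for every edge $uv$ of $\Gamma[U]$ with $u<v$ we have $u\in N^L(v)$ and $v\in N^R(u)$.

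First, I will make the local density bound robust by applying it to a subset. Let $B\subseteq U$ be the set of ``bad'' vertices $v$ with $|U\cap N^R(v)|<|U|/(q(\log q)^8)$. Suppose, for contradiction, that the set of good vertices $A=U\setminus B$ has $|A|<|U|/(\log q)^8$. Then $|B|\ge |U|/2$, which is at least $4q^{s-1}\log q$ for large $q$ since $|U|\ge q^{s-1}(\log q)^{10}$. So Lemma~\ref{lem: local-density} applies to $B$ and gives
\[
e_\Gamma(B)\ge \frac{|B|^2}{q(\log q)^4}\ge \frac{|U|^2}{4q(\log q)^4}.
\]
On the other hand, orient each edge of $\Gamma[B]$ from its smaller endpoint, so every edge is counted exactly once by its left endpoint. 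This gives
\[
e_\Gamma(B)=\sum_{v\in B}|B\cap N^R(v)|\le \sum_{v\in B}|U\cap N^R(v)|<\frac{|U|^2}{q(\log q)^8}.
\]
For large $q$ this contradicts the lower bound. Hence at least $|U|/(\log q)^8$ vertices are good.

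The only step requiring care is checking that the size hypothesis of Lemma~\ref{lem: local-density} survives passing to $B$. It does, with plenty of room, since $(\log q)^{10}/2\ge 4\log q$. The rest is arithmetic, so I expect no real obstacle. The left-neighbourhood statement is identical, counting each edge of $\Gamma[B]$ by its right endpoint via $N^L$.
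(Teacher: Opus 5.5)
Your proof is correct, and it takes a different and shorter route than the paper's.

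You use the fact that Lemma~\ref{lem: local-density} holds for every sufficiently large subset, and apply it directly to the set of bad vertices. Every edge inside that set is counted by a small right-degree, which gives an immediate contradiction. In fact your argument shows more than required: at most $|U|/(\log q)^4$ vertices of $U$ are bad.

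The paper instead applies Lemma~\ref{lem: local-density} only once, to a pruned set $U_0$. It builds $U_0$ in several steps:
\begin{itemize}
\item It regularises $U$ via Lemma~\ref{lem:coordinate-regularisation} to obtain $Y$.
\item It uses Lemmas~\ref{lem:fibre-mixing} and~\ref{product-deltas-bound}, i.e.\ the expander mixing lemma, to get the upper bound $e_\Gamma(Y)\le 4|Y|^2/q$.
\item It deletes high-degree vertices to obtain $U_0$ with $\Delta(\Gamma[U_0])\le 32|U_0|/q$.
\end{itemize}
It then uses this maximum-degree cap to turn $e_\Gamma(U_0)\ge |U_0|^2/(q(\log q)^4)$ into a lower bound on the number of vertices of large right-degree. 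The cap is needed there to rule out the edges being concentrated on few vertices, and it costs the pseudorandomness machinery.

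Your argument never faces that issue, because you bound the edges inside the bad set from above directly by the small degrees. So you need nothing beyond Lemma~\ref{lem: local-density}, and you get a stronger quantitative conclusion. Since the later applications (Lemma~\ref{lem:good-candidate-pairs} and Claim~\ref{many-good-v1}) use only the statement of the lemma, your proof could replace the paper's without further changes.
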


\begin{proof}
Apply Lemma~\ref{lem:coordinate-regularisation} to $U$ to obtain
$Y\subseteq U$ and positive reals $\delta_1,\delta_2$. Then applying Lemma \ref{lem:fibre-mixing}, we get
\[
e_\Gamma(Y)=|\overrightarrow{E}_\Gamma(Y)| \le 2|Y|^2/q + 2q^{s/2-1}|Y|\sqrt{\delta_1 \delta_2}.
\]
Since $|Y| \ge |U|/(\log q)^3 \ge q^{s-1}(\log q)^7$, by Lemma \ref{product-deltas-bound}, we have $\delta_1\delta_2 \le |Y|(\log q)^7/q$. Combined with the lower bound on $|Y|$, this implies that the second term in the above inequality is at most $2|Y|^2/q$. Consequently, we have $e_\Gamma(Y) \le 4|Y|^2/q$.

Delete from $Y$ every vertex whose degree in $\Gamma[Y]$ is at least
$16|Y|/q$. Since
$e_\Gamma(Y)\le 4|Y|^2/q$, at most $|Y|/2$ vertices are deleted.
Thus we obtain a set $U_0\subseteq Y$ with $|U_0|\ge |Y|/2$ and
\[
\Delta(\Gamma[U_0])\le \frac{16|Y|}{q} \le \frac{32|U_0|}{q}.
\]
Moreover,
$|U_0|\ge |U|/(2(\log q)^3)\ge 4q^{s-1}\log q$ for sufficiently large $q$, so
Lemma~\ref{lem: local-density} gives
\[
\sum_{v\in U_0}|U_0\cap N^R(v)|
=e_\Gamma(U_0)
\ge \frac{|U_0|^2}{q(\log q)^4}.
\]

Let
\[
T:=\left\{v\in U_0:|U_0\cap N^R(v)|
\ge \frac{|U_0|}{2q(\log q)^4}\right\}.
\]
The vertices in $U_0\setminus T$ contribute at most
$|U_0|^2/(2q(\log q)^4)$ to the preceding sum, so the vertices in $T$
contribute at least this much. On the other hand,
\[
\sum_{v\in T}|U_0\cap N^R(v)|
\le |T|\frac{32|U_0|}{q},
\]
and hence $|T|\ge |U_0|/(2^6(\log q)^{4})$. Since
$|U_0|\ge |U|/(2(\log q)^3)$, for sufficiently large $q$ we have
$|T|\ge |U|/(\log q)^{8}$. Furthermore, every $v\in T$ satisfies
\[
|U\cap N^R(v)|
\ge |U_0\cap N^R(v)|
\ge \frac{|U_0|}{2q(\log q)^4}
\ge \frac{|U|}{q(\log q)^8}.
\]
The second part of the lemma follows identically.
\end{proof}

With the preliminary lemmas in place, we proceed to the proof of the key technical lemma of this paper.

\section{The key lemma}
\label{sec: key lemma}
\subsection{Setup}
In this section we fix an odd integer $r \le \ell$. We say a pair $(I,J)$, where $I, J \subseteq [r]$ is \emph{relevant} if $|I|, |J| \le \frac{r+1}{2}$ and $I = \emptyset$ or $J = \emptyset$ or $\max I + 1 < \min J$. Note that a relevant pair $(I, J)$ satisfies $i < j, \forall i \in I, j \in J$, which we abbreviate as $I < J$. We remark that relevant pairs $(I, J)$ with $J = \emptyset$ will not actually be needed for the proof but we include them to make the statements symmetric with respect to the ordering of $\Gamma$.

Given an index set $P$ of integers, we say a set $e \subseteq V(\Gamma)$ is \emph{$P$-labeled} to mean that its elements are indexed as $e = \{v_i \mid i \in P\}$, where $v_i < v_{i'}$ for any $i, i' \in P$ such that $i < i'$. For a subset $P' \subseteq P$, we define $e_{P'}=\{v_i \mid i \in P'\}$. We can now state the key lemma.
\begin{lemma} \label{lem:key-technical-lemma}
    There is a positive constant $C = C(\ell, s)$ such that the following holds. Let $r \le \ell$ be an odd integer. Let $S, T_1, T_2 \subseteq V(\Gamma)$ be given sets and suppose that $|S| \ge q^{s + r - 3} (\log q)^C$ and $T_1, T_2 \supseteq S$. Assume that $m_S^L m_S^R \le \frac{|S| (\log q)^{100}}{q}$ and for any $Z \in \{T_1, T_2\}$,  it holds that 
    \[ m_S^L m_Z^R \le \frac{|Z| (\log q)^{100}}{q} \text{ and } m_S^R m_Z^L \le \frac{|Z| (\log q)^{100}}{q}. \]

    Then, there is an $r$-uniform hypergraph $\calH$ on the vertex set $S$ such that $e(\calH) \ge |S|^r q^{-\binom{r}{2}} (\log q)^{-C}$ and every hyperedge of $\calH$ induces an $r$-clique in $\Gamma[S]$. Furthermore, each hyperedge $e$ is an $[r]$-labeled set $e = \{ v_i \mid i \in [r]\}$ (where $v_1 < v_2 < \dots < v_r$) such that for any relevant pair $(I, J)$, and any $Z \in \{T_1, T_2\}$, it holds that
    \begin{equation} \label{eq:neighbourhood-bounds}
        |N^R(e_I) \cap N^L(e_J) \cap Z| \le \begin{cases}
        |Z| q^{-|I| - |J| + 1/2} (\log q)^C, &\text{if } |I| = (r+1)/2 \text{ and } (r+1)/2 \in I;\\
        |Z| q^{-|I| - |J| + 1/2} (\log q)^C, &\text{if }|J| = (r+1)/2 \text{ and } (r+1)/2 \in J;\\
        |Z| q^{-|I|-|J|} (\log q)^C, &\text{otherwise.}
    \end{cases}
    \end{equation} 
\end{lemma}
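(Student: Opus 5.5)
We build the labeled cliques greedily from the two ends towards the middle, in pairs. Write $r=2k+1$. The vertices are chosen in the order $(v_1,v_r),(v_2,v_{r-1}),\dots,(v_k,v_{k+2})$, and finally $v_{k+1}$.

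At each stage we maintain a candidate set. After choosing $v_1,\dots,v_p$ and $v_{r-p+1},\dots,v_r$, it is
\[
U_p=N^R(\{v_1,\dots,v_p\})\cap N^L(\{v_{r-p+1},\dots,v_r\})\cap S.
\]
Alongside it we maintain the following invariants.

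1. $|U_p|\ge |S|q^{-2p}(\log q)^{-O(1)}$.

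2. For every relevant pair $(I,J)$ with $I\subseteq[p]$, $J\subseteq\{r-p+1,\dots,r\}$, and every $Z\in\{T_1,T_2\}$, the set $W_{I,J}^Z=N^R(e_I)\cap N^L(e_J)\cap Z$ satisfies the bound \eqref{eq:neighbourhood-bounds}. (A pair whose second set contains an index in $\{p+1,\dots,r-p\}$ is handled when that vertex is added.)

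3. Each $W_{I,J}^Z$ splits as a "good" part and an exceptional part $F_{I,J}^Z$. In the good part the left multiplicity is at most $m_Z^Lq^{-|I|}(\log q)^{O(1)}$ and the right multiplicity at most $m_Z^Rq^{-|J|}(\log q)^{O(1)}$. The exceptional part $F_{I,J}^Z$ is small.

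4. The analogous multiplicity control holds for $U_p$ itself relative to $S$.

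Given these, there are $|U_p|^2q^{-1}(\log q)^{-O(1)}$ valid next pairs at every stage, and $|U_k|q^{-?}$-many choices of $v_{k+1}$ at the end. This gives $e(\calH)\ge|S|^rq^{-\binom r2}(\log q)^{-C}$, since the exponents $2p$ of the candidate sets sum correctly to $\binom r2$.

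\textbf{Choosing a new pair.} To choose $(v_{p+1},v_{r-p})$ we look at ordered edges $(u,v)\in\overrightarrow{E}_\Gamma(U_p)$.

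For size, Lemma~\ref{lem: large-right/left-degree} (applicable since $|U_p|\ge q^{s-1}(\log q)^{10}$ by the assumption on $|S|$ and $p\le k$) supplies many $u$ with large right degree into $U_p$. We first pass to a subset with $|N^R(u)\cap U_p|\le 100|U_p|/q$ for all $u$, which is possible by Lemma~\ref{lem:fibre-mixing} and Corollary~\ref{cor:regularised-subset}-type regularisation. Together these yield $|U_p|^2q^{-1}(\log q)^{-O(1)}$ candidate ordered edges.

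For each existing $W=W_{I,J}^Z$, the new sets are $W\cap N^R(u)$ (adding $u$ to $I$), $W\cap N^L(v)$ (adding $v$ to $J$), and $W\cap N^R(u)\cap N^L(v)$ (both; allowed only for relevant pairs). Here $u=v_{p+1}$ and $v=v_{r-p}$.

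Their sizes are bounded in expectation by Lemma~\ref{lem:fibre-mixing} applied to the good part of $W$ and to $U_p$. The main term is $2|U_p||W|/q$. The error term is $q^{s/2-1}(m^L_{W}m^R_{U_p}|U_p||W|)^{1/2}$, and it is dominated provided
\[
|W||U_p|\ge q^{s}\,m_W^Lm_{U_p}^R\,q^{O(1)\text{-adjustments}}.
\]
Invariants 3 and 4 give $m_W^L\le m_Z^Lq^{-|I|}$ and $m_{U_p}^R\le m_S^Rq^{-p}$. Combined with $m_S^Rm_Z^L\le|Z|(\log q)^{100}/q$, the condition reduces to $|S|\ge q^{s+r-3}(\log q)^C$, except in the middle step where $|I|$ or $|J|$ reaches $(r+1)/2$ and contains the central index. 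There the bookkeeping loses a factor $q^{1/2}$, which is exactly the relaxed bound allowed in \eqref{eq:neighbourhood-bounds}. We would use the weaker bound $e\le 2q^{s/2-1}(\cdots)^{1/2}$ directly in that case.

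The exceptional sets of the new $W$ are controlled by Lemma~\ref{lem:expected-number-of-tau-faulty-right-neighbourhood}. Its first part handles adding only $u$ (and, by symmetry, only $v$); its second part handles adding both. We take $\tau$ equal to the target multiplicity $m_Z^Lq^{-|I|-1}(\log q)^{O(1)}$, and the lemma bounds $\sum|F_u\cap N^L(v)|$ over ordered edges. Markov's inequality plus a union bound over the $O_r(1)$ relevant pairs and two choices of $Z$ then shows that a $(1-o(1))$ fraction of candidate ordered edges (or of candidates $v_{k+1}$ at the last step) preserves all invariants, with $C$ growing by an additive constant per step.

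\textbf{Main obstacle.} The hard part is proving that the exceptional sets stay small enough. Concretely, we must verify that the bound from the second part of Lemma~\ref{lem:expected-number-of-tau-faulty-right-neighbourhood}, divided by the number $|U_p|^2/q$ of ordered edges, is at most $|W|q^{-2}(\log q)^{-O(1)}$. Otherwise the exceptional part can later dominate a common neighbourhood.

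This requires the term $|U|(m_U^Lm_U^R|W|)^{1/2}q^{s-5/2}$ to be small. Using $m_{U_p}^Lm_{U_p}^R\le m_S^Lm_S^Rq^{-2p}\le|S|q^{-1-2p}$ and $|U_p|\approx|S|q^{-2p}$, this reduces once more to the threshold $|S|\ge q^{s+r-3}$. This is the reason the pairs must be chosen simultaneously rather than one at a time. I expect the careful exponent check at the central step, including the parity bookkeeping for which pairs get the extra $q^{1/2}$, to be the main difficulty.
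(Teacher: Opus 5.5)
Your overall strategy matches the paper's: pairs chosen from the two ends inward, pruned candidate sets with multiplicity control, good/exceptional splittings, Lemma~\ref{lem:fibre-mixing} together with Lemma~\ref{lem:expected-number-of-tau-faulty-right-neighbourhood}, and a $q^{1/2}$ loss only at the middle vertex. As written, though, the plan does not prove the lemma.

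\emph{The tracked family of pairs is too small.} Your invariant~2 only covers $I\subseteq[p]$, $J\subseteq\{r-p+1,\dots,r\}$, and you only ever add $u=v_{p+1}$ to $I$ and $v=v_{r-p}$ to $J$. The statement needs \eqref{eq:neighbourhood-bounds} for every relevant pair. So you must track $N^R(e_{I\cap P_p})\cap N^L(e_{J\cap P_p})\cap Z$ with $P_p=[p]\cup[r-p+1,r]$, where $I\cap P_p$ and $J\cap P_p$ may meet both blocks. For example, for $r=3$ the pair $(\emptyset,\{1,3\})$ requires $|N^L(\{v_1,v_3\})\cap Z|\le |Z|q^{-2}(\log q)^C$ with no $q^{1/2}$ slack. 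For $\ell=4$ this is exactly what the even case of Lemma~\ref{lem:final-hypergraph} needs to bound $\Delta_3$.

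Hence you also need the updates $W\cap N^R(v)$, $W\cap N^L(u)$, $W\cap N^R(u)\cap N^R(v)$ and $W\cap N^L(u)\cap N^L(v)$. The last two do not follow from your invariants. For $(\emptyset,\{1,3\})$ you must bound $\overrightarrow{E}_\Gamma\bigl(Z\cap N^L(v),\{u: uv\text{ a candidate edge}\}\bigr)$. With only $m^R\le m_S^R$ on the second set, the error term in Lemma~\ref{lem:fibre-mixing} is admissible only if $|S|\ge q^{s+1}$, a factor $q$ beyond the hypothesis. Conditioning on $u$ first does not help either.

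The missing ingredient is this: for a typical candidate edge $uv$, the partners of $v$ (which lie in $N^L(v)$) have right multiplicity $m_S^Rq^{-(p+1)}$ up to logs. Symmetrically, the partners of $u$ have left multiplicity $m_S^Lq^{-(p+1)}$. The paper obtains this from the first part of Lemma~\ref{lem:expected-number-of-tau-faulty-right-neighbourhood}. It builds this property, together with minimum-degree conditions that allow conditioning on $u$, into the edge family $\cE$ (properties G4--G6 of Lemma~\ref{lem:good-candidate-pairs}).

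\emph{Your target $|W|q^{-2}$ for the exceptional parts is the wrong one.} There is no multiplicity control on an exceptional part, so you cannot show it shrinks under later neighbourhoods. Every exceptional part created at any step must therefore already be at most the final target $|Z|q^{-|I|-|J|}$. With this target the exponent checks do not reduce to $|S|\ge q^{s+r-3}$ alone. For instance, when $u$ joins $I$ and $v$ joins $J$, the second term of Lemma~\ref{lem:expected-number-of-tau-faulty-right-neighbourhood} requires $2|I|+2|J|-|I\cap P_p|-|J\cap P_p|+2p\le 2r-2$. This holds only because of the gap condition $\max I+1<\min J$ in the definition of relevant pairs; it fails when $I\cup J=[r]$. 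Your proposal never uses that condition, so the exceptional-set bookkeeping has not actually been carried out.

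Two minor points. Invariant~4 cannot be guaranteed for the full common neighbourhood $U_p$, so you need a pruned candidate subset. Also, the middle vertex has $|U_k|(\log q)^{-O(1)}$ choices, with no extra power of $q$.
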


Let us briefly discuss the statement of Lemma~\ref{lem:key-technical-lemma}. By Corollary~\ref{cor:regularised-subset}, it suffices to prove a balanced supersaturation statement for sets satisfying the multiplicity assumptions above. The sets $T_1$ and $T_2$ serve two purposes. Given a set $Y$ satisfying these assumptions, the condition~\eqref{eq:neighbourhood-bounds} with $Z=Y$ will provide the bounds needed to control all codegrees except the degrees of individual vertices. These will instead be controlled by a maximality argument. Roughly speaking, if a maximal collection of cliques satisfying the desired codegree bounds contains too few edges, then the set $Y'\subseteq Y$ of vertices of small degree is large. Applying Lemma~\ref{lem:key-technical-lemma} with $S=Y'$ and $T_1=T_2=Y$ then produces many further cliques supported on $Y'$, whose addition preserves the required codegree bounds by~\eqref{eq:neighbourhood-bounds}, contradicting maximality. Finally, allowing two possibly different sets $T_1$ and $T_2$ will be useful when deducing the case of even $\ell$ from the case of odd $\ell$. To understand the proof of Lemma~\ref{lem:key-technical-lemma}, we encourage the reader to think of the case $T_1=T_2=S$.

As mentioned in Section~\ref{sec:overview}, the strategy for proving Lemma~\ref{lem:key-technical-lemma} will be to construct ordered $r$-cliques from the ends towards the middle. More precisely, we pick $r-1$ vertices of the clique in pairs, with each newly chosen pair occupying the leftmost and rightmost available positions. This is done by induction in Lemma~\ref{lem:inductive-building}. In Lemma~\ref{lem:middle-vertex}, we choose the final vertex in the middle position.

Throughout the rest of this section, we fix $r, S, T_1, T_2$ satisfying the assumptions of Lemma~\ref{lem:key-technical-lemma}. Additionally, we choose constants $K_0, \dots, K_{(r+1)/2}$ and $C$ such that $\ell \ll K_0 \ll K_1 \ll K_2 \ll \dots \ll K_{(r+1)/2} \ll C$.

For $j \in [0, (r-1)/2],$ let $P_j = [j] \cup [r-j+1, r]$. Note that $P_0 = \emptyset$ and $P_{j+1}$ is obtained from $P_j$ by adding the smallest and largest element of $[r] \setminus P_j$. The set $P_j$ will represent the set of positions occupied by a $2j$-uniform hyperedge constructed after the first $j$ rounds.

Next, we make several crucial definitions encoding the inductive requirements for the hyperedges.
\begin{defn}[$(j; I, J, Z)$-nice] \label{def:nice}
    Let $e = \{v_i \mid i \in P_j\} \subseteq S$ be a $P_j$-labeled set of size $2j$. For a relevant pair $(I, J)$ and $Z \in \{T_1, T_2\}$, we say that $e$ is \emph{$(j; I, J, Z)$-nice} if
    \begin{equation} \label{eq:nice-small-Z}
        |N^R(e_{I \cap P_j}) \cap N^L(e_{J \cap P_j}) \cap Z| \le |Z| q^{-|I \cap P_j| - |J \cap P_j|} (\log q)^{K_j},
    \end{equation}
    and moreover, if $I \cup J \not\subseteq P_j$, then there is a partition $B \cup F = N^R(e_{I \cap P_j}) \cap N^L(e_{J \cap P_j}) \cap Z$, where $B = B(e, I, J, Z)$ and $F = F(e, I, J, Z)$, satisfying
    \begin{enumerate}[label=P\arabic*)]
        \item \label{item:partition-few-faulty} $|F| \le |Z| q^{-|I| - |J|} (\log q)^{K_j}$;
        \item \label{item:partition-left-multiplicity} if $J \not\subseteq P_j$, then $m_B^L \le m_Z^L q^{-|I \cap P_j|} (\log q)^{K_j}$;
        \item \label{item:partition-right-multiplicity} if $I \not\subseteq P_j$, then $m_B^R \le m_Z^R q^{-|J \cap P_j|} (\log q)^{K_j}$.
    \end{enumerate}
\end{defn}
We give some remarks about Definition~\ref{def:nice}. The condition~\eqref{eq:nice-small-Z} corresponds to the desired conclusion given in~\eqref{eq:neighbourhood-bounds}. Indeed, the right-hand side of~\eqref{eq:nice-small-Z} is, up to the polylogarithmic factor, the heuristic size of the common neighbourhood on the left, where $e_{I \cap P_j}, e_{J \cap P_j}$ are the vertices appearing in~\eqref{eq:neighbourhood-bounds} for the pair $(I, J)$ that have been chosen after the first $j$ rounds. If $I \cup J \subseteq P_j$, then~\eqref{eq:neighbourhood-bounds} has been established after the first $j$ rounds. 

Otherwise, to continue making progress, we would like to control the left and right multiplicities of the common neighbourhood $N^R(e_{I \cap P_j}) \cap N^L(e_{J \cap P_j}) \cap Z$. We cannot do this for the entire common neighbourhood, so we partition it into a bounded part $B$ and a faulty part $F$. Property~\ref{item:partition-few-faulty} will ensure that the size of the faulty part is insignificant compared to the desired size of the final neighbourhood $N^R(e_I) \cap N^L(e_J) \cap Z$, so we may safely ignore it. If we wish to control the intersection of $B$ with the left neighbourhood of a typical vertex $v$, we wish to have a bound on the left multiplicity of $B$, $m_B^L$. However, if $J \subseteq P_j$, we will not consider such an intersection in future steps, so we do not require any bound on $m_B^L$, which explains condition~\ref{item:partition-left-multiplicity}. The condition~\ref{item:partition-right-multiplicity} is, of course, symmetric.

\begin{defn}[candidate set, beautiful set]
    Let $e = \{v_i \mid i \in P_j\} \subseteq S$ be a $P_j$-labeled set of size $2j$. We say a set $U \subseteq S$ is a \emph{candidate set for $e$} if it satisfies the following.
    \begin{enumerate}[label=C\arabic*)]
        \item \label{item:U-in-the-middle} $U \subseteq N^R(e_{[j]}) \cap N^L(e_{\{ r-j+1,  \dots, r\}})$;
        \item \label{item:U-size} $|U| = |S| q^{-2j} (\log q)^{-K_j}$;
        \item \label{item:U-multiplicities} $m_U^L \le m_S^L q^{-j} (\log q)^{K_j}$ and, analogously, $m_U^R \le m_S^R q^{-j} (\log q)^{K_j}.$
    \end{enumerate}    
    We say $e$ is \emph{beautiful} if $e$ is $(j; I, J, Z)$-nice for every relevant pair $(I,J)$ and any $Z \in \{T_1, T_2\}$ and furthermore, there exists a candidate set for $e$.
\end{defn}
As the name suggests, a candidate set for a $P_j$-labeled set $e$ is a set from which we will choose subsequent vertices to extend $e$. Property~\ref{item:U-in-the-middle} corresponds to the fact that we will choose future vertices from the common neighbourhood of vertices in $e$ lying between the first $j$ and last $j$ vertices of $e$. Property~\ref{item:U-size} ensures we have enough vertices to choose from, and for convenience, we arbitrarily restrict it to an exact size. Property~\ref{item:U-multiplicities} will be crucial for controlling the desired common neighbourhoods as already discussed.

We shall prove the following lemma by induction on $j$.
\begin{lemma} \label{lem:inductive-building}
    For any $j \in [0, (r-1)/2]$, there exists a $2j$-uniform hypergraph $\calH$ with vertex set $S$, $e(\calH) \ge |S|^{2j} q^{-\binom{2j}{2}} (\log q)^{-K_j}$ such that every hyperedge of $\calH$ is a beautiful set inducing a $2j$-clique in $\Gamma[S]$.
\end{lemma}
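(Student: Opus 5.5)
We argue by induction on $j$. For $j=0$ the hypergraph consists of the single empty edge. It is beautiful: the candidate set is any subset of $S$ of size $|S|(\log q)^{-K_0}$, and \ref{item:U-multiplicities} is inherited from $S$. For niceness with $P_0=\emptyset$ we take $B=Z$ and $F=\emptyset$. Then \eqref{eq:nice-small-Z} is trivial, and \ref{item:partition-left-multiplicity}, \ref{item:partition-right-multiplicity} hold with the trivial bounds $m_Z^L,m_Z^R$.

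For the inductive step we fix a beautiful $P_j$-labeled edge $e$ of the $j$-th hypergraph, together with a candidate set $U$ for $e$. We extend $e$ by ordered edges $(u,v)\in\overrightarrow{E}_\Gamma(U)$, placing $u$ at position $j+1$ and $v$ at position $r-j$. Every such $(u,v)$ gives a $2(j+1)$-clique, since $U$ lies in the right neighbourhood of $e_{[j]}$ and the left neighbourhood of $e_{[r-j+1,r]}$. We need about $|U|^2q^{-1}\approx |S|^2q^{-4j-1}$ extensions per edge, which matches $q^{-\binom{2j+2}{2}+\binom{2j}{2}}|S|^2$. To get them, we first prune $U$ so that $|N^R(u)\cap U|,|N^L(u)\cap U|\le 100|U|/q$; the fibre mixing lemma (Lemma~\ref{lem:fibre-mixing}) together with \ref{item:U-multiplicities}, $m_S^Lm_S^R\le |S|(\log q)^{100}/q$ and $|U|\gtrsim q^{s-1}$ shows this removes few vertices. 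We then apply Lemma~\ref{lem: large-right/left-degree} to obtain $|U|^2q^{-1}(\log q)^{-O(1)}$ ordered edges inside $U$. This needs $|U|\ge q^{s-1}(\log q)^{10}$, which holds because $|S|\ge q^{s+r-3}(\log q)^C$ and $2j\le r-1$.

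Next, for the extended edge $e'$, we must certify niceness for every relevant $(I,J)$ and $Z$, and exhibit a new candidate set. For each pair we set $B'=B\cap N^R(\cdot)\cap N^L(\cdot)$ restricted to the vertices of $B$ whose relevant multiplicity is at most $\tau$, with $\tau$ equal to the target bound of \ref{item:partition-left-multiplicity} or \ref{item:partition-right-multiplicity} at level $j+1$. We take $F'$ to be $F$ plus the faulty vertices of $B$. There are three cases.
\begin{itemize}
\item If $j+1\in I$ only, the new common neighbourhood is $B\cap N^R(u)$ and we use the first part of Lemma~\ref{lem:expected-number-of-tau-faulty-right-neighbourhood} with $W=B$, averaged over $u$.
\item If $r-j\in J$ only, we argue symmetrically.
\item If both new vertices are added, we use the second part of that lemma, averaged over the edges $(u,v)$.
\end{itemize}
The size bound \eqref{eq:nice-small-Z} comes from Lemma~\ref{lem:fibre-mixing} applied to $B$ against $U$, using $m_B^L$ from \ref{item:partition-left-multiplicity} and $m_U^R$ from \ref{item:U-multiplicities}. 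The pseudorandom error term is dominated once $|L|\cdot|R|$ of the relevant projections exceeds $q^s$, which is the count from the overview and uses $|I|,|J|\le (r+1)/2$ together with $\max I+1<\min J$. The new candidate set is $U'=U\cap N^R(u)\cap N^L(v)$, trimmed to the exact size. Its multiplicity bounds are again proved by splitting off high-multiplicity vertices via Lemma~\ref{lem:expected-number-of-tau-faulty-right-neighbourhood} with $W=U$. Its lower size bound needs a second application of local density (Lemma~\ref{lem: large-right/left-degree}) inside $N^R(u)\cap U$, so we restrict to those $u$ and $v$ for which this is large.

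Finally we apply Markov's inequality to each of the $O_\ell(1)$ expectation bounds, summed over the pairs and over $Z$. All but a $(\log q)^{-1}$ fraction of the good ordered edges $(u,v)$ then satisfy every requirement with the polylog constant raised from $K_j$ to $K_{j+1}$. Summing over $e$ gives the edge count. The main obstacle is the third case together with the faulty-part bound \ref{item:partition-few-faulty}: we must check that $|U||W|m_U^Lq^{s-4}/\tau+|U|(m_U^Lm_U^R|W|)^{1/2}q^{s-5/2}$, divided by the number $|U|^2/q$ of edges, is at most $|Z|q^{-|I|-|J|}$. This is exactly where the hypothesis $|S|\ge q^{s+r-3}$ and the coupling $m_S^Lm_Z^R\le |Z|(\log q)^{100}/q$ are spent. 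I would check first the extremal case $j=(r-3)/2$, where $|I|$ or $|J|$ reaches $(r+1)/2$ only at the final step.
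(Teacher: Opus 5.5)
Your outline follows the paper's architecture: induction from the empty edge, pruning $U$ to bounded degree, local density applied twice to get $|U|^2q^{-1}(\log q)^{-O(1)}$ ordered edges with a large new candidate set, cases via Lemmas~\ref{lem:fibre-mixing} and~\ref{lem:expected-number-of-tau-faulty-right-neighbourhood}, then Markov and a union bound. However, your case analysis omits the case where both new positions $p_1=j+1$ and $p_2=r-j$ lie in $I$ (or, symmetrically, both lie in $J$), and nothing you list handles it.

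In that case $J\subseteq P_j$, so no new faulty part arises. The second part of Lemma~\ref{lem:expected-number-of-tau-faulty-right-neighbourhood}, which concerns $F_u\cap N^L(v)$, is therefore not the relevant tool. What you need is the size bound $|B\cap N^R(u)\cap N^R(v)|\le |Z|q^{-|I\cap P_j|-|J\cap P_j|-2}(\log q)^{O(1)}$. Take a good $u$, meaning $|B\cap N^R(u)|\le \gamma_1:=|Z|q^{-|I\cap P_j|-|J\cap P_j|-1}(\log q)^{O(1)}$. Then $v$ ranges over a set $V_u\subseteq U'\cap N^R(u)$ of size about $|U'|/q$, and you must apply Lemma~\ref{lem:fibre-mixing} to $\overrightarrow{E}_\Gamma(V_u,B\cap N^R(u))$. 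That bound involves $m^R_B\le m^R_Zq^{-|J\cap P_j|}(\log q)^{K_j}$, which does not improve under intersection with $N^R(u)$, together with $m^L_{V_u}$.

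Suppose all you know is $m^L_{V_u}\le m^L_U\le m^L_Sq^{-j}(\log q)^{K_j}$ from~\ref{item:U-multiplicities}. Then, for $|S|$ near $q^{s+r-3}(\log q)^C$, the error term is negligible only when $j+|I\cap P_j|\le r-4$. This fails at $j=(r-3)/2$ with $|I|=(r+1)/2$. For example, with $r=5$, $j=1$, $I=\{1,2,4\}$, $J=\emptyset$, you get $|Z|q^{-5/2}$ instead of the required $|Z|q^{-3}$. Since $(r+1)/2=3\notin I$, \eqref{eq:neighbourhood-bounds} allows no $q^{1/2}$ slack here, so the extension is not $(j+1;I,J,Z)$-nice.

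The missing idea is the paper's conditions \ref{item:good-edge-left-multiplicity} and \ref{item:good-edge-right-multiplicity}: restrict the pool of extension edges \emph{before} extending. Discard every ordered edge $uv$ for which the left coordinate of $v$ has multiplicity above $m^L_Sq^{-(j+1)}(\log q)^{K_{j+1}/8}$ in $U'\cap N^R(u)$. Do the same, symmetrically, for the right coordinate of $u$ in $U'\cap N^L(v)$. By the first part of Lemma~\ref{lem:expected-number-of-tau-faulty-right-neighbourhood} with both sets equal to $U'$, at most $16q^{s-1}|U'|(\log q)^{K_j-K_{j+1}/8}$ edges are discarded. This is negligible against $e_\Gamma(U')\ge|U'|^2/(q(\log q)^4)$, since $|U'|\ge q^s(\log q)^{C/2}$.

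After this restriction, $m^L_{V_u}$ gains an extra factor $q^{-1}$. The requirement becomes $j+|I\cap P_j|\le r-3$, which holds because $|I\cap P_j|\le|I|-2\le(r-3)/2$. The paper also imposes the minimum-degree condition \ref{item:cE-min-degree}, so that $|V_u|\gtrsim|U'|/q$ and one can average over $v$ conditionally on $u$.

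Your multiplicity control of the new candidate set $U\cap N^R(u)\cap N^L(v)$ does not supply this, since it concerns a different set. (A minor slip: the size condition for the double local-density step comes from $2j\le r-3$, not $2j\le r-1$.)
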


In order to prove Lemma~\ref{lem:inductive-building}, we show that any beautiful edge of size $2j$ has many extensions. This will be done in two steps. The first step, given by Lemma~\ref{lem:good-candidate-pairs}, will be to prune the candidate set of a beautiful edge and construct many possible extensions with certain nice properties. The second step, given by Lemma~\ref{lem:prob-not-nice}, is to show that most of these extensions lead to a beautiful edge of size $2(j+1)$.

\subsection{Ensuring new candidate sets}
We start with a preparatory lemma. Given a beautiful set $e$ of size $2j \le r-3$, we prune its candidate set $U$ and construct $|U|^2 / q^{1 + o(1)}$ pairs $uv \in \overrightarrow{E}_\Gamma(U)$ from which we will choose the extensions of $e$.

\begin{lemma} \label{lem:good-candidate-pairs}
    Let $e$ be a beautiful $P_j$-labeled set, where $j \in [0, (r-3)/2]$, and let $U$ be a candidate set for $e$. Then, there exists a subset $U' \subseteq U$ and a set of ordered edges $\cE \subseteq \overrightarrow{E}_{\Gamma}(U')$ satisfying the following:
    \begin{enumerate}[label=G\arabic*)]
        \item \label{item:U'size} $|U'| \ge |U|/2$.
        \item \label{item:good-edge-degree} For any $v \in U'$, $|U' \cap N^R(v)|, |U' \cap N^L(v)| \le 32 |U'|/q$.
        \item \label{item:cE-large} $|\cE| \ge |U'|^2 / (q (\log q)^{100})$.
        \item \label{item:cE-min-degree} For any $uv \in \cE$, $|\{ u' \mid u'v \in \cE \}|,  |\{ v' \mid uv' \in \cE\}| \ge |U'| / (q (\log q)^{200})$.
        \item \label{item:good-edge-left-multiplicity} For any $uv \in \cE,$ writing $v = (v_1, v_2)$, it holds that $m^L_{U' \cap N^R(u)}(v_1) \le m^L_S (\log q)^{K_{j+1}/8} q^{-(j+1)}$;
        \item \label{item:good-edge-right-multiplicity} For any $uv \in \cE,$ writing $u = (u_1, u_2)$, it holds that $m^R_{U' \cap N^L(v)}(u_2) \le m^R_S (\log q)^{K_{j+1}/8} q^{-(j+1)}.$
        \item \label{item:good-edge-exists-candidate-set} For any $uv \in \cE,$ there exists a candidate set for $e \cup \{u, v\}$ contained in $U'$.
    \end{enumerate}
\end{lemma}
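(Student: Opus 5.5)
The plan is to build $U'$ and $\cE$ in stages, first pruning vertices and then pruning edges. Properties \ref{item:good-edge-left-multiplicity}--\ref{item:good-edge-exists-candidate-set} are hereditary under deleting edges (for fixed $U'$), so they can be imposed first and the min-degree property \ref{item:cE-min-degree} last. Throughout, the key size fact is $|U| = |S|q^{-2j}(\log q)^{-K_j} \ge q^{s+r-3-2j}(\log q)^{C-K_j} \ge q^{s}(\log q)^{C/2}$, using $2j \le r-3$. The multiplicity fact is that, by \ref{item:U-multiplicities} and the hypothesis on $m_S^Lm_S^R$, we have $m_U^Lm_U^R \le |S|q^{-1-2j}(\log q)^{100+2K_j} \le |U| q^{-1}(\log q)^{100+3K_j}$.

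\emph{Step 1 (vertex pruning).} By Lemma~\ref{lem:fibre-mixing} and the two facts above, $e_\Gamma(U) \le 2|U|^2/q + 2q^{s/2-1}|U|(m_U^Lm_U^R)^{1/2} \le 4|U|^2/q$, since $|U|\ge q^s(\log q)^{C/2}$ beats the polylogarithmic loss. Deleting all vertices of left or right degree at least $16|U|/q$ in $\Gamma[U]$ removes at most $|U|/2$ vertices. This gives $U'$ with \ref{item:U'size} and \ref{item:good-edge-degree}. Lemma~\ref{lem: local-density} then gives $e_\Gamma(U') \ge |U'|^2/(q(\log q)^4)$.

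\emph{Step 2 (multiplicity control, \ref{item:good-edge-left-multiplicity}, \ref{item:good-edge-right-multiplicity}, \ref{item:good-edge-exists-candidate-set}).} Set $\tau = m_S^L q^{-(j+1)}(\log q)^{K_{j+1}/8}$. Since $m_{U'}^L \le m_S^Lq^{-j}(\log q)^{K_j}$, we have $\tau \ge 4m_{U'}^L/q$. Apply Lemma~\ref{lem:expected-number-of-tau-faulty-right-neighbourhood} with $U=W=U'$, and let $F_u$ be as defined there. The first part of that lemma gives
\[
\sum_u |F_u| \le 16q^{s-2}|U'|m_{U'}^L/\tau \le q^{s-1}|U'|(\log q)^{K_j-K_{j+1}/8+4},
\]
which is negligible compared with $|U'|^2/(q(\log q)^{5})$. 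So we may discard every edge $uv$ with $v\in F_u$, and we do the symmetric thing for right multiplicities (with $F'_v$ defined via $N^L(v)$). For \ref{item:good-edge-exists-candidate-set}, the natural candidate set for $e\cup\{u,v\}$ is a subset of
\[
(U'\cap N^R(u)\cap N^L(v))\setminus (F_u\cup F'_v).
\]
Such a set satisfies \ref{item:U-in-the-middle}, because $U \subseteq N^R(e_{[j]})\cap N^L(e_{[r-j+1,r]})$. It also satisfies \ref{item:U-multiplicities} for level $j+1$, because $K_{j+1}/8\le K_{j+1}$. It remains to guarantee size at least $|S|q^{-2j-2}(\log q)^{-K_{j+1}}$, and then trim to exactly this size. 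This requires two things. First, a lower bound on $|U'\cap N^R(u)\cap N^L(v)|$ for many edges. Second, an upper bound on the faulty parts.

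For the first, I would use Lemma~\ref{lem: large-right/left-degree} twice. There are $\ge |U'|/(\log q)^8$ vertices $u$ with $X_u:=U'\cap N^R(u)$ of size $\ge |U'|/(q(\log q)^8)\ge q^{s-1}(\log q)^{10}$, using $2j+1\le r-2$. Inside each such $X_u$, there are $\ge |X_u|/(\log q)^8$ vertices $v$ with $|X_u\cap N^L(v)|\ge |X_u|/(q(\log q)^8)$. This yields $\ge |U'|^2/(q(\log q)^{24})$ edges $uv$ whose common neighbourhood has size $\ge |U'|q^{-2}(\log q)^{-16}$. For the second, the second part of Lemma~\ref{lem:expected-number-of-tau-faulty-right-neighbourhood} applies, thanks to \ref{item:good-edge-degree}. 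It bounds $\sum_{uv}|F_u\cap N^L(v)|$ by $(|U'|^2 q^{s-4}m_{U'}^L/\tau + |U'|(m^L_{U'}m^R_{U'}|U'|)^{1/2}q^{s-5/2})(\log q)^2$. Both terms are at most $|U'|^3q^{-3}(\log q)^{-50}$, again because $|U'|\ge q^s(\log q)^{C/3}$ and $K_{j+1}\gg K_j$. By Markov's inequality, all but a negligible fraction of the good edges have faulty parts of size at most a $(\log q)^{-20}$ fraction of their common neighbourhood. The same holds symmetrically for $F'_v$. Keeping only these edges gives a set $\cE_0$ with $|\cE_0|\ge |U'|^2/(q(\log q)^{30})$ satisfying \ref{item:good-edge-left-multiplicity}--\ref{item:good-edge-exists-candidate-set}.

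\emph{Step 3 (min-degree pruning).} Repeatedly delete from $\cE_0$ any edge $uv$ such that $u$ has fewer than $|U'|/(q(\log q)^{200})$ out-edges, or $v$ has fewer than that many in-edges. This removes at most $2|U'|\cdot|U'|/(q(\log q)^{200})$ edges. So the final $\cE$ still satisfies \ref{item:cE-large} and \ref{item:cE-min-degree}, and the hereditary properties \ref{item:good-edge-left-multiplicity}--\ref{item:good-edge-exists-candidate-set} persist.

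I expect the main obstacle to be Step 2's treatment of \ref{item:good-edge-exists-candidate-set}. It needs a simultaneous lower bound on the two-sided common neighbourhood for many edges, together with control of the exceptional sets $F_u\cap N^L(v)$. Getting the exponents to close requires exactly $2j\le r-3$ and careful bookkeeping of the powers of $\log q$ against the gaps $K_j\ll K_{j+1}$.
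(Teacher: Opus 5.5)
Your proposal is correct and follows essentially the paper's proof. The steps match: degree-pruning to get $U'$, Lemma~\ref{lem: local-density} for $e_\Gamma(U')$, two applications of Lemma~\ref{lem: large-right/left-degree} to get many edges with large $U'\cap N^R(u)\cap N^L(v)$, the first part of Lemma~\ref{lem:expected-number-of-tau-faulty-right-neighbourhood} for G5--G6, its second part plus Markov for the faulty sets, and a final min-degree pruning. The only difference is that you count edges rather than sampling a random ordered edge, and you discard $F_u\cup F'_v$ where the paper discards high-multiplicity points of $N_{uv}$; those points are contained in $F_u\cap N^L(v)$, so the paper's bound is the same one.

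There is one numerical slip. Your stated bound $|U'|^3q^{-3}(\log q)^{-50}$ on each faulty sum is too weak for your Markov step. With $|N_{uv}|\ge |U'|q^{-2}(\log q)^{-16}$ and a $(\log q)^{-20}$ threshold, it only rules out about $|U'|^2q^{-1}(\log q)^{-14}$ edges. That is more than the $|U'|^2/(q(\log q)^{24})$ good edges you have. The fix is already in your argument: both terms are really at most
\[
|U'|^2q^{s-3}(\log q)^{O(K_j)}\le |U'|^3q^{-3}(\log q)^{-C/4},
\]
using $|U'|\ge q^s(\log q)^{C/3}$ and $C\gg K_{j+1}$, and with this bound the Markov step goes through.
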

\begin{proof}
    Since $U$ is a candidate set for $e$, we have
    \begin{equation} \label{eq:size-of-U}   
        |U| = |S| q^{-2j} (\log q)^{-K_j} \ge q^s (\log q)^{C/2},
    \end{equation}
    where we used that $2j \le r-3$, $|S| \ge q^{s+r-3} (\log q)^C$ and that $C$ is a sufficiently large constant.
    By Lemma~\ref{lem:fibre-mixing}, we have
    \[ 
        e_\Gamma(U) \le \frac{2}{q} |U|^2 + 2 q^{s/2-1} \sqrt{|U| m_U^L} \sqrt{|U| m_U^R}.
    \]
    We claim that $q^{s/2-1} \sqrt{|U| m_U^L} \sqrt{|U| m_U^R} = o(|U|^2/q)$, or equivalently, $q^s m_U^L m_U^R = o(|U|^2)$. Note that 
    \[ q^s m_U^L m_U^R \le q^s (m_S^L q^{-j} (\log q)^{K_j}) (m_S^R q^{-j} (\log q)^{K_j}) \le |S| q^{s-2j-1 + o(1)}. \]
    On the other hand, $|U|^2 = |S|^2 q^{-4j + o(1)}$. Since $2j \le r  -3$ and $|S| \ge q^{s + r - 3 + o(1)}$, it follows that 
    \[ q^s m_U^L m_U^R \le |S| q^{s-2j-1 + o(1)} = o(|U|^2), \]
    as claimed. Hence,
    \begin{equation} \label{eq:e-gamma-upper-bound}
        e_\Gamma(U) \le 4 |U|^2 / q.
    \end{equation}

    Let $U' = \{ u \in U \mid |N_\Gamma(u) \cap U| \le 16|U| / q \}$, so clearly~\ref{item:good-edge-degree} is satisfied. Note that $|U'| \ge |U| - 2e_\Gamma(U) / (16|U| / q) \ge |U| / 2$ by \eqref{eq:e-gamma-upper-bound}, giving~\ref{item:U'size}. By Lemma~\ref{lem: local-density}, we have
    \begin{equation} \label{eq:U'-many-edges}
         e_\Gamma(U') \ge |U'|^2 / (q (\log q)^{4}).
    \end{equation}

    Let $(u, v)$ be a uniformly random ordered edge in $\overrightarrow{E}_{\Gamma}(U')$ and denote $N_{uv} = N^R(u) \cap N^L(v) \cap U'$. We will show that $(u, v)$ satisfies~\ref{item:good-edge-left-multiplicity}--\ref{item:good-edge-exists-candidate-set} with probability at least $(\log q)^{-30}$. Applying Lemma~\ref{lem: large-right/left-degree} twice, we have that 
    \[ \Pr[|N_{uv}| \ge |U'| / (q^2 (\log q)^{16})] \ge (\log q)^{-25}, \]
    where we used that $|U'| / (q (\log q)^8) \ge q^{s-1} (\log q)^{10}.$
    
    For $u \in U'$, denote $W_u = N^R(u) \cap U'$ and note that $|W_u| \le 32 |U| / q$. Let $\tau = m_S^L q^{-(j+1)} (\log q)^{K_{j+1}/8}$ and note that $\tau / m_U^L \ge (\log q)^{K_{j+1}/16} / q \ge 4/q$. Thus, applying Lemma~\ref{lem:expected-number-of-tau-faulty-right-neighbourhood} with both input sets equal to $U'$ and $\tau$, we obtain that the number of ordered edges $(u, (v_1, v_2)) \in \overrightarrow{E}_\Gamma(U')$ such that $m_{W_u}^L(v_1) \ge \tau$ is at most 
    \[ 16 q^{s-2} |U'| m_U^L / \tau \le 16 q^{s-1} |U'| (\log q)^{K_j -K_{j+1}/8} < |U'|^2 / (q(\log q)^{C/4}), \]
    where we used~\eqref{eq:size-of-U} and that $C$ is large compared to $K_{j+1}$. Hence, recalling~\eqref{eq:U'-many-edges}, we see that $(u, v)$ satisfies~\ref{item:good-edge-left-multiplicity} with probability at least
    \[ 1 -\frac{|U'|^2 / (q(\log q)^{C/4})}{e_\Gamma(U')} \ge 1 - (\log q)^{-30}, \] and completely symmetrically, $(u, v)$ satisfies~\ref{item:good-edge-right-multiplicity} with probability at least $1 - (\log q)^{-30}$.
    
    Let the random variable $X$ denote the number of vertices $(x, y) \in N_{uv}$ such that $m_{N_{uv}}^L(x) > \tau.$ Applying Lemma~\ref{lem:expected-number-of-tau-faulty-right-neighbourhood} with both input sets equal to $U'$, we get
    \begin{align*}
        \E[X] &\le e_\Gamma(U')^{-1} \cdot \left( |U'|^2 m_{U'}^L q^{s-4} / \tau + |U'|^{3/2} (m_{U'}^L m_{U'}^R)^{1/2} q^{s-5/2} \right) \cdot (\log q)^2
    \end{align*}
    We want to show that $\E[X] \le |S| q^{-2(j+1)} (\log q)^{-C/2}.$ Recall that $e_\Gamma(U') \ge |U'|^2 / (q (\log q)^4)$ and note that $m_{U'}^L \le m_U^L \le m_S^L q^{-j} (\log q)^{K_j}$. Thus, the first term satisfies
    \[ e_\Gamma(U')^{-1} \cdot \left( |U'|^2 m_U^L q^{s-4} / \tau\right) \cdot (\log q)^2 \le q^{s-2} (\log q)^{-K_{j+1}/16}. \]

    The second term satisfies
    \begin{align*} 
        e_\Gamma(U')^{-1} &\cdot \left( |U'|^{3/2} (m_{U'}^L m_{U'}^R)^{1/2} q^{s-5/2} \right) \cdot (\log q)^2 \le |U'|^{-1/2} (m_S^L m_S^R)^{1/2} q^{s-j-3/2} (\log q)^{6 + K_j}\\
        &\le (|S| q^{-2j} (\log q)^{-K_j}/2)^{-1/2} (|S| (\log q)^{100} / q)^{1/2} q^{s-j-3/2} (\log q)^{6 + K_j} \\
        &\le q^{s-2} (\log q)^{K_{j+1}/6},
    \end{align*}
    where in the last two inequalities we chose $K_{j+1}$ to be sufficiently large compared to $K_j$. Since $|S| \ge q^{s + r - 3} (\log q)^C$ for a large constant $C$ and $2j \le r-3$, we have
    \[ \E[X] \le |S| q^{-2(j+1)} (\log q)^{-C/2}, \]
    as desired. Completely symmetrically, let $Y$ denote the number of vertices $(x, y) \in N_{uv}$ such that $m_{N_{uv}}^R(y) > m_S^R q^{-(j+1)} (\log q)^{K_{j+1}/8}$. Then, we have $\E[Y] \le |S| q^{-2(j+1)} (\log q)^{-C/2}.$ 
    
    Taking $C$ sufficiently large, by Markov's inequality and a union bound, we see that with probability at least $(\log q)^{-26}$, it holds that $X, Y \le |S| q^{-2(j+1)} (\log q)^{-C/4}$, $|N_{uv}| \ge |U'| / (q^2 (\log q)^{16})$ and $(u,v)$ satisfy~\ref{item:good-edge-left-multiplicity}~and~\ref{item:good-edge-right-multiplicity}. In such a case, by removing from $N_{uv}$ all pairs $(x, y) \in N_{uv}$ for which $m_{N_{uv}}^L(x) > m_S^L q^{-(j+1)} (\log q)^{K_{j+1}/8}$ or $m_{N_{uv}}^R(y) > m_S^R q^{-(j+1)} (\log q)^{K_{j+1}/8}$ and then taking an arbitrary subset of size exactly $|S| q^{-2(j+1)} (\log q)^{-K_{j+1}},$ we obtain a candidate set for $e \cup \{u, v\}$. 
    
    We showed there exists a family $\cE$ satisfying \ref{item:good-edge-left-multiplicity}--\ref{item:good-edge-exists-candidate-set} of size $|\cE| \ge |\overrightarrow{E}_\Gamma(U')| (\log q)^{-26} > |U'|^2 / (q (\log q)^{30})$. Now, iteratively remove from $\cE$ all pairs $uv$ not satisfying~\ref{item:cE-min-degree} with respect to the current set $\cE$. Doing so, we have removed at most
    \[ 2 |U'| \cdot (|U'| / (q (\log q)^{200})) \le |\cE|/2 \] pairs in $\cE$, so the resulting set satisfies \ref{item:cE-large}--\ref{item:good-edge-exists-candidate-set}, as required.
\end{proof}

\subsection{Bounding the number of not nice extensions}
The following lemma shows that for a beautiful set $e$, a relevant pair $(I, J)$ and $Z \in \{T_1, T_2\}$, most of the ordered edges given by Lemma~\ref{lem:good-candidate-pairs} can be added to $e$ to give a $(j+1; I, J, Z)$-nice set.

\begin{lemma} \label{lem:prob-not-nice}
    Let $j \in [0, (r-3) / 2]$ and let $e$ be a beautiful $P_j$-labeled set with a candidate set $U$ and let $U' \subseteq U$ and $\cE \subseteq \overrightarrow{E_\Gamma}(U')$ satisfy~\ref{item:U'size}--\ref{item:good-edge-exists-candidate-set}. Let $(I, J)$ be a relevant pair and $Z \in \{T_1, T_2\}$. Let $uv \in \cE$ be chosen uniformly at random and let $e' = e \cup \{u,v\}$. Viewing $e'$ as a $P_{j+1}$-labeled set, the probability that $e'$ is not $(j+1; I, J, Z)$-nice is $o(1)$.
\end{lemma}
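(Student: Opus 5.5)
The plan is a case analysis on how the two new vertices $u$ (position $j+1$) and $v$ (position $r-j$) enter the pair $(I,J)$. The guiding idea is to reuse the partition $B\cup F$ of $W:=N^R(e_{I\cap P_j})\cap N^L(e_{J\cap P_j})\cap Z$ that comes with the $(j;I,J,Z)$-niceness of $e$. Since $I<J$ and relevant pairs satisfy $\max I+1<\min J$, the possibilities are:
\begin{itemize}
\item neither new position lies in $I\cup J$;
\item exactly one of them does, namely $j+1\in I$, $j+1\in J$, $r-j\in I$ or $r-j\in J$;
\item $j+1\in I$ and $r-j\in J$.
\end{itemize}
The last case is the only way both can enter, since $u<v$ rules out $u\in J,\ v\in I$. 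Because $j\le (r-3)/2$, we have $j+1\le (r-1)/2$, so no exceptional $q^{1/2}$ case of~\eqref{eq:neighbourhood-bounds} arises at this stage.

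If neither position is in $I\cup J$, then $W$ is unchanged, and I will keep the same $B,F$. Conditions \ref{item:partition-few-faulty}--\ref{item:partition-right-multiplicity} only get easier, since $K_{j+1}>K_j$ and $I\cap P_{j+1}=I\cap P_j$.

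\textbf{One new vertex.} Take $j+1\in I$; the other cases are symmetric. The new set is $W_u=N^R(u)\cap W\subseteq (N^R(u)\cap B)\cup F$. Since $I\not\subseteq P_j$, \ref{item:partition-right-multiplicity} bounds $m_B^R$. Lemma~\ref{lem:fibre-mixing} gives
\[
\sum_{u\in U'}|N^R(u)\cap B|\le 2|U'||B|/q+2q^{s/2-1}(m_{U'}^Lm_B^R|U'||B|)^{1/2}.
\]
I will bound the second term by inserting $m_{U'}^L\le m_S^Lq^{-j}(\log q)^{K_j}$, $m_B^R\le m_Z^Rq^{-|J\cap P_j|}(\log q)^{K_j}$, the hypothesis $m_S^Lm_Z^R\le|Z|(\log q)^{100}/q$, and the size estimates $|U'|\ge |S|q^{-2j}(\log q)^{-K_j}/2$, $|B|\le |Z|q^{-|I\cap P_j|-|J\cap P_j|}(\log q)^{K_j}$ with $|S|\ge q^{s+r-3}(\log q)^C$. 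This should show that the average of $|N^R(u)\cap B|$ is $|Z|q^{-|I\cap P_{j+1}|-|J\cap P_{j+1}|}$ up to $(\log q)^{O(K_j)}$. The exponent bookkeeping uses $|I|,|J|\le (r+1)/2$ and $2j\le r-3$; this is exactly the heuristic $|L(W)||R(U_i)|\ge q^s$ from Section~\ref{sec:overview}.

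To pass from uniform $u\in U'$ to $u$ drawn as the tail of a uniform edge of $\cE$, I will use \ref{item:good-edge-degree}, \ref{item:cE-large} and \ref{item:cE-min-degree}. Together they show that the law of $u$ has density at most $(\log q)^{O(1)}$ times uniform on $U'$. Markov's inequality then gives~\eqref{eq:nice-small-Z} with probability $1-o(1)$, with room to spare from $(\log q)^{K_{j+1}}$.

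If $I\cup J\not\subseteq P_{j+1}$, we also need a new partition. Set $F'=F\cup\{x\in N^R(u)\cap B: m^L_{N^R(u)\cap B}(x_1)\ge \tau\}$ and $B'=(N^R(u)\cap B)\setminus F'$, with $\tau=m_Z^Lq^{-|I\cap P_{j+1}|}(\log q)^{K_{j+1}/2}$; this is only needed when $J\not\subseteq P_{j+1}$. Then:
\begin{itemize}
\item The right multiplicity of $B'$ is at most $m_B^R$, which suffices for \ref{item:partition-right-multiplicity}, as $J\cap P_j$ is unchanged.
\item The left multiplicity is at most $\tau$ by construction.
\item The old $F$ already satisfies \ref{item:partition-few-faulty}, since that bound uses the full $|I|+|J|$.
\item The expected number of new faulty vertices is bounded by the first part of Lemma~\ref{lem:expected-number-of-tau-faulty-right-neighbourhood}, applied with $U'$ and $B$. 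It gives $\sum_u|F_u|\le 16q^{s-2}|B|m_{U'}^L/\tau$, and the same reweighting plus Markov argument shows this is below $|Z|q^{-|I|-|J|}(\log q)^{K_{j+1}}$ with probability $1-o(1)$.
\end{itemize}

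\textbf{Both new vertices, and the main obstacle.} The case $j+1\in I,\ r-j\in J$ concerns $N^R(u)\cap N^L(v)\cap B$, averaged over edges $uv\in\cE$ rather than over single vertices. For the size, I bound it by $|N^R(u)\cap B|$ conditioned on $u$, which is handled as above. To get the extra factor $q^{-1}$ from $v$, I apply Lemma~\ref{lem:fibre-mixing} to $N^R(u)\cap B$ against $N^R(u)\cap U'$. This requires control of the left multiplicity of $N^R(u)\cap B$, which is why the single-$u$ faulty set from the previous case has to be removed first.

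For the faulty part I will use the second part of Lemma~\ref{lem:expected-number-of-tau-faulty-right-neighbourhood}, whose hypothesis is provided by \ref{item:good-edge-degree}. Converting its sum over all of $\overrightarrow{E}_\Gamma(U')$ to a uniform edge of $\cE$ costs only $(\log q)^{O(1)}$ by \ref{item:cE-large}. The symmetric statement, with left and right swapped, handles the right multiplicities.

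I expect the main obstacle to be verifying that the cross term $|U'|(m^L_{U'}m^R_{U'}|B|)^{1/2}q^{s-5/2}$ divided by $|\cE|$ is at most $|Z|q^{-|I|-|J|}$. This is where $|S|\ge q^{s+r-3}(\log q)^C$ must be used tightly. I would first check the worst configuration $|I\cap P_j|=|J\cap P_j|=j$ and $|I|=|J|=(r+1)/2$, choosing the constants $K_j\ll K_{j+1}\ll C$ to absorb all logarithmic losses. Finally, a union bound over the $O(1)$ conditions gives that $e'$ fails to be $(j+1;I,J,Z)$-nice with probability $o(1)$.
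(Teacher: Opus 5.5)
Your treatment of the cases you do consider tracks the paper's argument closely. This includes good vertices with respect to $|N^R(\cdot)\cap B|$, and the first part of Lemma~\ref{lem:expected-number-of-tau-faulty-right-neighbourhood} when one new position is in $I$. For $j+1\in I$, $r-j\in J$ it also includes removing the left-heavy part of $N^R(u)\cap B$ before mixing against the out-neighbours of $u$, together with the second part of that lemma.

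The genuine gap is your case enumeration. You claim that $j+1\in I$, $r-j\in J$ is the only way both new positions can enter $I\cup J$. But $\{j+1,r-j\}\subseteq I$ is also possible: it only forces $J\subseteq[r-j+2,r]$, e.g.\ $J=\emptyset$. Symmetrically, $\{j+1,r-j\}\subseteq J$ is possible. For instance, $r=3$, $j=0$, $I=\{1,3\}$, $J=\emptyset$ is a relevant pair. There, niceness of $e'$ requires $|W\cap N^R(u)\cap N^R(v)|\le |Z|q^{-|I\cap P_j|-|J\cap P_j|-2}(\log q)^{K_{j+1}}$. Nothing in your proposal produces the second factor $q^{-1}$. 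Your one-vertex argument gives only one such factor. Your two-vertex argument counts arcs $(x,v)$ with $x<v$, whereas here one needs arcs $(v,x)$ with $v<x$, so the roles of left and right multiplicities are swapped.

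This missing case is not a formality. The paper (Case 2) conditions on $u=w$ being good and sets $V_w=\{z: wz\in\cE\}$ and $B_w=B\cap N^R(w)$. It then bounds $|\overrightarrow{E}_\Gamma(V_w,B_w)|$ by Lemma~\ref{lem:fibre-mixing}, which involves $m^R_{B_w}\le m^R_B$ (from \ref{item:partition-right-multiplicity}) and $m^L_{V_w}$. The crucial input is $m^L_{V_w}\le m^L_S q^{-(j+1)}(\log q)^{K_{j+1}/8}$, which is exactly property \ref{item:good-edge-left-multiplicity} of $\cE$; your proposal never uses this property. With it, the spectral term is acceptable provided $j+|I\cap P_j|\le r-3$, which holds since $|I\cap P_j|\le |I|-2\le (r-3)/2$.

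With only the candidate-set bound $m^L_{V_w}\le m^L_{U'}\le m^L_S q^{-j}(\log q)^{K_j}$, you lose a factor $q$ and would need $j+|I\cap P_j|\le r-4$. This already fails for $r=3$, $j=0$, $I=\{1,3\}$ when $|S|$ is close to $q^{s+r-3}(\log q)^C$. The partition requirement in this case is trivial, since $r-j\in I$ forces $J\subseteq P_j$. The mirror case $\{j+1,r-j\}\subseteq J$ uses \ref{item:good-edge-right-multiplicity}.

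A lesser point: the ``worst configuration'' $|I|=|J|=(r+1)/2$ that you plan to check is infeasible. The exponent inequalities you leave unverified depend essentially on the gap condition $\max I+1<\min J$, not just on $|I|,|J|\le (r+1)/2$. These are $j+|I|+|J|-|J\cap P_j|\le r-2$ for the new faulty set in the one-vertex case, and $2|I|+2|J|-|I\cap P_j|-|J\cap P_j|+2j\le 2r-2$ in the mixed case. The gap condition is what gives $|I|+|J|\le r-1$ and $|(I\cup J)\setminus P_{j}|\le r-2j-1$.
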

\begin{proof}
    Let $p_1 = j+1$ and $p_2 = r-j$, and note that these are the indices of the newly added vertices. If $(I \cup J) \cap \{ p_1, p_2 \} = \emptyset,$ the statement is trivial as for any $uv \in \cE$, $e \cup \{u,v\}$ is $(j+1; I, J, Z)$-nice by choosing $K_{j+1} \ge K_j$.

    So we may assume that $(I \cup J) \cap \{ p_1, p_2 \} \neq \emptyset$. By symmetry, we may further assume $I \cap \{p_1, p_2\} \neq \emptyset$. Note that
    \begin{equation} \label{eq:U'-big}
        |U'| \ge |U| / 2 \ge q^{s + r - 3 - 2j} (\log q)^{C/2} \ge q^s (\log q)^{C/2}.
    \end{equation} 
    
    Denote $W = N^R(e_{I \cap P_j}) \cap N^L(e_{J \cap P_j}) \cap Z$. Since $I \cup J \not\subseteq P_j$, there is a partition $B \cup F = W$ satisfying~\ref{item:partition-few-faulty}--\ref{item:partition-right-multiplicity} with respect to $e$. We have that 
    \begin{equation} \label{eq:size-of-B}
        |B| \le |W| \le |Z| q^{-|I \cap P_j| - |J \cap P_j|} (\log q)^{K_j}
    \end{equation}
    and
    \begin{equation} \label{eq:size-of-F}
        |F| \le |Z| q^{-|I| - |J|} (\log q)^{K_j}.
    \end{equation}

    Note that for any $a \in U'$, 
    \[ \Pr[u=a] \le |\cE|^{-1} \cdot |N^R(a) \cap U'| \le |U'|^{-1} (\log q)^{101}. \]
    Completely analogously, the same holds for $v,$ so we have
    \begin{equation} \label{eq:no-vtx-too-frequent}
        \Pr[u=a], \Pr[v=a] \le |U'|^{-1} (\log q)^{101}.
    \end{equation}
    
    Let us show that
    \begin{equation} \label{eq:EU'B}
        |\overrightarrow{E}_\Gamma(U', B)| \le |U'| |Z| q^{-|I \cap P_j| - |J \cap P_j| - 1} (\log q)^{K_{j+1}/4}.
    \end{equation}

    Since $I \not\subseteq P_j$, it holds that $m_B^R \le m_Z^R q^{-|J \cap P_j|} (\log q)^{K_j}$. By Lemma~\ref{lem:fibre-mixing}, we have
    \begin{equation} \label{eq:edges-to-B}
         | \overrightarrow{E}_\Gamma(U', B)| \le 2|U'| |B| / q + 2 q^{s/2-1} (m_{U'}^L m_{B}^R |U'| |B|)^{1/2}.
    \end{equation} 
    We have 
    \[ 2|U'||B| / q \le 2 |U'| |Z| q^{-|I \cap P_j| - |J \cap P_j| -1} (\log q)^{K_j} \le |U'| |Z| q^{-|I \cap P_j| - |J \cap P_j| - 1} (\log q)^{K_{j+1}/8}. \]
    To bound the second term, note that 
    \begin{equation} \label{eq:prod-mBL-mUR}
        m_{U'}^L m_{B}^R \le m_S^L q^{-j} (\log q)^{K_j} \cdot m_Z^R q^{-|J \cap P_j|} (\log q)^{K_j} \le |Z| q^{-|J \cap P_j| - j - 1} (\log q)^{2K_j + 100}.
    \end{equation}
    
    We claim that
    \begin{equation} \label{eq:prepare-spectral-term} 
        2 q^{s/2-1} (m_{U'}^L m_{B}^R |U'| |B|)^{1/2} \le |U'| |Z| q^{-|I \cap P_j| - |J \cap P_j|-1} (\log q)^{K_{j+1}/8}.
    \end{equation}
    Using \eqref{eq:size-of-B}, \eqref{eq:prod-mBL-mUR} and squaring, it is enough to show that
    \[ 4q^{s-j-1 + |I \cap P_j|} \le |U'| (\log q)^{K_{j+1}/4 - 3K_j -100}. \]
    Using that $|U'| \ge |S| q^{-2j} (\log q)^{-K_j}/2, |S| \ge q^{s+r-3} (\log q)^C$ and taking $C$ sufficiently large, it suffices to show that $s-j-1 + |I \cap P_j| \le s+r-3-2j$, or equivalently, $j + |I \cap P_j| \le r-2$. Since $j \le (r-3)/2$ and $|I \cap P_j| \le |I|-1 \le (r-1)/2,$ we have $j + |I \cap P_j| \le r-2,$ establishing~\eqref{eq:prepare-spectral-term}.

    Plugging back into~\eqref{eq:edges-to-B} proves~\eqref{eq:EU'B}. Denote 
    \begin{equation} \label{eq:gamma_1}
        \gamma_1 = |Z| q^{-|I \cap P_j| - |J \cap P_j| - 1} (\log q)^{K_{j+1}/2}.
    \end{equation}
    We call a vertex $x \in U'$ \emph{good} if $|N^R(x) \cap B| \le \gamma_1$. Recalling~\eqref{eq:no-vtx-too-frequent} and~\eqref{eq:EU'B}, we have
    \begin{equation} \label{eq:prob-large-neigh}
        \Pr[u \text{ is not good}], \Pr[v \text{ is not good}] \le \frac{|U'|^{-1} (\log q)^{101} |\overrightarrow{E}_\Gamma(U', B)|}{ \gamma_1} =  o(1),
    \end{equation}
    where we chose $K_{j+1}$ sufficiently large.

    Now, we consider several cases corresponding to different intersections of $\{p_1, p_2\}$ with the sets $I$ and $J$.
    
    \textbf{Case 1:} $|I \cap \{p_1, p_2\}| = 1, J \cap \{p_1, p_2\} = \emptyset.$\\
    If $p_1 \in I$, let $t = u$ and if $p_2 \in I,$ let $t = v$. In other words, $t$ is the new vertex whose position is in $I$.

    By~\eqref{eq:prob-large-neigh}, with probability $1-o(1)$, we have $|N^R(t) \cap B| \le \gamma_1,$ in which case $|N^R(t) \cap W| \le \gamma_1 + |F| \le  |Z| q^{-|I \cap P_{j+1}| - |J \cap P_{j+1}|} (\log q)^{3K_{j+1}/4}.$ Hence, $e'$ satisfies  \eqref{eq:nice-small-Z} with probability $1 - o(1)$.
    
    If $I \cup J \subseteq P_{j+1}$, there is nothing else to prove, so assume this is not the case. If $J \subseteq P_{j+1},$ then letting $B' = B \cap N^R(t)$ and $F' = (W \cap N^R(t)) \setminus B'$, we have $|F'| \le |F| \le |Z| q^{-|I|-|J|} (\log q)^{K_{j+1}}$ and $m_{B'}^R \le m_B^R \le m_Z^R q^{-|J \cap P_j|} (\log q)^{K_j} \le m_Z^R q^{-|J \cap P_{j+1}|} (\log q)^{K_{j+1}}$, implying that $e'$ is nice with probability $1 - o(1)$. 

    So assume that $J \not\subseteq P_{j+1}$. This implies that $p_1 \in I, p_2 \not\in I$ as otherwise $I < J$ cannot hold. Also, by definition, we have $m_B^L \le m_Z^L q^{-|I \cap P_j|} (\log q)^{K_j}.$ Let $B_u = N^R(u) \cap B$, $\tau = m_Z^L q^{-|I \cap P_{j+1}|} (\log q)^{K_{j+1}}$ and
    \[ F_u = \{ (x,y) \in B_u \mid m_{B_u}^L(x) \ge \tau\}. \]
    Setting $B' = B_u \setminus F_u$, we clearly have $m_{B'}^L \le \tau$ and as above, $m_{B'}^R \le m_B^R \le m_Z^R q^{-|J \cap P_{j+1}|} (\log q)^{K_{j+1}}$. Setting $F' = (F \cup F_u) \cap N^R(u)$, the partition $B' \cup F' = N^R(e'_{I \cap P_{j+1}}) \cap N^L(e'_{J \cap P_{j+1}}) \cap Z$, satisfies~\ref{item:partition-left-multiplicity} and~\ref{item:partition-right-multiplicity}. It remains to show that it satisfies~\ref{item:partition-few-faulty} with probability $1-o(1)$. 

    Note that $\tau > 4 m_B^L / q$, so applying Lemma~\ref{lem:expected-number-of-tau-faulty-right-neighbourhood} with $W = B$ and using \eqref{eq:no-vtx-too-frequent}, we have
    \[ \E[ |F_u|] \le (\log q)^{101} |U'|^{-1} \cdot 16 q^{s-2} |B| m_{U'}^L / \tau. \]
    We wish to show that $\E[|F_u|] = o(|Z| q^{-|I|-|J|} (\log q)^{K_{j+1}}).$ Note that $m_{U'}^L \le m_U^L \le m_S^L q^{-j} (\log q)^{K_j} \le m_Z^L q^{-j} (\log q)^{K_j}$, so $m_{U'}^L / \tau \le q^{|I \cap P_{j+1}| - j} (\log q)^{-K_{j+1}/2}$.
    
    Taking $C$ large enough and using~\eqref{eq:U'-big} and \eqref{eq:size-of-B}, it is enough to show 
    \[ q^{-s-r+3+2j} q^{s-2} \cdot |Z| q^{-|I \cap P_j| - |J \cap P_j|} q^{|I \cap P_{j+1}| - j} \le |Z| q^{-|I| - |J|}, \]
    which, using that $|I \cap P_{j+1}| = |I \cap P_j| + 1,$ reduces to showing that
    \[ j + |I| + |J| - |J \cap P_j| \le r-2. \]
    Recalling that $J \not\subseteq P_j, p_1 \in I$ and $\max I + 1 < \min J$, it follows that not all future positions can be covered by $I \cup J$, that is, $|(I \cup J) \setminus P_{j+1}| \le r - |P_{j+1}| - 1 = r - 2j - 3$. On the other hand, since $J \not\subseteq P_{j+1},$ we have $I \cap [r-j+1, r] = \emptyset$, which together with $r-j = p_2 \not\in I \cup J$, implies $|(I \cup J) \cap P_{j+1}| - |J \cap P_j| \le |I \cap P_{j+1}| \le j+1.$ Therefore,
    \[ j + |I| + |J| - |J \cap P_j| = j + |(I \cup J) \setminus P_{j+1}| + |(I \cup J) \cap P_{j+1}| - |J \cap P_j| \le j + (r-2j-3) + (j+1) = r-2, \]
    as needed. We conclude that $\E[|F_u|] = o(|Z| q^{-|I|-|J|} (\log q)^{K_{j+1}})$.

    Since $|F'| \le |F| + |F_u|,$ by Markov's inequality, it follows that the partition $B' \cup F'$ satisfies~\ref{item:partition-few-faulty} with probability $1 - o(1)$, which completes the proof of this case.

    \textbf{Case 2:} $p_1, p_2 \in I$.\\
    Let $U'' = \{ w \in U' \mid w \text{ is good and } \exists z \text{ such that } wz \in \cE\}$. For $w \in U''$, let $B_w = B \cap N^R(w)$ and $V_w = \{ z \in U' \mid wz \in \cE\}$. Note that for $w \in U''$, we have $|U'| / (q (\log q)^{200}) \le |V_w| \le |U' \cap N^R(w)| \le 32 |U'| / q$ by~\ref{item:good-edge-degree} and~\ref{item:cE-min-degree}, and $|B_w| \le \gamma_1$ since $w$ is good.
    
    By~\ref{item:good-edge-left-multiplicity}, we have $m_{V_w}^L \le m_S^L q^{-j-1} (\log q)^{K_{j+1}/8}$. Since $I \not\subseteq P_j$, we have $m_{B_w}^R \le m_B^R \le m_Z^R q^{-|J \cap P_j|} (\log q)^{K_j}.$ Applying Lemma~\ref{lem:fibre-mixing}, we have
    \begin{equation} \label{eq:EVxBx}
        |\overrightarrow{E_\Gamma}(V_w, B_w)| \le 2 |B_w| |V_w| / q + 2q^{s/2-1} (m_{V_w}^L m_{B_w}^R |V_w| |B_w| )^{1/2}.
    \end{equation}
    Let $\gamma = |Z| |U'| q^{-|I \cap P_j| - |J \cap P_j| - 3} (\log q)^{3 K_{j+1}/4}$. We aim to show that $|\overrightarrow{E_\Gamma}(V_w, B_w)| \le \gamma.$ For $w \in U''$, using that $|B_w| \le \gamma_1,$ because $w$ is good, we obtain    
    \begin{equation} \label{eq:VxBx}
        |V_w| |B_w| \le 32|U'|/q \cdot |Z| q^{-|I \cap P_j| - |J \cap P_j| - 1} (\log q)^{K_{j+1}/2} \le |U'| |Z| q^{-|I \cap P_j| - |J \cap P_j| - 2} (\log q)^{K_{j+1}/2+1},
    \end{equation}
    hence, $(2/q) |V_w| |B_w| \le \gamma / 2$.

    To bound the second term in~\eqref{eq:EVxBx}, using that $m_Z^R m_S^L \le |Z| (\log q)^{100} / q,$ we obtain
    \begin{equation} \label{eq:mVxmBx}
        m_{V_w}^L m_{B_w}^R \le m_S^L q^{-j-1} (\log q)^{K_{j+1}/8} \cdot m_Z^R q^{-|J \cap P_j|} (\log q)^{K_j} \le |Z| q^{-|J \cap P_j| - j - 2} (\log q)^{K_{j+1}/4}.
    \end{equation}

    Note that
    \[ 2q^{s/2-1} (m_{V_w}^L m_{B_w}^R |V_w| |B_w| )^{1/2}  \le \gamma / 2 \iff  q^{s-2} m_{V_w}^L m_{B_w}^R |V_w| |B_w| \le \gamma^2 / 16. \]
    Using~\eqref{eq:VxBx}~and~\eqref{eq:mVxmBx} and the definition of $\gamma$, it is enough to show that
    \[ q^{s-j + |I \cap P_j|} \le |U'|. \]
    Using~\eqref{eq:U'-big} and taking $C$ large enough, it suffices to verify that $j + |I \cap P_j| \le r-3$. Since $p_1, p_2 \in I$, we have $|I \cap P_j| \le |I| - 2 \le (r+1)/2 - 2 = (r-3)/2$. Therefore, $j + |I \cap P_j| \le (r-3)/2 + (r-3)/2 = r-3,$ as claimed. We conclude that $|\overrightarrow{E_\Gamma}(V_w, B_w)| \le \gamma.$ Hence, for any $w \in U''$,
    \[ \E[|B \cap N^R(u) \cap N^R(v)| \mid u=w] = \frac{|\overrightarrow{E_\Gamma}(V_w, B_w)| }{|V_w|} \le \gamma \cdot q (\log q)^{200} / |U'| \le |Z| q^{-|I \cap P_j| - |J \cap P_j| - 2} (\log q)^{7 K_{j+1}/8} .\]
    Note that $|I \cap P_{j+1}| + |J \cap P_{j+1}| = |I \cap P_j| + |J \cap P_j| + 2$. Denoting $\alpha = |Z| q^{-|I \cap P_{j+1}| - |J \cap P_{j+1}|} (\log q)^{K_{j+1}} $, by Markov's inequality, for any $w \in U'',$ 
    \[ \Pr[ |B \cap N^R(u) \cap N^R(v)| > \alpha/2 \mid u=w] = o(1). \]

    Since $u$ is good with probability $1 - o(1)$, we have $u \in U''$ with probability $1 - o(1)$ and it follows that $\Pr[ |B \cap N^R(u) \cap N^R(v)| > \alpha/2] = o(1)$. If $|B \cap N^R(u) \cap N^R(v)| \le \alpha/2$, then letting $B' = B \cap N^R(u) \cap N^R(v)$ and $F' = F \cap N^R(u) \cap N^R(v)$ provides the required decomposition of $Z \cap N^R(e'_{I \cap P_{j+1}}) \cap N^L(e'_{J \cap P_{j+1}})$. Indeed,~\eqref{eq:nice-small-Z} holds since $|Z \cap N^R(e'_{I \cap P_{j+1}}) \cap N^L(e'_{J \cap P_{j+1}})| \le |F| + |B'| \le \alpha/2 + |F| \le \alpha$. Furthermore, \ref{item:partition-few-faulty} holds since $|F'| \le |F|$, and~\ref{item:partition-right-multiplicity} follows from $|J \cap P_{j+1}| = |J \cap P_j|$ and ~\ref{item:partition-right-multiplicity} for $e$. Finally, note that $J \subseteq P_{j+1}$ since $p_2 \in I$, so the restriction in ~\ref{item:partition-left-multiplicity} does not apply. This concludes the proof of this case.

    \textbf{Case 3:} $p_1 \in I, p_2 \in J$.\\
    As in the previous case, let $U'' = \{ w \in U' \mid w \text{ is good and } \exists z \text{ such that } wz \in \cE\}$. For $w \in U'$, let $B_w = B \cap N^R(w)$ and $V_w = \{ z \in U' \mid wz \in \cE\}$. Note that for $w \in U''$, we have $|U'| / (q (\log q)^{200}) \le |V_w| \le |U' \cap N^R(w)| \le 32 |U'| / q$ by~\ref{item:good-edge-degree} and~\ref{item:cE-min-degree}, and $|B_w| \le \gamma_1$ since $w$ is good.
    
    Let $\tau = m_Z^L q^{-|I \cap P_j| - 1} (\log q)^{K_{j+1}/2}$ and for $w \in U'$, let $F_w = \{ (x, y) \in B_w \mid m_{B_w}^L(x) \ge \tau \}$ and $G_w = B_w \setminus F_w$. Let $\gamma_2 = |Z| q^{-|I \cap P_{j+1}| - |J \cap P_{j+1}|} (\log q)^{7 K_{j+1}/8}$. 

    Consider an arbitrary $w \in U''$. First we show that
    \begin{equation} \label{eq:prob-u-Gw}
        \Pr[|N^L(v) \cap G_w| \ge \gamma_2 \mid u = w] = o(1).
    \end{equation}
    
    By Lemma~\ref{lem:fibre-mixing}, it follows that
    \begin{equation} \label{eq:E-GwVw}
        |\overrightarrow{E}_\Gamma(G_w, V_w)| \le 2 |G_w| |V_w| / q + 2 q^{s/2-1} (m_{G_w}^L m_{V_w}^R |G_w| |V_w|)^{1/2}.
    \end{equation}
    Since $|G_w| \le |B_w| \le \gamma_1,$ we have
    \begin{equation} \label{eq:GwVw}
        |G_w| |V_w| \le \gamma_1 \cdot 32 |U'| / q \le |U'| |Z| q^{-|I \cap P_j| - |J \cap P_j| - 2} (\log q)^{3 K_{j+1}/4}.
    \end{equation}
    
    Therefore, setting $\gamma_3 = |U'| |Z| q^{-|I \cap P_j| - |J \cap P_j| - 3} (\log q)^{4 K_{j+1}/5}$, the main term satisfies $2 |G_w| |V_w| / q \le \gamma_3$. We claim that the same is true for the other term. To this end, note that $m_{V_w}^R \le m_{U'}^R \le m_S^R q^{-j} (\log q)^{K_j}$, and, by definition, $m_{G_w}^L \le \tau$. Recalling that $m_Z^L m_S^R \le (|Z| / q) \cdot (\log q)^{100}$, it follows that
    \begin{equation} \label{eq:mVwmGw}
        m_{G_w}^L m_{V_w}^R \le m_Z^L q^{-|I \cap P_j| - 1} (\log q)^{K_{j+1}/2} \cdot m_S^R q^{-j} (\log q)^{K_j} \le |Z| q^{-j - |I \cap P_j| - 2} (\log q)^{2K_{j+1}/3}.
    \end{equation}

    We wish to show that $2 q^{s/2-1} (m_{G_w}^L m_{V_w}^R |G_w| |V_w|)^{1/2} \le \gamma_3$. Squaring and using~\eqref{eq:GwVw} and~\eqref{eq:mVwmGw}, it is enough to show that
    \[ 4 q^{s-2} \cdot |Z| q^{-j - |I \cap P_j| - 2} (\log q)^{2K_{j+1}/3} \cdot  |U'| |Z| q^{-|I \cap P_j| - |J \cap P_j| - 2} (\log q)^{3 K_{j+1}/4} \le \gamma_3^2. \]
    Plugging in the definition of $\gamma_3$, it suffices to show that
    \[ q^{s-j+|J \cap P_j|} (\log q)^{K_{j+1}} \le |U'|. \]
    Using~\eqref{eq:U'-big} and taking $C$ large enough, it suffices to show that $s - j + |J \cap P_j| \le s$. Since $p_1 \in I,$ we have $J \cap P_j \subseteq [r-j+1,r]$ so $|J \cap P_j| \le j$, implying the inequality. We conclude that $|\overrightarrow{E}_\Gamma(G_w, V_w)| \le 2\gamma_3.$

    Hence, using Markov's inequality, we have
    \begin{align*} 
        \Pr[|N^L(v) \cap G_w| \ge \gamma_2 \mid u = w] &\le \frac{|\overrightarrow{E}_\Gamma(G_w, V_w)|}{|V_w| \gamma_2} \le \frac{2\gamma_3 q (\log q)^{200}}{\gamma_2 |U'|}\\
        &\le \frac{2|U'| |Z| q^{-|I \cap P_j| - |J \cap P_j| - 3} (\log q)^{4 K_{j+1}/5} \cdot (q (\log q)^{200})}{ |U'||Z| q^{-|I \cap P_{j+1}| - |J \cap P_{j+1}|} (\log q)^{7 K_{j+1}/8}} = o(1),
    \end{align*}
    where in the last inequality we used that $|I \cap P_{j+1}| + |J \cap P_{j+1}|=|I \cap P_j| + |J \cap P_j| + 2 $ and that $K_{j+1}$ is sufficiently large. This establishes~\eqref{eq:prob-u-Gw}.

    Denote $\gamma_F = |Z| q^{-|I| - |J|} (\log q)^{K_{j+1}}/16$. We claim that
    \begin{equation} \label{eq:prob-many-faulty}
        \Pr[|N^L(v) \cap F_u| \ge \gamma_F] = o(1).
    \end{equation} 

    By Markov's inequality,
    \[ \Pr[|N^L(v) \cap F_u| \ge \gamma_F] \le |\cE|^{-1} \gamma_F^{-1} \sum_{wz \in \overrightarrow{E}_\Gamma(U')} |N^L(z) \cap F_{w}|. \]

    Denote $\gamma_4 =|U'|^2 |Z| q^{-|I| - |J| - 1} (\log q)^{3K_{j+1}/4}$. Since $|\cE| \ge |U'|^2 / (q (\log q)^{100})$, we have $|\cE| \gamma_F \ge \gamma_4$, so it is enough to show that
    \begin{equation} \label{eq:needed-sum-faulty}
        \sum_{wz \in \overrightarrow{E}_\Gamma(U')} |N^L(z) \cap F_{w}| = o(\gamma_4).
    \end{equation}

    Since $J \not\subseteq P_j,$ we have $m_B^L \le m_Z^L q^{-|I \cap P_j|} (\log q)^{K_j}$. Note that $\tau \ge 4 m_B^L  / q$ since $K_{j+1}$ is large enough. Furthermore, recall that $|N^R(w) \cap U'| \le 32 |U'| / q$ for every $w \in U'$. Hence, by the second part of Lemma~\ref{lem:expected-number-of-tau-faulty-right-neighbourhood}, applied with $U = U'$ and $W = B$, we have
    \begin{equation} \label{eq:case3-double-mixing}
        \sum_{wz \in \overrightarrow{E}_\Gamma(U')} |N^L(z) \cap F_{w}| \le \left(|B||U'| m_{U'}^L q^{s-4} / \tau + |U'| (m_{U'}^L m_{U'}^R |B|)^{1/2} q^{s-5/2} \right) (\log q)^2
    \end{equation}

    Using $m_{U'}^L \le m_S^L q^{-j} (\log q)^{K_j}, m_S^L \le m_Z^L$,~\eqref{eq:size-of-B} and the definitions of $\tau$ and $\gamma_4$, we have
    \begin{align*}
        |B| |U'| m_{U'}^L q^{s-4} (\log q)^2 \tau^{-1} / \gamma_4 &\le q^{s + |I| + |J| - |J \cap P_j| - j - 2} / |U'|.
    \end{align*}

    To show the above is $o(1)$, using~\eqref{eq:U'-big} and taking $C$ large enough, it suffices to verify that $s + |I| + |J| - |J \cap P_j| - j - 2 \le s+r - 3 - 2j$, or equivalently, that 
    \begin{equation} \label{eq:case3-gaps}
        j + |I| + |J| - |J \cap P_j| \le r-1.
    \end{equation}
    Since $p_1 \in I,$ we have $J \cap P_j \subseteq [r-j+1,r]$ and since $p_2 \in J,$ we have $|I \cap P_j| \le j$. These imply $|(I \cup J) \cap P_j| \le |J \cap P_j| + j$. Furthermore, since $p_1 \in I, p_2 \in J$ and $\max I + 1 < \min J,$ we have $|(I \cup J) \setminus P_j| \le r - 2j - 1$. Hence,
    \[ j + |I| + |J| - |J \cap P_j| = j + |(I \cup J) \cap P_j| + |(I \cup J) \setminus P_j| - |J \cap P_j| \le 2j + r - 2j - 1 = r - 1, \]
    establishing~\eqref{eq:case3-gaps}.

    We turn to the second term in~\eqref{eq:case3-double-mixing}. Note that $m_{U'}^L m_{U'}^R \le m_S^L m_S^R q^{-2j} (\log q)^{2K_j} \le |S| q^{-2j-1} (\log q)^{2K_j + 100}.$ Hence, using ~\eqref{eq:size-of-B} and $|Z| \ge |S|$, we have
    \[ (|U'| (m_{U'}^L m_{U'}^R |B|)^{1/2} q^{s-5/2} (\log q)^2 / \gamma_4)^2 \le q^{2|I| + 2|J| + 2s - |I \cap P_j| - |J \cap P_j| - 2j - 4} / |U'|^2. \]
    To show the above is $o(1)$, using~\eqref{eq:U'-big} and taking $C$ sufficiently large, it suffices to show that
    \[  2|I| + 2|J| + 2s - |I \cap P_j| - |J \cap P_j| - 2j - 4 \le 2(s+r-3 - 2j), \]
    or equivalently, 
    \begin{equation} \label{eq:case3-second-term-gaps}
        2|I| + 2|J| - |I \cap P_j| - |J \cap P_j| + 2j \le 2r-2.
    \end{equation}
    Since $p_1 \in I, p_2 \in J$ and $\max I + 1 < \min J$, we have that $|(I \cup J) \setminus P_j| \le r-2j-1$, implying that $|I| + |J| - |I \cap P_j| - |J \cap P_j| \le r-2j-1$. Using additionally that $|I| + |J| \le r-1,$ inequality~\eqref{eq:case3-second-term-gaps} follows.

    We conclude that $\sum_{wz \in \overrightarrow{E}_\Gamma(U')} |N^L(z) \cap F_{w}| = o(\gamma_4)$, which as discussed, implies
    \[ \Pr[|N^L(v) \cap F_u| \ge \gamma_F] = o(1). \]
    We conclude that with probability $1 - o(1)$, it holds that 
    \begin{equation} \label{eq:case3-allproperties-left}
        |B \cap N^R(u)| \le \gamma_1, \; |N^L(v) \cap G_u| \le \gamma_2 \;\text{ and } |\;N^L(v) \cap F_u| \le \gamma_F.
    \end{equation}
    Denote $\tau' = m_Z^R q^{-|J \cap P_j| - 1} (\log q)^{K_{j+1}/2}$. For $w \in U'$, let $B_w' = N^L(w) \cap B$, $F_w' = \{ (x, y) \in B_w' \mid m_{B_w'}^R(y) \ge \tau' \}$ and $G_w' = B_w' \setminus F_w'$. Completely analogously, with probability $1 - o(1)$, it holds that 
    \begin{equation} \label{eq:case3-allproperties-right}
        |B \cap N^L(v)| \le \gamma_1, \; |N^R(u) \cap G_v'| \le \gamma_2 \; \text{ and } \; |N^R(u) \cap F_v'| \le \gamma_F.
    \end{equation}
    Suppose that~\eqref{eq:case3-allproperties-left} and~\eqref{eq:case3-allproperties-right} hold. Let $F' = (F \cap N^R(u) \cap N^L(v)) \cup (N^L(v) \cap F_u) \cup (N^R(u) \cap F_v')$ and $B' = (W \cap N^R(u) \cap N^L(v)) \setminus F'.$ Then, 
    $|F'| \le |F| + 2 \gamma_F \le |Z| q^{-|I| - |J|} (\log q)^{K_{j+1}} / 2$. Furthermore, note that $B' \subseteq G_u \cap N^L(v)$, so $|B'| \le \gamma_2$ and $m_{B'}^L \le \tau \le m_Z^L q^{-|I \cap P_{j+1}|} (\log q)^{K_{j+1}}$. Analogously, $m_{B'}^R \le \tau' \le m_Z^R q^{-|J \cap P_{j+1}|} (\log q)^{K_{j+1}}$. Finally, 
    \[ |W \cap N^R(u) \cap N^L(v)| \le |F'| + |B'| \le|Z| q^{-|I| - |J|} (\log q)^{K_{j+1}} / 2 + \gamma_2 \le |Z| q^{-|I \cap P_{j+1}| - |J \cap P_{j+1}|} (\log q)^{K_{j+1}}. \]
    Hence, with probability $1 - o(1)$, the edge $e \cup \{u, v\}$ is $(j+1; I, J, Z)$-nice as claimed.
\end{proof}

It is now easy to deduce Lemma~\ref{lem:inductive-building}.
\begin{proof}[Proof of Lemma~\ref{lem:inductive-building}]
    For $j=0$, the hypergraph with vertex set $S$ and containing only the single empty edge satisfies the statement. Now, suppose $0 \le j \le (r-3)/2$ and we have a $2j$-uniform hypergraph $\calH_j$ with vertex set $S$ and $e(\calH_j) \ge |S|^{2j} q^{-\binom{2j}{2}} (\log q)^{-K_j}$ in which every hyperedge is a beautiful set inducing a $2j$-clique in $\Gamma[S]$. 

    Fix $e \in E(\calH_j)$ and let $U$ be its candidate set. By Lemma~\ref{lem:good-candidate-pairs}, there exist $U' \subseteq U$ and a set of ordered edges $\cE \subseteq \overrightarrow{E}_\Gamma(U')$ satisfying \ref{item:U'size}--\ref{item:good-edge-exists-candidate-set}. Since the number of relevant pairs $(I, J)$ is a constant and there are two possibilities for $Z$, by Lemma~\ref{lem:prob-not-nice}, for at least $|\cE| / 2$ ordered edges $uv \in \cE,$ the edge $e \cup \{u, v\}$ forms a beautiful set. Include all such sets $e \cup \{u, v\}$ into $E(\calH_{j+1})$. Note that each edge $e \in E(\calH_j)$ extends to at least
    \[ \frac{1}{2} \big((|S| q^{-2j} (\log q)^{-K_j}) / 2\big)^2 / (q (\log q)^{100}) \ge |S|^2 q^{-4j-1} (\log q)^{-K_{j+1}/2} \]
    edges $e' \in E(\calH_{j+1})$. Since different edges $e \in E(\calH_j)$ produce distinct extensions, we have
    \[ e(\calH_{j+1}) \ge e(\calH_j) \cdot |S|^2 q^{-4j-1} (\log q)^{-K_{j+1}/2} \ge |S|^{2(j+1)} q^{-\binom{2(j+1)}{2}} (\log q)^{-K_{j+1}}. \]
    By construction, each hyperedge of $\calH_{j+1}$ is a beautiful set and induces a $2(j+1)$-clique in $\Gamma[S]$.
\end{proof}

\subsection{The middle vertex and completing the proof of the key lemma}
The final step is to choose the middle vertex. We have the following analogue of Lemma~\ref{lem:prob-not-nice} for the middle vertex.
\begin{lemma} \label{lem:middle-vertex}
    Let $e$ be a beautiful set of size $r-1$ with a candidate set $U$. Let $(I, J)$ be a relevant pair and let $Z \in \{T_1, T_2\}$. If $u \in U$ is chosen uniformly at random, then with probability $1 - o(1)$, the set $e \cup \{u\}$ satisfies~\eqref{eq:neighbourhood-bounds} with respect to $I, J$ and $Z$.
\end{lemma}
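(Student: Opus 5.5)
The middle vertex sits at position $p=(r+1)/2$, and $e$ is $P_{(r-1)/2}$-labeled with $P_{(r-1)/2}=[r]\setminus\{p\}$. The plan is to argue as in Case 1 of Lemma~\ref{lem:prob-not-nice}, but now only the common-neighbourhood bound \eqref{eq:neighbourhood-bounds} has to survive. No new partition or candidate set is needed.

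\textbf{Trivial case.} If $p\notin I\cup J$, then $I\cup J\subseteq P_{(r-1)/2}$. In this case \eqref{eq:nice-small-Z} for $e$ already gives $|N^R(e_I)\cap N^L(e_J)\cap Z|\le |Z|q^{-|I|-|J|}(\log q)^{K_{(r-1)/2}}$, and this is at most the ``otherwise'' bound since $C\gg K_{(r-1)/2}$.

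\textbf{Reduction.} By symmetry we may assume $p\in I$. Since $(I,J)$ is relevant, $\max I+1<\min J$, so $J\cap\{p\}=\emptyset$ and $I\cup J\not\subseteq P_{(r-1)/2}$. Write $j=(r-1)/2$ and $W=N^R(e_{I\cap P_j})\cap N^L(e_{J\cap P_j})\cap Z$. Niceness of $e$ gives a partition $W=B\cup F$ with $|F|\le |Z|q^{-|I|-|J|}(\log q)^{K_j}$. Because $I\not\subseteq P_j$, it also gives $m_B^R\le m_Z^R q^{-|J\cap P_j|}(\log q)^{K_j}$. We then need to bound $|N^R(u)\cap B|$ for a uniformly random $u\in U$. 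Since $\Pr[u=a]=1/|U|$, Markov's inequality reduces this to bounding $|\overrightarrow{E}_\Gamma(U,B)|/|U|$.

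\textbf{Applying Lemma~\ref{lem:fibre-mixing}.} The lemma gives
\[
|\overrightarrow{E}_\Gamma(U,B)|\le 2|U||B|/q+2q^{s/2-1}(m_U^Lm_B^R|U||B|)^{1/2}.
\]
The main term contributes $|B|/q\le |Z|q^{-|I|-|J|}(\log q)^{K_j}$ on average. This is fine because $|I\cap P_j|=|I|-1$ and $J\subseteq P_j$.

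For the spectral term, use $m_U^L\le m_S^Lq^{-j}(\log q)^{K_j}$ together with the hypothesis $m_S^Lm_Z^R\le |Z|(\log q)^{100}/q$. After dividing by $|U|$ and squaring, the spectral term is at most $|Z|q^{-|I|-|J|}$ times a polylog precisely when
\[
q^{s-j-1+|I\cap P_j|}\cdot q^{\,|I|+|J|-|J\cap P_j|}\cdot q^{-|I|-|J|}\lesssim |U|=|S|q^{-2j}(\log q)^{-K_j}.
\]
Since $|J\cap P_j|=|J|$, this simplifies to $s-j-1+|I|-1\le s+r-3-2j$, that is, $|I|\le r-1-j=(r+1)/2$. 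This holds, with equality exactly when $|I|=(r+1)/2$.

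\textbf{Main obstacle.} The critical subcase is equality, $|I|=(r+1)/2$ with $p\in I$. There the exponents match with no spare power of $q$, so this computation cannot give the bound $|Z|q^{-|I|-|J|}$. It only gives the weaker bound with an extra factor $q^{1/2}$, because the spectral term is a square root. This is precisely the first case of \eqref{eq:neighbourhood-bounds}, and it is the reason for the $q^{1/2}$ loss. I would check this boundary computation carefully, making sure the polylog factors are absorbed by taking $C\gg K_j$. In every other subcase there is a full factor of $q$ to spare, so we get the ``otherwise'' bound. In both subcases Markov's inequality gives failure probability $o(1)$, and adding $|F|$ completes the bound.
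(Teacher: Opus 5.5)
Your plan is the paper's proof. You take the partition $W=B\cup F$ from the niceness of $e$, bound $|\overrightarrow{E}_\Gamma(U,B)|$ with Lemma~\ref{lem:fibre-mixing}, apply Markov's inequality to $|N^R(u)\cap B|$, and add $|F|$. The trivial case, the reduction to $p\in I$ (so $J\subseteq P_j$ and $|I\cap P_j|=|I|-1$), and the main-term estimate are all fine.

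The problem is the exponent count for the spectral term, which is exactly the step that decides where the $q^{1/2}$ loss comes from. Your simplified condition $s-j-2+|I|\le s+r-3-2j$, i.e.\ $|I|\le r-1-j$, is correct; the displayed line before it has a stray $-|J|$ in the exponent. But with $j=(r-1)/2$ one has $r-1-j=(r-1)/2$, not $(r+1)/2$. Redoing it with $|U|\ge q^{s-2}(\log q)^{C-K_j}$, the square of the spectral term divided by $|U|$ is at most $(|Z|q^{-|I|-|J|})^2\, q^{|I|-j}$, up to polylogarithmic factors. So:
\begin{itemize}
\item For $|I|\le (r-1)/2$ you get the full ``otherwise'' bound. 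At $|I|=(r-1)/2$, however, there is no spare power of $q$ at all; only the $(\log q)^C$ slack with $C\gg K_{j+1}$ saves you. Your claim of ``a full factor of $q$ to spare'' is false there, though harmless.
\item For $|I|=(r+1)/2$ the required inequality \emph{fails} by exactly one factor of $q$. After taking the square root, this is the $q^{1/2}$ loss in the first case of \eqref{eq:neighbourhood-bounds}.
\end{itemize}

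As written, your justification of the critical case is a non sequitur. If the exponents really matched exactly at $|I|=(r+1)/2$, the polylogarithmic slack would give the unweakened bound and the square root would cost nothing. An exact match can never force a $q^{1/2}$ loss. So the case $|I|=(r+1)/2$ is not actually proved by what you wrote.

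The fix is the paper's: put $q^{\eps}$ into the Markov threshold. That is, compare $|\overrightarrow{E}_\Gamma(U,B)|$ with $\gamma=|U||Z|q^{-|I|-|J|+\eps}(\log q)^{K_{j+1}/2}$, where $\eps=1/2$ if $|I|=(r+1)/2$ and $\eps=0$ otherwise. The spectral-term check then reduces to $q^{|I|-j-2\eps}\le 1$, which holds because $|I|\le j+1$. With that correction your argument coincides with the paper's proof.
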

\begin{proof}
    Denote $j = (r-1) / 2$. If $j+1 \not\in I \cup J$, then~\eqref{eq:neighbourhood-bounds} holds for any $w \in U$ so there is nothing to prove. Let us assume that $j+1 \in I$, the case $j+1 \in J$ being completely analogous.

    Let $W = Z \cap N^R(e_{I \cap P_j}) \cap N^L(e_{J \cap P_j})$ and let $B \cup F = W$ be the partition satisfying~\ref{item:partition-few-faulty}--\ref{item:partition-right-multiplicity}. Let $\eps = 1/2$ if $|I| = (r+1)/2$ and $\eps = 0,$ otherwise, and let 
    \[ \gamma = |U| |Z| q^{-|I| - |J| + \eps} (\log q)^{K_{j+1}/2}. \]
    By Lemma~\ref{lem:fibre-mixing}, we have
    \begin{equation} \label{eq:middle-vtx-edges}
        |\overrightarrow{E}_\Gamma(U,B)| \le 2|U| |B| / q + 2q^{s/2-1} (m_U^L m_B^R |U| |B|)^{1/2}.
    \end{equation}
    Recall that $|U| = |S| q^{-2j} (\log q)^{-K_j} \ge q^{s-2} (\log q)^{C/2}$ and note that 
    \begin{equation} \label{eq:UB}
        |U||B| \le |U| \cdot |Z| q^{-|I \cap P_j| - |J \cap P_j|} (\log q)^{K_j} = |U| |Z| q^{-|I| - |J| + 1} (\log q)^{K_j},
    \end{equation}
    where we used that $|I \cap P_j| + |J \cap P_j| + 1 = |I| + |J|$. Hence, $2 |U| |B| / q \le \gamma$, by taking $K_{j+1}$ sufficiently large.
    
    Let us now show that the second term in~\eqref{eq:middle-vtx-edges} is at most $\gamma$ as well. Since $I \not\subseteq P_j$, we have $m_B^R \le m_Z^R q^{-|J \cap P_j|} (\log q)^{K_j}$ and recall that $m_U^L \le m_S^L q^{-j} (\log q)^{K_j}$. Hence,
    \[ m_U^L m_B^R \le m_S^L m_Z^R q^{-|J \cap P_j| - j} (\log q)^{K_{j+1}/8} \le |Z| q^{-|J \cap P_j| - j - 1} (\log q)^{K_{j+1}/6} =  |Z| q^{-|J| - j - 1} (\log q)^{K_{j+1}/6}. \] 
    Combining the above with~\eqref{eq:UB}, we obtain
    \begin{align*}
        \left( 2q^{s/2-1} (m_U^L m_B^R |U| |B|)^{1/2} \right)^2 / \gamma^2 \le q^{|I| + s - 2 - j - 2\eps} (\log q)^{K_{j+1}} / |U| \le q^{|I| - j - 2\eps} \le 1,
    \end{align*}
    where in the second to last inequality we took $C$ sufficiently large and in the last inequality we used that $|I| \le j + 2\eps$, which follows from the definition of $\eps$ and the fact that $|I| \le j+1$.

    Hence, $|\overrightarrow{E}_\Gamma(U,B)| \le 2 \gamma$. Therefore, by Markov's inequality, 
    \[ \Pr \left[|B \cap N^R(u)| > |Z| q^{-|I| - |J| + \eps} (\log q)^{K_{j+1}} / 2\right] \le 2\gamma / (|U| |Z| q^{-|I| - |J| + \eps} (\log q)^{K_{j+1}} / 2) = o(1). \]
    Recall that $|F| \le |Z| q^{-|I| - |J|} (\log q)^{K_j} <|Z| q^{-|I| - |J| + \eps} (\log q)^{K_{j+1}} / 2$ and note that $|W \cap N^R(u)| \le |B \cap N^R(u)| + |F|$. Hence, 
    \[ \Pr \left[|W \cap N^R(u)| > |Z| q^{-|I| - |J| + \eps} (\log q)^{K_{j+1}}\right] \le \Pr \left[|B \cap N^R(u)| > |Z| q^{-|I| - |J| + \eps} (\log q)^{K_{j+1}} / 2\right] = o(1), \]
    proving the lemma.
\end{proof}

We are ready to complete the proof of Lemma~\ref{lem:key-technical-lemma}.
\begin{proof}[Proof of Lemma~\ref{lem:key-technical-lemma}]
    By Lemma~\ref{lem:inductive-building}, there is an $(r-1)$-uniform hypergraph $\calH_{r-1}$ with vertex set $S$ and $e(\calH_{r-1}) \ge |S|^{r-1} q^{-\binom{r-1}{2}} (\log q)^{-K_{(r-1)/2}}$ such that every hyperedge is a beautiful set inducing an $(r-1)$-clique in $\Gamma[S]$.

    We construct the desired $r$-uniform hypergraph $\calH$ as follows. Consider an arbitrary edge $e \in E(\calH_{r-1})$ and let $U$ be its candidate set. Union bounding over the constantly many choices for a relevant pair $(I, J)$ and $Z \in \{T_1, T_2\}$, Lemma~\ref{lem:middle-vertex} implies that for at least $|U| / 2$ vertices $u \in U$, the set $e \cup \{ u \}$ satisfies~\eqref{eq:neighbourhood-bounds} for all relevant pairs $(I, J)$ and all $Z \in \{T_1, T_2\}$. We include all such sets $e \cup \{u\}$ into $E(\calH)$.

    By construction,~\eqref{eq:neighbourhood-bounds} is satisfied for all $e \in E(\calH)$, any relevant $(I, J)$ and $Z \in \{T_1, T_2\}$, so it remains to lower bound the number of edges of $\calH$. Recalling the lower bound~\ref{item:U-size} for a candidate set for a beautiful set of size $r-1$, we have
    \[ e(\calH) \ge e(\calH_{r-1}) \cdot (|S| q^{-(r-1)} (\log q)^{-K_{(r-1)/2}}) / 2 \ge |S|^r q^{-\binom{r}{2}} (\log q)^{-C}, \]
    for a sufficiently large constant $C$, as required.
\end{proof}

\section{Balanced supersaturation}
\label{sec: balanced-supersaturation}
In this section, we will use Lemma~\ref{lem:key-technical-lemma} to construct, in any sufficiently large subset of $V(\Gamma)$, a large collection of cliques with suitably controlled codegrees. We first establish this for cliques of odd order, in a slightly stronger form which also controls their common neighbourhoods inside an auxiliary set $T$. This additional control will be needed when we deduce the even case. Throughout this section and the next, all occurrences of $O(1)$ are understood to be $O_{\ell,s}(1)$. 

\begin{lemma}
\label{lem: odd-codegree-from-main-lemma}
There exists a constant $C_1=C_1(\ell,s)$ such that the following holds for all sufficiently large $q$. Let $3\le r\le \ell$ be an odd integer, and let $W,T\subseteq V(\Gamma)$ satisfy $|W|\ge q^{s+r-3}(\log q)^{C_1}$ and $T \supseteq W$. Furthermore, suppose that $ m_W^Lm_W^R\le |W|(\log q)^{20}/q,
m_W^Lm_T^R\le |T|(\log q)^{20}/q$ and
$m_W^Rm_T^L\le |T|(\log q)^{20}/q$.
Then there is an $r$-uniform hypergraph $\calH$ with vertex set $W$ such that every edge of $\calH$ is a clique in $\Gamma[W]$, and the following hold.
\begin{enumerate}[label=\textnormal{(A\arabic*)}]
    \item \label{item:many-hyperedges}
    $e(\calH)\ge |W|^rq^{-\binom{r}{2}}(\log q)^{-O(1)}$;
    \item \label{item:codegrees-inside-W} $\Delta_j(\calH)\le (|W|q^{-r/2})^{r-j}(\log q)^{O(1)}$ for every $j\in[r]$;
    \item \label{item:neighbourhood-in-T}for every edge $e=\{v_1<\cdots<v_r\}$ and every relevant pair $(I,J)$,
    \[
    |N^R(e_I)\cap N^L(e_J)\cap T|
    \le
    \begin{cases}
    |T|q^{-|I|-|J|+1/2}(\log q)^{O(1)},&
    \text{if }|I|=(r+1)/2\text{ and }(r+1)/2\in I,\\
    |T|q^{-|I|-|J|+1/2}(\log q)^{O(1)},&
    \text{if }|J|=(r+1)/2\text{ and }(r+1)/2\in J,\\
    |T|q^{-|I|-|J|}(\log q)^{O(1)},&
    \text{otherwise}.
    \end{cases}
    \]
\end{enumerate}

\end{lemma}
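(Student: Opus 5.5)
The plan is a maximality argument built on Lemma~\ref{lem:key-technical-lemma}, as foreshadowed after its statement. We fix the threshold $D = |W|^{r-1} q^{-\binom{r}{2}} (\log q)^{-C'}$ for a suitable constant $C'$, and take $\calH$ to be a maximal collection of $[r]$-labeled $r$-cliques in $\Gamma[W]$ subject to three conditions.
\begin{enumerate}[label=(\roman*)]
    \item Every edge satisfies \eqref{eq:neighbourhood-bounds} with $Z\in\{W,T\}$, where the $(\log q)^C$ factor comes from the key lemma.
    \item Every vertex has degree at most $2D'$ in $\calH$, where $D' = |W|^{r-1}q^{-\binom{r}{2}}(\log q)^{-C}$ is of the form in \ref{item:many-hyperedges} divided by $|W|$.
\end{enumerate}

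\textbf{Codegrees for $j\ge 2$.} First I check that (i) already gives \ref{item:codegrees-inside-W} for every $j\ge2$, and that (ii) gives it for $j=1$. Fix a $j$-set $A$ with positions $P\subseteq[r]$. The number of edges containing $A$ is bounded by choosing the remaining $r-j$ vertices one at a time, in a carefully chosen order. Each new vertex at position $p$ must lie in $N^R(e_I)\cap N^L(e_J)\cap W$, where $I$ is a set of already-chosen vertices to its left and $J$ a set to its right. We may pick $(I,J)$ as any relevant pair among the chosen vertices, with $I$ ending before $p-1$ and $J$ starting after $p+1$, and with sizes at most $(r+1)/2$.

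The main combinatorial obstacle is choosing the order and the pairs so that the product of bounds $|W|q^{-|I|-|J|(+1/2)}$ is at most $(|W|q^{-r/2})^{r-j}$. This amounts to showing that the total exponent of $q$ gathered is at least $(r-j)r/2$. Heuristically, each new vertex can see about $r-3$ or $r-2$ earlier vertices on the two sides, capped at $(r+1)/2$ per side. I would argue via the ``ends towards the middle'' order: fill the missing positions from the extremes inward, as in the construction, or reversed, whichever gives more. The gap condition $\max I+1<\min J$ costs the neighbours $p\pm1$. The $+1/2$ losses occur only when a full side of size $(r+1)/2$ contains the middle position, which happens for at most one vertex. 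The hypothesis $j\ge2$ is what gives enough slack to absorb these losses. I would verify the inequality by an averaging/pairing argument over positions, pairing position $p$ with $r+1-p$.

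\textbf{Degrees and edge count.} For $j=1$, condition (ii) gives $\Delta_1\le 2D'$, which is at most $(|W|q^{-r/2})^{r-1}(\log q)^{O(1)}$, since $\binom r2\ge r(r-1)/2$ exactly matches. For \ref{item:many-hyperedges}, suppose $e(\calH)<|W|D'/(4r)$. Then the set $Y'$ of vertices of degree at most $D'$ has $|Y'|\ge|W|/2$.

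Apply Corollary~\ref{cor:regularised-subset} to $Y'$ to get $S\subseteq Y'$ with $|S|\ge|W|/(2(\log q)^3)$ and $m_S^Lm_S^R\le|S|(\log q)^7/q$. Take $T_1=W$ and $T_2=T$. The cross-multiplicity hypotheses follow from monotonicity: $m_S^L\le m_W^L$, combined with the assumed bounds $m_W^Lm_T^R\le |T|(\log q)^{20}/q$, and similarly for $Z=W$. The size requirement holds once $C_1\ge C+4$.

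The key lemma then yields at least $|S|^rq^{-\binom r2}(\log q)^{-C}$ new cliques on $S$ satisfying (i). Because $|S|$ and $|W|$ agree up to polylogarithmic factors, a random subfamily has expected degree at each vertex below $D'$. More carefully, greedily delete edges through vertices whose degree exceeds $D'$. Since degrees in the new family average about $|S|^{r-1}q^{-\binom r2}(\log q)^{-C}$, we can thin by a $(\log q)^{O(1)}$ factor and keep a nonempty family. Adding any one such clique keeps all degrees at most $2D'$, contradicting maximality.

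Finally, \ref{item:neighbourhood-in-T} follows directly from (i) with $Z=T$.
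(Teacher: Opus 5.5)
Your overall plan is the same as the paper's: a maximal family with a degree cap and the neighbourhood bounds for $Z\in\{W,T\}$; the key lemma applied to the low-degree vertices with $T_1=W$, $T_2=T$; and codegrees for $j\ge 2$ by filling the free positions from the ends inwards, pairing $p$ with $r+1-p$. However, the codegree step, which you yourself flag as the main obstacle, is never carried out, and your description of it rests on a misreading.

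\textbf{The codegree step.} The gap condition $\max I+1<\min J$ costs nothing. When $v_p$ is chosen, the determined vertices used on its left have positions $\le p-1$ and those on its right have positions $\ge p+1$. So $\max I+1\le p<\min J$ holds automatically, and the neighbours $p\pm1$ can and must be used. Under your reading the bound already fails for $r=3$, $j=2$ and fixed positions $\{1,2\}$. There $v_3$ could only be constrained via $I=\{1\}$, giving $|W|q^{-1}$ choices instead of the required $|W|q^{-3/2}$, which comes from $I=\{1,2\}$ in the $+1/2$ case.

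The missing argument is an accounting of losses.
\begin{itemize}
\item Whatever the order, the free vertices see $j,j+1,\dots,r-1$ determined vertices. Without the size cap this would give $|W|^{r-j}q^{-\binom{r}{2}+\binom{j}{2}}$, which beats the target $(|W|q^{-r/2})^{r-j}$ by exactly $q^{(r-j)(j-1)/2}$.
\item The only loss comes from a side carrying more than $\lfloor r/2\rfloor$ determined vertices, which must be cut to $\lceil r/2\rceil$ at the price of the $+1/2$.
\item In the order $v_1,v_r,v_2,v_{r-1},\dots$, with $Q_i=Q\cap\{i,\dots,r-i+1\}$, this loss is $b_i=\max\{0,i-1+|Q_i|-r/2\}$ at position $i$ and $b_{r-i+1}=\max\{0,i+|Q_{i+1}|-r/2\}$ at position $r-i+1$.
\item One must show $\sum_{p\notin Q}b_p\le (r-j)(j-1)/2$. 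This holds pair by pair. If only one of $i,r-i+1$ is free, then $|Q_i|\le\min\{j,r-2i+1\}$ (resp.\ $|Q_{i+1}|\le\min\{j-1,r-2i\}$) gives a loss of at most $(j-1)/2$. If both are free, the loss is $\max\{0,x\}+\max\{0,x+1\}$ with $x=i-1+|Q_i|-r/2$ a half-integer, which is at most $j-1$ using $2|Q_i|\le j+r-2i$ and $j\ge 2$.
\end{itemize}
Your heuristic that the $+1/2$ penalty hits at most one vertex is not what drives this, and it is false. For $r=5$, $Q=\{2,3\}$, both $v_5$ and $v_4$ use the full left side $\{v_1,v_2,v_3\}$.

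\textbf{The edge count.} This also does not close as written. Maximality is contradicted only by a clique of the key lemma's output $\calH'$ that is not already in $\calH$, and you never check that one exists. With your parameters it need not:
\begin{itemize}
\item the cap $D'$ carries the same $(\log q)^{-C}$ as the key lemma's lower bound;
\item the regularised set only satisfies $|S|\ge |W|/(2(\log q)^3)$, so $e(\calH')\ge |S|^rq^{-\binom{r}{2}}(\log q)^{-C}$ need not exceed $e(\calH)$;
\item vertices of $S$ may have $\calH$-degree up to $D'$, which can exceed their $\calH'$-degree by a polylogarithmic factor, so $\calH'\subseteq\calH$ is not excluded.
\end{itemize}
The greedy thinning is beside the point, since adding one clique raises degrees by only one.

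The paper instead caps degrees at $D=|W|^{r-1}q^{-\binom{r}{2}}$ with no logarithmic loss and assumes $e(\calH)\le |W|^rq^{-\binom{r}{2}}(\log q)^{-(C+1)}$. It then applies the key lemma directly to $W_0=\{v: d_{\calH}(v)<D/2\}$ without regularising, since $m^L_{W_0}m^R_{W_0}\le m^L_Wm^R_W\le 2|W_0|(\log q)^{20}/q$. This gives $e(\calH')\ge 2^{-r}|W|^rq^{-\binom{r}{2}}(\log q)^{-C}>e(\calH)$. Alternatively, in your setup you can observe that $\calH'\subseteq\calH$ would itself give $e(\calH)\ge e(\calH')\ge |W|^rq^{-\binom{r}{2}}(\log q)^{-O(1)}$, which is (A1).
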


\begin{proof}
    Let $C = C(\ell, s)$ be given by Lemma~\ref{lem:key-technical-lemma} and set $C_1=C+1$. Pick a maximal family $\calH$ of ordered $r$-cliques in $\Gamma[W]$ such that the following hold.
    \begin{enumerate}[label=\textnormal{(B\arabic*)}]
        \item \label{item:single-degree}$\Delta_1(\calH)\le |W|^{r-1}q^{-\binom{r}{2}}$;
        \item \label{item:neighbourhood-in-T/W}for every edge $e=\{v_1<\cdots<v_r\}\in E(\calH)$, every relevant pair $(I,J)$, and every $Z\in\{W,T\}$,
        \[
        |N^R(e_I)\cap N^L(e_J)\cap Z|
        \le
        \begin{cases}
        |Z|q^{-|I|-|J|+1/2}(\log q)^C,
        &\text{if }|I|=(r+1)/2\text{ and }(r+1)/2\in I,\\
        |Z|q^{-|I|-|J|+1/2}(\log q)^C,
        &\text{if }|J|=(r+1)/2\text{ and }(r+1)/2\in J,\\
        |Z|q^{-|I|-|J|}(\log q)^C,
        &\text{otherwise}.
        \end{cases}
        \]
    \end{enumerate}
    Clearly, $\calH$ satisfies \ref{item:neighbourhood-in-T} and \ref{item:codegrees-inside-W} for $j=1$. We first prove \ref{item:many-hyperedges}.

    \begin{claim}
    \label{claim:edgecount1}
        We have $e(\calH)\ge |W|^rq^{-\binom{r}{2}}(\log q)^{-(C+1)}$.
    \end{claim}

    \begin{proof}
        Suppose towards a contradiction that
        $e(\calH)\le |W|^rq^{-\binom{r}{2}}(\log q)^{-(C+1)}$. Set
        $D=|W|^{r-1}q^{-\binom{r}{2}}$ and let
        \[
        W'=\{v\in W:d_{\calH}(v)\ge D/2\},\qquad W_0=W\setminus W'.
        \]
        Since $|W'|D/2\le\sum_{v\in W}d_{\calH}(v)=re(\calH)$, we have
        $|W'|\le 2r|W|(\log q)^{-(C+1)}<|W|/2$, and hence $|W_0|\ge |W|/2$.

        We apply Lemma~\ref{lem:key-technical-lemma} with $S=W_0$, $T_1=W$ and $T_2=T$. Since $W_0\subseteq W$,
        \[
        m_{W_0}^Lm_{W_0}^R\le m_W^Lm_W^R
        \le \frac{|W|(\log q)^{20}}q
        \le \frac{|W_0|(\log q)^{21}}q.
        \]
        Similarly,
        $m_{W_0}^Lm_W^R,m_{W_0}^Rm_W^L\le m_W^Lm_W^R\le |W|(\log q)^{20}/q$, while
        $m_{W_0}^Lm_T^R\le m_W^Lm_T^R\le |T|(\log q)^{20}/q$ and
        $m_{W_0}^Rm_T^L\le m_W^Rm_T^L\le |T|(\log q)^{20}/q$.
        Also $W,T \supseteq W_0$, and the required lower bound on $|W_0|$ follows from $|W_0|\ge |W|/2$ and the choice of $C_1$.

        Hence, Lemma~\ref{lem:key-technical-lemma} gives a family $\calH'$ of $r$-cliques in $\Gamma[W_0]$ satisfying \ref{item:neighbourhood-in-T/W} and
        \[
        e(\calH')\ge |W_0|^rq^{-\binom{r}{2}}(\log q)^{-C}
        \ge 2^{-r}|W|^rq^{-\binom{r}{2}}(\log q)^{-C}>e(\calH).
        \]
        Thus there is an edge $e\in E(\calH')\setminus E(\calH)$. Every vertex of $e$ lies in $W_0$ and therefore has degree less than $D/2$ in $\calH$, so adding $e$ preserves \ref{item:single-degree} for sufficiently large $q$. Property \ref{item:neighbourhood-in-T/W} holds for $e$ by construction, contradicting the maximality of $\calH$.
    \end{proof}

    We next prove the codegree bounds required in \ref{item:codegrees-inside-W} for $j \ge 2.$ We give a brief overview of the argument. After fixing the positions of the prescribed $j$ vertices, we choose the remaining vertices of the clique successively, working from the two ends towards the centre. At each step, the vertices already determined constrain the next vertex to lie in an appropriate common left and right neighbourhood, which we bound using \ref{item:neighbourhood-in-T/W}. When there are too many determined vertices on one side to apply \ref{item:neighbourhood-in-T/W} directly, we use only a suitable subset of them. We keep track of the loss arising from this truncation and show that, over all the steps, the total loss is small enough to give the desired codegree bound.

    \begin{claim}
    \label{claim:codegrees1}
    For every $2\le j\le r$, we have
    \[
        \Delta_j(\calH)\le \left(|W|q^{-r/2}\right)^{r-j}(\log q)^{O(1)}.
    \]
    \end{claim}
    
    \begin{proof}
        Fix $J=\{u_1<\cdots<u_j\}\subseteq W$ and a set of positions
        $Q=\{q_1<\cdots<q_j\}\subseteq[r]$. We bound the number of edges
        $e=\{v_1<\cdots<v_r\}\in E(\calH)$ such that $v_{q_t}=u_t$ for every
        $t\in[j]$. Since there are only $O_r(1)$ choices for $Q$, it suffices to prove the required bound for each fixed choice.
    
        We choose the vertices in the unfixed positions successively in the order
        \[
            v_1,v_r,v_2,v_{r-1},\ldots,
            v_{\lfloor r/2\rfloor},v_{\lceil r/2\rceil+1},
            v_{\lceil r/2\rceil},
        \]
        skipping the positions in $Q$. For $1\le i\le\lceil r/2\rceil$, let
        $Q_i=Q\cap\{i,\ldots,r-i+1\}$, and for every unfixed position
        $p\in[r]\setminus Q$, let $a_p$ denote the number of vertices of the clique that have already been determined when $v_p$ is chosen.
    
        Fix $i\in[\lfloor r/2\rfloor]$ and suppose first that $i\notin Q$. When choosing $v_i$, the vertices $v_1,\ldots,v_{i-1}$ to its left have already been chosen, while the vertices
        $\{v_k:k\in Q_i\}\cup\{v_{r-i+2},\ldots,v_r\}$
        to its right are already determined. Hence,
        $a_i=2(i-1)+|Q_i|$. Moreover,
        \[
            v_i\in
            N^R(\{v_1,\ldots,v_{i-1}\})\cap
            N^L\!\left(\{v_k:k\in Q_i\}\cup
            \{v_{r-i+2},\ldots,v_r\}\right)\cap W.
        \]
        If $i-1+|Q_i|\le\lfloor r/2\rfloor$, we apply \ref{item:neighbourhood-in-T/W} to these two sets of determined vertices and obtain at most
        \[
            |W|q^{-2(i-1)-|Q_i|}(\log q)^C
        \]
        choices for $v_i$. If $i-1+|Q_i|\ge\lceil r/2\rceil$, choose a subset of
        $\{v_k:k\in Q_i\}\cup\{v_{r-i+2},\ldots,v_r\}$ of size $\lceil r/2\rceil$ and apply \ref{item:neighbourhood-in-T/W} using this subset together with
        $\{v_1,\ldots,v_{i-1}\}$. In this case \ref{item:neighbourhood-in-T/W} gives at most
        \[
            |W|q^{-(i-1)-\lceil r/2\rceil+1/2}(\log q)^C
            =|W|q^{-(i-1)-r/2}(\log q)^C
        \]
        choices for $v_i$. Thus, in either case, the number of choices for $v_i$ is at most
        \[
            |W|q^{-(i-1)-\min\{r/2,\,i-1+|Q_i|\}}(\log q)^C
            =|W|q^{-a_i+b_i}(\log q)^C,
        \]
        where $b_i:=\max\{0,i-1+|Q_i|-r/2\}$.
    
        Similarly, fix $i\in[\lfloor r/2\rfloor]$ and suppose that $r-i+1\notin Q$. When choosing $v_{r-i+1}$, the vertices $v_{r-i+2},\ldots,v_r$ to its right have already been chosen, while the vertices
        $\{v_k:k\in Q_{i+1}\}\cup\{v_1,\ldots,v_i\}$
        to its left are already determined. Hence
        $a_{r-i+1}=2i-1+|Q_{i+1}|$, and
        \[
            v_{r-i+1}\in
            N^R\!\left(\{v_k:k\in Q_{i+1}\}\cup\{v_1,\ldots,v_i\}\right)
            \cap N^L(\{v_{r-i+2},\ldots,v_r\})\cap W.
        \]
        Applying \ref{item:neighbourhood-in-T/W} exactly as above, using all the determined vertices on the left if there are at most $\lfloor r/2\rfloor$ of them, and otherwise using any $\lceil r/2\rceil$ of them, the number of possible choices for $v_{r-i+1}$ is at most
        \[
            |W|q^{-(i-1)-\min\{r/2,\,i+|Q_{i+1}|\}}(\log q)^C
            =|W|q^{-a_{r-i+1}+b_{r-i+1}}(\log q)^C,
        \]
        where $b_{r-i+1}:=\max\{0,i+|Q_{i+1}|-r/2\}$.
    
        Finally, if the centre position $\lceil r/2\rceil$ is unfixed, then all the other $r-1$ vertices have already been determined when it is chosen, and
        \[
            v_{\lceil r/2\rceil}\in
            N^R(\{v_1,\ldots,v_{\lfloor r/2\rfloor}\})
            \cap
            N^L(\{v_{\lceil r/2\rceil+1},\ldots,v_r\})\cap W.
        \]
        Both sets appearing here have size $\lfloor r/2\rfloor$, so \ref{item:neighbourhood-in-T/W} gives at most
        $|W|q^{-(r-1)}(\log q)^C$ choices. Thus
        $a_{\lceil r/2\rceil}=r-1$, and we set
        $b_{\lceil r/2\rceil}=0$.
    
        Multiplying these bounds, the number of edges corresponding to the fixed choice of $Q$ is at most
        \[
            |W|^{r-j}q^{-\sum_{p\notin Q}a_p+\sum_{p\notin Q}b_p}(\log q)^{O(1)}.
        \]
        As the $r-j$ unfixed vertices are chosen successively, the values $a_p$ are
        $j,j+1,\ldots,r-1$. Hence
        $\sum_{p\notin Q}a_p=\binom r2-\binom j2$, and it suffices to show that
        \[
            \sum_{p\notin Q}b_p\le \binom{r}{2}-\binom{j}{2}-\frac{r(r-j)}{2}=\frac{(r-j)(j-1)}2.
        \]
    
        We bound the sum on the left-hand side by considering the pairs of positions $i$ and $r-i+1$. Suppose first that exactly one of these positions is unfixed. If $i\notin Q$ and $r-i+1 \in Q$, then
        \[
            |Q_i|\le\min\{j,r-2i+1\}\le\frac{j+r-2i+1}{2},
        \]
        and hence $b_i\le(j-1)/2$. If instead $r-i+1\notin Q$ and $i \in Q$, then
        \[
            |Q_{i+1}|\le\min\{j-1,r-2i\}\le\frac{j-1+r-2i}{2},
        \]
        and similarly $b_{r-i+1}\le(j-1)/2$.
    
        Suppose now that both positions are unfixed. Then $Q_i=Q_{i+1}$ and
        $|Q_{i}|\le\min\{j,r-2i\}$. Put
        $x=i-1+|Q_i|-r/2$. Then
        \[
            b_i+b_{r-i+1}=\max\{0,x\}+\max\{0,x+1\}.
        \]
        Since $r$ is odd, $x$ is a half-integer. If $x\le-1/2$, the right-hand side is at most $1/2\le j-1$, since $j \ge 2$. Otherwise $x\ge1/2$, and
        \[
            b_i+b_{r-i+1}
            =2i-1+2|Q_i|-r
            \le j-1,
        \]
        where we used $2|Q_i|\le j+r-2i$. Thus, each pair $\{i, r-i+1\}, i \in [\floor{r/2}]$ contributes at most $(j-1)/2$ times the number of its unfixed positions. Finally, the centre position contributes $0$ to the sum $\sum_{p \notin Q }b_p$. Since there are $r-j$ unfixed positions in total,
        \[
            \sum_{p\notin Q}b_p\le\frac{(r-j)(j-1)}2.
        \]
        The claim follows.
    \end{proof}
    Combining Claims \ref{claim:edgecount1} and \ref{claim:codegrees1}, the lemma follows.
\end{proof}

We now deduce the balanced supersaturation result for cliques of arbitrary order. The odd case follows directly from Lemma~\ref{lem: odd-codegree-from-main-lemma}. For even $\ell$, we reduce to the odd case by choosing a suitable first vertex $v_1$ and finding the remaining $\ell-1$ vertices inside its right neighbourhood.

\begin{lemma} \label{lem:final-hypergraph}
    There exists a constant $C_2=C_2(\ell,s)$ such that the following holds for all sufficiently large $q$. Let $Y \subseteq V(\Gamma)$ be such that $|Y|\ge q^{s+\ell-3}(\log q)^{C_2}.$ Suppose further that $m_Y^Lm_Y^R \le |Y|(\log q)^7/q$. Then, there exists an $\ell$-uniform hypergraph $\calH$ with vertex set $Y$ such that every hyperedge forms a clique in $\Gamma[Y]$. Moreover, we have $e(\calH) \ge |Y|^{\ell}q^{-\binom{\ell}{2}}(\log q)^{-O(1)}$, and for any $j \in [\ell]$, it holds that
    \[ \Delta_j(\calH) \le \left(\frac{|Y|}{q^{\ell/2}}\right)^{\ell-j} (\log  q)^{O(1)}. \]
    
\end{lemma}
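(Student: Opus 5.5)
For odd $\ell$, I would apply Lemma~\ref{lem: odd-codegree-from-main-lemma} directly with $r=\ell$ and $W=T=Y$. The hypothesis $m_Y^Lm_Y^R\le |Y|(\log q)^7/q$ gives all three multiplicity conditions, and $|Y|\ge q^{s+\ell-3}(\log q)^{C_2}$ with $C_2\ge C_1$ gives the size condition. Properties \ref{item:many-hyperedges} and \ref{item:codegrees-inside-W} are then exactly the claimed bounds.

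For even $\ell$, set $r=\ell-1$, which is odd. I would build $\ell$-cliques $\{v_1<\dots<v_\ell\}$ by first choosing $v_1$ and then placing an $r$-clique in $Y\cap N^R(v_1)$.

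\textbf{Choosing $v_1$.} By Lemma~\ref{lem: large-right/left-degree}, at least $|Y|/(\log q)^8$ vertices $v\in Y$ satisfy $|Y\cap N^R(v)|\ge |Y|/(q(\log q)^8)$. For each such $v$, apply Lemma~\ref{lem:coordinate-regularisation} to $Y\cap N^R(v)$ to get a regularised subset $W_v$. This subset has size $|W_v|\ge |Y|q^{-1}(\log q)^{-11}\ge q^{s+r-3}(\log q)^{C_1}$, using $s+\ell-4=s+r-3$. Lemma~\ref{product-deltas-bound} then gives $m_{W_v}^Lm_{W_v}^R\le |W_v|(\log q)^7/q$. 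Take $T=Y$. The cross conditions $m_{W_v}^Lm_Y^R\le m_Y^Lm_Y^R\le |Y|(\log q)^{7}/q$ hold automatically. Lemma~\ref{lem: odd-codegree-from-main-lemma} then yields $\calH_v$ with about $|W_v|^{r}q^{-\binom r2}$ edges. Setting $\calH=\bigcup_v\{\{v\}\cup e: e\in\calH_v\}$ gives
\[
e(\calH)\ge |Y|\cdot (|Y|/q)^{\ell-1}q^{-\binom{\ell-1}{2}}(\log q)^{-O(1)}=|Y|^\ell q^{-\binom\ell2}(\log q)^{-O(1)}.
\]
The first vertex of each edge is $v$, so edges arising from distinct $v$ are distinct.

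\textbf{Codegrees.} Fix a $j$-set $A$. If $A$ does not contain $v_1$, its elements sit among the positions $2,\dots,\ell$, and I need to sum over $v_1$. Here I would use \ref{item:neighbourhood-in-T} with $T=Y$. The relevant pairs with $I=\emptyset$ bound the number of $v$ lying in $N^L(e_J)\cap Y$. So I would bound the count by first fixing the $r$-clique part, whose codegree inside $W_v$ is controlled via \ref{item:codegrees-inside-W}, and then choosing $v$ in $N^L(\cdot)\cap Y$ of at most $\lceil r/2\rceil$ of its vertices.

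To make this rigorous, I would repeat the ordered counting of Claim~\ref{claim:codegrees1} for $\ell$ positions, treating $v_1$ as an extra leftmost position. Since each $W_v$ is a subset of $Y\cap N^R(v)$, I would use $|W_v|\le |Y|/q$-type bounds together with the $T=Y$ neighbourhood bounds. The budget to verify is
\[
\sum b_p\le \binom{\ell}{2}-\binom j2-\frac{\ell(\ell-j)}2,
\]
and I would check it pairwise as in that claim.

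If $A\ni v_1$, I would fix $v_1$ and use \ref{item:codegrees-inside-W} in $\calH_{v_1}$ with $j-1$ fixed vertices. This gives $(|W_{v_1}|q^{-r/2})^{r-j+1}\le (|Y|q^{-1-(\ell-1)/2})^{\ell-j}$, which is at most $(|Y|q^{-\ell/2})^{\ell-j}$.

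\textbf{Main obstacle.} The hard step is the codegree bound when $v_1\notin A$. A sharp per-$v$ bound does not sum well, so a direct sum over $v$ is hopeless. The degree case $j=1$ is the worst. I expect to need either a maximality argument as in Claim~\ref{claim:edgecount1}, capping degrees by $|Y|^{\ell-1}q^{-\binom\ell2}$ and reapplying the construction on low-degree vertices, or to rely on the $+1/2$ slack in \ref{item:neighbourhood-in-T}. I would try the maximality argument first, since it also handles $j=1$ uniformly.
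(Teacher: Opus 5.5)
Your odd case and the skeleton of your even case match the paper: choose $v_1$, then place an $(\ell-1)$-clique from Lemma~\ref{lem: odd-codegree-from-main-lemma} in a controlled subset of $N^R(v_1)\cap Y$, with $T=Y$. Regularising $Y\cap N^R(v)$ via Lemmas~\ref{lem:coordinate-regularisation} and~\ref{product-deltas-bound} is a legitimate substitute for the paper's use of Lemma~\ref{lem:expected-number-of-tau-faulty-right-neighbourhood}.

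The gap is the step you flag yourself. The codegree bound for a prescribed $j$-set not containing $v_1$ is never proved, and this is the whole content of the even case. Your proposed remedies do not close it. Capping degrees by maximality only addresses $j=1$. An ordered count over $\ell$ positions would need neighbourhood bounds involving $v_1$ that Lemma~\ref{lem: odd-codegree-from-main-lemma} does not provide. Meanwhile, the direct sum over $v_1$ that you call hopeless is exactly what works, once you reverse your order of counting. You cannot ``first fix the $r$-clique part'', since it lives in $\calH_{v_1}$, which depends on $v_1$.

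The missing argument is as follows. Let the prescribed vertices be $u_1<\dots<u_j$ at positions $q_1<\dots<q_j$ in $\{2,\dots,\ell\}$. Choose $Q'\subseteq\{q_1,\dots,q_j\}$ with $|Q'|=\min\{j,\ell/2\}$, avoiding position $\ell/2+1$ when $j>\ell/2$.

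\begin{itemize}
\item Every admissible $v_1$ lies in $N^L(\{u_t: q_t\in Q'\})\cap Y$, a set depending only on the prescribed vertices. Applying \ref{item:neighbourhood-in-T} with $I=\emptyset$ to any one edge containing them bounds its size by $|Y|q^{-a(j)}(\log q)^{O(1)}$. Here $a(j)=\min\{j,\ell/2\}$, except $a(\ell/2)=(\ell-1)/2$.
\item For each such $v_1$, \ref{item:codegrees-inside-W} gives at most $(|Y|q^{-(\ell+1)/2})^{\ell-1-j}(\log q)^{O(1)}$ edges.
\item The product is at most $(|Y|q^{-\ell/2})^{\ell-j}(\log q)^{O(1)}$ exactly when $a(j)\ge (j+1)/2$. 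This holds for all $j\in[\ell-1]$, with equality at $j=1$, and at $j=\ell/2$ it uses $\ell\ge4$.
\end{itemize}

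Two smaller repairs are also needed.

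First, your bounds use $|W_{v_1}|\le |Y|q^{-1}(\log q)^{O(1)}$, but you only bound $|Y\cap N^R(v)|$ from below. You can discard the at most $4|Y|/(\log q)^{10}$ vertices $v$ with $|Y\cap N^R(v)|\ge|Y|(\log q)^{10}/q$. This uses $e_\Gamma(Y)\le 4|Y|^2/q$, which follows from Lemma~\ref{lem:fibre-mixing} and the hypothesis on $m_Y^Lm_Y^R$. Alternatively, fix the size of the subset as the paper does.

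Second, for $A=\{v_1\}$, \ref{item:codegrees-inside-W} has no $j=0$ case. Your intermediate bound $(|W_{v_1}|q^{-r/2})^{r}$ is below $e(\calH_{v_1})$, so it is false. Use $e(\calH_{v_1})\le|W_{v_1}|\Delta_1(\calH_{v_1})$ instead, which gives exactly $(|Y|q^{-\ell/2})^{\ell-1}(\log q)^{O(1)}$.
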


\begin{proof}
    Let $C_1=C_1(\ell,s)$ be the constant given by Lemma~\ref{lem: odd-codegree-from-main-lemma}. For odd $\ell$, the lemma follows immediately from
    Lemma~\ref{lem: odd-codegree-from-main-lemma} applied with $W=T=Y$, $r=\ell$ and taking $C_2=C_1$.
    We therefore suppose that $\ell\ge4$ is even, and take $C_2=C_1+10$.

    \begin{claim}
    \label{many-good-v1}
    There exists a set $R\subseteq Y$ with $|R|\ge |Y|/(\log q)^9$ such that for every $v_1\in R$ there exists a set
    $Y_{v_1}\subseteq N^R(v_1)\cap Y$ satisfying
    \begin{enumerate}[label=\textnormal{(C\arabic*)}]
        \item \label{item:size-Yv1} $\displaystyle \frac{|Y|}{2q(\log q)^8} = |Y_{v_1}|$;
        \item \label{item:mult-Yv1} $\displaystyle m_{Y_{v_1}}^L\le \frac{4m_Y^L}{q}$ and
        $m_{Y_{v_1}}^R\le m_Y^R$.
    \end{enumerate}
    \end{claim}

    \begin{proof}
        For $v_1\in Y$, let $W_{v_1}=N^R(v_1)\cap Y$ and $F_{v_1}:=\left\{(x,y)\in W_{v_1}: m_{W_{v_1}}^L(x)>4m_Y^L/q\right\}$. By the first part of Lemma~\ref{lem:expected-number-of-tau-faulty-right-neighbourhood}, applied with
        $U=W=Y$ and $\tau=4m_Y^L/q$, we have
        \[
            \sum_{v_1\in Y}|F_{v_1}|\le 4q^{s-1}|Y|.
        \]
        Hence at most $|Y|/(\log q)^9$ vertices $v_1\in Y$ satisfy $|F_{v_1}|\ge 4q^{s-1}(\log q)^9$.
        On the other hand, Lemma~\ref{lem: large-right/left-degree} gives at least
        $|Y|/(\log q)^8$ vertices $v_1\in Y$ such that $|W_{v_1}|\ge \frac{|Y|}{q(\log q)^8}$.
        Thus, for sufficiently large $q$, there is a set $R\subseteq Y$ with
        $|R|\ge |Y|/(\log q)^9$ such that, for every $v_1\in R$,
        \[
            |W_{v_1}|\ge\frac{|Y|}{q(\log q)^8}
            \qquad\text{and}\qquad
            |F_{v_1}|\le 4q^{s-1}(\log q)^9.
        \]
        By the lower bound on $|Y|$, for sufficiently large $q$ we have $|F_{v_1}|\le \frac{|Y|}{2q(\log q)^8}$, and hence $|W_{v_1}\setminus F_{v_1}| \ge \frac{|Y|}{2q(\log q)^8}$.
        
        For each $v_1\in R$, choose
        $Y_{v_1}\subseteq W_{v_1}\setminus F_{v_1}$ with
        $|Y_{v_1}|=\frac{|Y|}{2q(\log q)^8}$. Then \ref{item:size-Yv1} holds, while the definition of $F_{v_1}$ gives
        $m_{Y_{v_1}}^L\le4m_Y^L/q$. Finally,
        $Y_{v_1}\subseteq Y$ gives
        $m_{Y_{v_1}}^R\le m_Y^R$, proving \ref{item:mult-Yv1}.
    \end{proof}

    Fix $v_1\in R$. We apply
    Lemma~\ref{lem: odd-codegree-from-main-lemma} with $r=\ell-1$,
    $W=Y_{v_1}$ and $T=Y$. By \ref{item:mult-Yv1} and the hypothesis on $Y$,
    \[
        m_{Y_{v_1}}^Lm_{Y_{v_1}}^R
        \le \frac{4m_Y^Lm_Y^R}{q}
        \le \frac{4|Y|(\log q)^7}{q^2}
        \le \frac{|Y_{v_1}|(\log q)^{20}}{q}.
    \]
    Moreover, we have $m_{Y_{v_1}}^Lm_Y^R, m_{Y_{v_1}}^Rm_Y^L \le m_Y^Lm_Y^R \le |Y|(\log q)^7/q$, where the last inequality holds by assumption. Finally, by \ref{item:size-Yv1} and the choice $C_2=C_1+10$,
    \[
        |Y_{v_1}|
        \ge \frac{|Y|}{2q(\log q)^8}
        \ge q^{s+\ell-4}(\log q)^{C_1}
    \]
    for sufficiently large $q$.

    Hence, Lemma~\ref{lem: odd-codegree-from-main-lemma} gives an
    $(\ell-1)$-uniform hypergraph $\calH_{v_1}$ on $Y_{v_1}$, every edge
    of which is an $(\ell-1)$-clique in $\Gamma[Y_{v_1}]$, satisfying
    \begin{enumerate}[label=\textnormal{(D\arabic*)}]
        \item \label{item:many-hyperedges-Yv1} $e(\calH_{v_1})\ge
        |Y_{v_1}|^{\ell-1}q^{-\binom{\ell-1}{2}}(\log q)^{-O(1)}$;
        \item \label{item:codegrees-in-Yv1} $\Delta_j(\calH_{v_1})\le
        (|Y_{v_1}|q^{-(\ell-1)/2})^{\ell-1-j}(\log q)^{O(1)}$
        for every $j\in[\ell-1]$;
        \item \label{item:neighbourhood-in-Y} for every edge
        $e=\{v_2<\cdots<v_\ell\}\in E(\calH_{v_1})$ and every
        $K\subseteq\{2,\ldots,\ell\}$ with $|K|\le\ell/2$,
        \[
        |N^L(\{v_i:i\in K\})\cap Y|
        \le
        \begin{cases}
        |Y|q^{-|K|+1/2}(\log q)^{O(1)},&
        \text{if }|K|=\ell/2\text{ and }\ell/2+1\in K,\\
        |Y|q^{-|K|}(\log q)^{O(1)},&
        \text{otherwise}.
        \end{cases}
        \]
    \end{enumerate}
    Here \ref{item:neighbourhood-in-Y} follows from \ref{item:neighbourhood-in-T} in
    Lemma~\ref{lem: odd-codegree-from-main-lemma} by taking $I=\emptyset$,
    after relabelling the positions of the $(\ell-1)$-clique as
    $2,\ldots,\ell$. We now define an $\ell$-uniform hypergraph $\calH$ with vertex set $Y$ by
    \[
        E(\calH):=
        \bigcup_{v_1\in R}
        \bigl\{\{v_1\}\cup e:e\in E(\calH_{v_1})\bigr\}.
    \]
    Since $Y_{v_1}\subseteq N^R(v_1)\cap Y$ and every edge of
    $\calH_{v_1}$ is an $(\ell-1)$-clique in $\Gamma[Y_{v_1}]$,
    every edge of $\calH$ is an $\ell$-clique in $\Gamma[Y]$. We will show that $\calH$ satisfies the conclusions of the lemma.

    \begin{claim}
    \label{claim:edgecount2}
        We have $e(\calH)\ge |Y|^\ell q^{-\binom{\ell}{2}}(\log q)^{-O(1)}$.
        
    \end{claim}

    \begin{proof}
        By \ref{item:many-hyperedges-Yv1} and the bounds on $|R|$ and $|Y_{v_1}|$,
        \[
            e(\calH)=\sum_{v_1\in R}e(\calH_{v_1})
            \ge \frac{|Y|}{(\log q)^9}
            \left(\frac{|Y|}{2q(\log q)^8}\right)^{\ell-1}
            q^{-\binom{\ell-1}{2}}(\log q)^{-O(1)}
            =
            |Y|^\ell q^{-\binom{\ell}{2}}(\log q)^{-O(1)}.
        \]
    \end{proof}

    \begin{claim}
    \label{claim:codegrees2}
    For every $j\in[\ell]$,
    \[
        \Delta_j(\calH)
        \le \left(|Y|q^{-\ell/2}\right)^{\ell-j}(\log q)^{O(1)}.
    \]
    \end{claim}
    
    \begin{proof}
        The claim clearly holds for $j=\ell$, so we fix $j\in[\ell-1]$ and let $J=\{u_1<\cdots<u_j\}\subseteq Y$.
        Fix a set of positions
        $Q=\{q_1<\cdots<q_j\}\subseteq[\ell]$. We bound the number of edges
        $e=\{v_1<\cdots<v_\ell\}\in E(\calH)$ satisfying
        $v_{q_t}=u_t$ for every $t\in[j]$. Since there are only
        $O_\ell(1)$ choices for $Q$, it suffices to prove the required bound
        for each fixed choice.
    
        Suppose first that $1\in Q$. Then $v_1$ is fixed. If $v_1\notin R$, there are no such edges, so we may assume that $v_1\in R$. If $j\ge2$,
        every such edge arises from an edge of $\calH_{v_1}$ containing the
        $j-1$ fixed vertices in the positions $Q\setminus\{1\}$. Hence,
        by \ref{item:codegrees-in-Yv1}, the number of such edges is at most
        \[
            \Delta_{j-1}(\calH_{v_1})
            \le
            \left(|Y_{v_1}|q^{-(\ell-1)/2}\right)^{\ell-j}
            (\log q)^{O(1)} \le \left(|Y|q^{-(\ell+1)/2}\right)^{\ell-j}
            (\log q)^{O(1)}
            \le
            \left(|Y|q^{-\ell/2}\right)^{\ell-j}(\log q)^{O(1)},
        \]
        where the second inequality follows from \ref{item:size-Yv1}. If $j=1$, the number of such edges is exactly $e(\calH_{v_1})$.
        Since
        \[
            (\ell-1)e(\calH_{v_1})
            =\sum_{v\in Y_{v_1}}d_{\calH_{v_1}}(v)
            \le |Y_{v_1}|\Delta_1(\calH_{v_1}),
        \]
        by \ref{item:codegrees-in-Yv1} together with \ref{item:size-Yv1}, we have
        \[
            e(\calH_{v_1})
            \le |Y_{v_1}|
            \left(|Y_{v_1}|q^{-(\ell-1)/2}\right)^{\ell-2}
            (\log q)^{O(1)}
            \le
            \left(|Y|q^{-\ell/2}\right)^{\ell-1}(\log q)^{O(1)}.
        \]
    
        Suppose now that $1\notin Q$. For $j\in[\ell-1]$, define
        \[
            a(j):=
            \begin{cases}
            (\ell-1)/2, & \text{if }j=\ell/2,\\
            \min\{j,\ell/2\}, & \text{otherwise}.
            \end{cases}
        \]
        Choose $Q'\subseteq Q$ with $|Q'|=\min\{j,\ell/2\}$, such that $\ell/2+1\notin Q'$ whenever $j>\ell/2$. If
        $e=\{v_1<\cdots<v_\ell\}\in E(\calH)$ satisfies $v_{q_t}=u_t$ for every $t \in [j]$, then
        \[
            v_1\in N^L(\{v_i:i\in Q'\})\cap Y.
        \]
        Hence, by \ref{item:neighbourhood-in-Y}, the number of possible choices for
        $v_1 \in R$ is at most $|Y|q^{-a(j)}(\log q)^{O(1)}$. For each fixed choice of $v_1 \in R$, the $j$ fixed vertices must all lie
        in an edge of $\calH_{v_1}$. Thus, by \ref{item:codegrees-in-Yv1}, the number
        of such edges is at most
        \[
            \left(|Y_{v_1}|q^{-(\ell-1)/2}\right)^{\ell-1-j}
            (\log q)^{O(1)}.
        \]
        Since $|Y_{v_1}|\le |Y|/(q(\log q)^8)$, multiplying the two bounds, the number of edges
        $e=\{v_1<\cdots<v_\ell\}\in E(\calH)$ satisfying
        $v_{q_t}=u_t$ for every $t\in[j]$ is at most
        \[
            |Y|q^{-a(j)}
            \left(|Y|q^{-(\ell+1)/2}\right)^{\ell-1-j}
            (\log q)^{O(1)}
            =
            |Y|^{\ell-j}
            q^{-a(j)-\frac{(\ell+1)(\ell-1-j)}2}
            (\log q)^{O(1)}.
        \]
        Thus, to prove the claim, it suffices to check that
        \[
            a(j)\ge\frac{j+1}{2}.
        \]
        If $j<\ell/2$, then $a(j)=j\ge(j+1)/2$. If $j>\ell/2$, then
        $a(j)=\ell/2\ge(j+1)/2$, since $j\le\ell-1$. Finally, if
        $j=\ell/2$, then
        $a(j)=(\ell-1)/2\ge(j+1)/2$ since $\ell\ge4$.
        This proves the claim.
    \end{proof}
    Combining Claims \ref{claim:edgecount2} and \ref{claim:codegrees2} completes the proof.
\end{proof}

\section{The number of $K_{\ell}$-free subsets}
\label{sec: hypergraph-containers}
In this section, we use the hypergraph container method to prove Lemma~\ref{lem:counting-kl-free}. We begin by stating the hypergraph container theorem.

\begin{theorem}[{\cite[Theorem~1.5]{morris-samotij-saxton}}] \label{thm:containers}
    Suppose that positive integers $b, \ell, \rho$ and a non-empty $\ell$-uniform hypergraph $\calH$ satisfy
    \[ \Delta_j(\calH) \le \left( \frac{b}{v(\calH)} \right)^{j-1} \frac{e(\calH)}{\rho}, \]
    for every $j \in [\ell]$. Then there exists a family $\cC$ of subsets of $V(\calH)$ such that every independent set in $\calH$ is contained in a member of $\cC$. Furthermore, for every $C \in \cC,$ we have $|C| \le v(\calH) - 2^{-\ell (\ell+1)} \rho,$ and
    \[ |\cC| \le \sum_{i=0}^{\ell b} \binom{v(\cH)}{i}. \]
\end{theorem}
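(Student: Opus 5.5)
The plan is to reproduce the standard algorithmic proof of the container lemma, as in Balogh--Morris--Samotij and Saxton--Thomason, in the form given by Morris, Samotij and Saxton. The central object is a deterministic procedure. Given an independent set $I$ of $\calH$, it outputs a \emph{fingerprint} $S\subseteq I$ with $|S|\le \ell b$, together with a set $C=C(S)$ satisfying $I\subseteq C$. The essential feature is that $C$ is a function of $S$ alone. The bound $|\cC|\le\sum_{i\le \ell b}\binom{v(\calH)}{i}$ is then immediate, since $S$ ranges over subsets of $V(\calH)$ of size at most $\ell b$. The whole difficulty therefore lies in designing the procedure so that $|C|\le v(\calH)-2^{-\ell(\ell+1)}\rho$.

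The procedure runs in $\ell-1$ rounds. We maintain hypergraphs $\calH_\ell=\calH,\calH_{\ell-1},\dots,\calH_1$, where $\calH_k$ is $k$-uniform and consists of sets $f$ such that $f\cup T\in E(\calH)$ for some $T\subseteq S$; in particular, $I$ is independent in every $\calH_k$. In round $k$ we repeatedly take the vertex $v$ of maximum degree in the current $\calH_k$ restricted to the vertices not yet examined, breaking ties by a fixed ordering of $V(\calH)$. If $v\in I$, we add $v$ to $S$. We then insert the link sets $e\setminus\{v\}$, for $e\ni v$ in $\calH_k$, into $\calH_{k-1}$, but only those whose insertion keeps every codegree of $\calH_{k-1}$ below a prescribed threshold. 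If $v\notin I$, we discard $v$. The thresholds are the hypothesis bounds $(b/v(\calH))^{j-1}e(\calH)/\rho$, rescaled for the lower uniformity. The round stops when $S$ has gained $b$ new elements, or when the maximum degree has dropped by a constant factor. Because every decision depends only on $S$ and on the fixed ordering, $C$ is determined by $S$.

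For the size bound there are two cases. If in some round the maximum degree of $\calH_k$ on the remaining vertices falls below about $e(\calH_k)/(2v(\calH))$, then an averaging argument shows that the discarded vertices carry a constant fraction of the edges. Each vertex has degree at most $\Delta_1\le e(\calH)/\rho$, so at least $2^{-O(\ell)}\rho$ vertices were discarded, and all of them lie outside $C$. Otherwise, $b$ high-degree vertices were placed in $S$. The codegree hypothesis $\Delta_j(\calH)\le (b/v(\calH))^{j-1}e(\calH)/\rho$ then guarantees that only a small proportion of their link sets is blocked by the thresholds. Hence $\calH_{k-1}$ receives at least a $2^{-O(\ell)}$ fraction of the expected number of edges, namely about $b\cdot e(\calH)/v(\calH)$ rescaled, and its codegrees satisfy the analogous hypothesis at uniformity $k-1$. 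This lets the argument recurse. At $k=1$ the edges of $\calH_1$ are singletons, which must be excluded from $C$, and their number is again $2^{-O(\ell)}\rho$. Tracking the constant losses over the $\ell$ levels gives the factor $2^{-\ell(\ell+1)}$.

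The main obstacle is the bookkeeping in the second case. We must show that the truncated link hypergraph $\calH_{k-1}$ inherits codegree bounds of exactly the balanced form $(b/v)^{j-1}e(\calH_{k-1})/\rho$, while losing only a constant factor in its edge count. My first attempt is to fix the thresholds so that a $j$-set is declared \emph{saturated} in $\calH_{k-1}$ once it lies in $2^{O(\ell)}(b/v)^{j-1}e(\calH)/\rho$ added edges. I then count the number of link edges lost to saturated sets, using the hypothesis bound on $\Delta_{j+1}(\calH)$, and compare this with the total of $b$ times the maximum degree.
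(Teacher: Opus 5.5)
The paper does not prove this statement; it quotes it from Morris--Samotij--Saxton, whose proof is a deterministic fingerprint algorithm of the kind you outline. Your version has a genuine gap, located exactly at the step you call the main obstacle, and the counting you propose there cannot close it.

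\textbf{Where the count fails.} You select $v$ of maximum degree in all of $\calH_k[A]$, where $A$ is the set of unexamined vertices. Only afterwards do you discard the links that would overload $\calH_{k-1}$, hoping these are a small fraction of the roughly $b\cdot\Delta$ links of the selected vertices. For $\calH_{k-1}$ to satisfy the hypothesis at the next level, the thresholds must be $\mathrm{thr}_j\approx 2^{O(\ell)}(b/v)^j e(\calH_k)/\rho$. The hypothesis then bounds the number of saturated $j$-sets by $\binom{k-1}{j}e(\calH_{k-1})/\mathrm{thr}_j$. Each such set lies in at most $\Delta_j(\calH_k)$ edges, or in at most $b\,\Delta_{j+1}(\calH_k)$ links of selected vertices. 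Either way, the resulting bound on blocked links exceeds $e(\calH_{k-1})$ by a factor of order $v/b$ (respectively $b$). It is small compared with $e(\calH_k)$, but never compared with the links you actually process.

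\textbf{The loss can be total.} Take $\calH$ to be a disjoint union of copies of $K_{d,d}$ on $v\ge d^2$ vertices, with $b=d$ and $\rho=d^2/2$. The hypotheses hold, and $\mathrm{thr}_1=O(1)$. Let $I$ be the union of all left sides, and let the tie-breaking order begin with the left side of the first copy. Your rule selects these $d$ vertices one after another: their degree in $\calH_2[A]$ stays $d$, so no degree drop is detected. All of them enter $S$. After $O(1)$ steps, however, the whole right side of that copy is saturated, and every later link is discarded. The round ends with $|S|=b$ and with $\calH_1$ supported on only $d$ vertices. The container therefore misses only $d<2^{-6}\rho$ vertices once $d>128$ (here $\ell=2$).

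\textbf{The missing idea.} Measure degrees only in the subhypergraph $\cF\subseteq\calH_k$ of edges that contain no currently saturated set.
\begin{itemize}
\item Select $v$ of maximum degree in $\cF[A]$, and insert \emph{all} of its links. Each vertex of $S$ then contributes its full degree, so your second case needs no loss estimate at all.
\item Stop once $e(\calH_{k-1})$ reaches its target $\approx 2^{-O(\ell)}(b/v)e(\calH_k)$, or once $\Delta_1(\cF[A])$ becomes small. The target rule bounds $|S|$ by $b$ and keeps the number of saturated sets under control.
\end{itemize}
The codegree hypothesis is then used in the degree-drop case instead. By the count above, edges containing a saturated set number at most $e(\calH_k)/4$. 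Since $\cF$ only shrinks, most edges of the final $\cF$ are not contained in $A$. Those whose first examined vertex lies in $S$ were all inserted into $\calH_{k-1}$, so there are at most $e(\calH_{k-1})\ll e(\calH_k)$ of them. The remaining edges force $2^{-O(\ell)}\rho$ discarded vertices.

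This also repairs a secondary problem in your degree-drop case. With your rule, the edges through $S$ are bounded only by $b\,\Delta_1(\calH_k)\le b\,e(\calH_k)/\rho$, which is not small unless $b\ll\rho$.
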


\begin{lemma}
    \label{lem: containers for large sets}
    There exists a constant $C_3=C_3(\ell,s)$ such that the following holds for all sufficiently large $q$. Let $X \subseteq V(\Gamma)$ satisfy $|X| \ge q^{s+\ell-3}(\log q)^{C_3}$. Then there exists a family $\cC$ of subsets of $X$ such that every $K_{\ell}$-free subset of $X$ is contained in some member of $\cC$. Moreover, for every $C \in \cC$, we have $|C| \le (1-(\log q)^{-C_3})|X|$ and $|\cC| \le |X|^{\ell q^{\ell/2}}$.
\end{lemma}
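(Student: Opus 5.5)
The plan is to apply the container theorem (Theorem~\ref{thm:containers}) once, to a hypergraph built on a regularised subset of $X$. First apply Corollary~\ref{cor:regularised-subset} to $X$; this is allowed since $|X|\ge q^{s-1}(\log q)^3$. It gives $Y\subseteq X$ with $|Y|\ge |X|/(\log q)^3$ and $m_Y^Lm_Y^R\le |Y|(\log q)^7/q$. Choosing $C_3\ge C_2+3$ gives $|Y|\ge q^{s+\ell-3}(\log q)^{C_2}$. So Lemma~\ref{lem:final-hypergraph} yields an $\ell$-uniform hypergraph $\calH$ on $Y$ with the following properties:
\begin{itemize}
\item every edge is an $\ell$-clique of $\Gamma[Y]$;
\item $e(\calH)\ge |Y|^\ell q^{-\binom{\ell}{2}}(\log q)^{-c}$;
\item $\Delta_j(\calH)\le (|Y|q^{-\ell/2})^{\ell-j}(\log q)^{c}$ for all $j\in[\ell]$,
\end{itemize}
for some $c=O_{\ell,s}(1)$. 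A $K_\ell$-free subset $A\subseteq X$ meets $Y$ in an independent set of $\calH$, because every edge of $\calH$ is a clique. So it suffices to cover the independent sets of $\calH$ by containers $C'\subseteq Y$ and then use the containers $C=C'\cup(X\setminus Y)$.

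Next, choose the container parameters. Set $b=q^{\ell/2}$ and $\rho=|Y|(\log q)^{-c'}$ for a suitable constant $c'$. Since $v(\calH)=|Y|$, the hypothesis of Theorem~\ref{thm:containers} reads
\[
\Delta_j(\calH)\le \left(\frac{q^{\ell/2}}{|Y|}\right)^{j-1}\frac{e(\calH)}{|Y|}(\log q)^{c'} .
\]
The right-hand side is at least $q^{\ell(j-1)/2}|Y|^{\ell-j}q^{-\binom{\ell}{2}}(\log q)^{c'-c}$, while the left-hand side is at most $|Y|^{\ell-j}q^{-\ell(\ell-j)/2}(\log q)^{c}$. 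The powers of $q$ match exactly: $\ell(j-1)/2-\binom{\ell}{2}=-\ell(\ell-j)/2$. Hence the hypothesis holds once $c'\ge 2c$. This exact cancellation is where the codegree bound of Lemma~\ref{lem:final-hypergraph} is calibrated to $m=q^{\ell/2}$, and it is the only point needing care. For $j=1$ the codegree bound is $\Delta_1\le |Y|^{\ell-1}q^{-\ell(\ell-1)/2}(\log q)^c$, which is consistent with the requirement $\Delta_1\le \ell e(\calH)/|Y|\cdot(\log q)^{c'}$.

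Theorem~\ref{thm:containers} then gives a family $\cC'$ of containers for the independent sets of $\calH$. Each has size $|C'|\le |Y|-2^{-\ell(\ell+1)}|Y|(\log q)^{-c'}$, and
\[
|\cC'|\le\sum_{i\le \ell q^{\ell/2}}\binom{|Y|}{i}\le |X|^{\ell q^{\ell/2}}
\]
for large $q$; if needed, absorb constants by using $b=q^{\ell/2}/2$. For the final containers,
\[
|C|\le |X|-2^{-\ell(\ell+1)}|Y|(\log q)^{-c'}\le \bigl(1-2^{-\ell(\ell+1)}(\log q)^{-c'-3}\bigr)|X|,
\]
which is at most $(1-(\log q)^{-C_3})|X|$ once $C_3\ge c'+4$. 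The number of containers is unchanged. No real obstacle remains; the only steps are choosing $C_3$ large relative to $C_2$, $c$ and $c'$ and doing the exponent bookkeeping above.
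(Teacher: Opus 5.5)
Your proposal is correct and follows the paper's proof essentially step for step. You regularise $X$ to $Y$ via Corollary~\ref{cor:regularised-subset}, take the hypergraph from Lemma~\ref{lem:final-hypergraph}, apply Theorem~\ref{thm:containers} with $b=q^{\ell/2}$ and $\rho=|Y|(\log q)^{-O(1)}$ (the paper uses exponent $C_3/2$), and pad each container with $X\setminus Y$. The remaining differences are cosmetic: you keep the factor $2^{-\ell(\ell+1)}$ explicitly where the paper absorbs it into a $1/\log q$ loss, and your side remark on $j=1$ carries a harmless spurious factor $\ell$, since your general exponent check already covers $j=1$.
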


\begin{proof}
    Let $C_3=C_3(\ell,s)$ be a sufficiently large constant to be chosen implicitly. Fix a subset $X\subseteq V(\Gamma)$ satisfying
    $|X|\ge q^{s+\ell-3}(\log q)^{C_3}$. By Corollary~\ref{cor:regularised-subset}, there exists a subset $Y\subseteq X$ with $|Y|\ge |X|/(\log q)^3$ such that $m_Y^Lm_Y^R\le |Y|(\log q)^7/q$. For $C_3$ sufficiently large, $|Y| \ge q^{s+\ell-3}(\log q)^{C_2}$, where $C_2=C_2(\ell,s)$ is given by Lemma~\ref{lem:final-hypergraph}. 

    Applying Lemma~\ref{lem:final-hypergraph} to $Y$, we obtain an $\ell$-uniform hypergraph $\calH$ on $Y$ such that every edge of $\calH$ forms an $\ell$-clique in $\Gamma[Y]$. Furthermore,
    $\Delta_j(\calH)\le (|Y|q^{-\ell/2})^{\ell-j}(\log q)^{O(1)}$ for every $j\in[\ell]$, and
    $e(\calH)\ge |Y|^\ell q^{-\binom{\ell}{2}}(\log q)^{-O(1)}$. Hence, for every $j\in[\ell]$,
    \[
    \frac{\Delta_j(\calH)}{e(\calH)}
    \le |Y|^{-j}q^{\frac{\ell}{2}(j-1)}(\log q)^{O(1)}
    \le \left(\frac{q^{\ell/2}}{|Y|}\right)^{j-1}
    \frac{(\log q)^{C_3/2}}{|Y|},
    \]

    for $C_3$ sufficiently large. Therefore, applying Theorem~\ref{thm:containers} with
    $b=q^{\ell/2}$ and $\rho=|Y|(\log q)^{-C_3/2}$, we obtain a family $\cC'$ of subsets of $Y$. Since every edge of $\calH$ forms a copy of $K_\ell$ in $\Gamma[Y]$, every $K_\ell$-free subset of $Y$ is an independent set in $\calH$, and hence is contained in some member of $\cC'$. Moreover, every $C'\in\cC'$ satisfies $|C'|\le |Y| - \rho / \log q =  (1-(\log q)^{-C_3/2-1})|Y|$. Finally, using $\sum_{i=0}^k\binom{n}{i}\le n^k$ for $n\ge k\ge2$, we have $|\cC'| \le |Y|^{\ell q^{\ell/2}}
    \le |X|^{\ell q^{\ell/2}}$.
    
    Now define $\cC:=\{(X\setminus Y)\cup C':C'\in\cC'\}$. Then every $K_\ell$-free subset of $X$ is contained in some member of $\cC$. Furthermore, for every $C=(X\setminus Y)\cup C'\in\cC$,
    \[
    |C|
    \le |X|-|Y|(\log q)^{-C_3/2-1}
    \le (1-(\log q)^{-C_3/2-4})|X|\le (1-(\log q)^{-C_3})|X| ,
    \]
    where we used $|Y|\ge |X|/(\log q)^3$, together with the fact that $C_3$ is sufficiently large. Also,
    $|\cC|=|\cC'|\le |X|^{\ell q^{\ell/2}}$.
    This proves the lemma.
\end{proof}

We now iterate Lemma~\ref{lem: containers for large sets} to obtain a family of containers covering all $K_\ell$-free subsets of $V(\Gamma)$.

\begin{lemma}
\label{lem:iterated-containers}
There exists a family $\cF$ of subsets of $V(\Gamma)$ such that every
$K_\ell$-free subset of $V(\Gamma)$ is contained in some member of $\cF$,
every $C\in\cF$ satisfies $|C|\le q^{s+\ell-3}(\log q)^{O(1)}$ and $|\cF|\le |V(\Gamma)|^{q^{\ell/2}(\log q)^{O(1)}}$.
\end{lemma}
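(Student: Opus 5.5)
We build the family $\cF$ by iterating Lemma~\ref{lem: containers for large sets}, organised as a rooted tree of containers. Start with the root $V(\Gamma)$. Process each node $X$ as follows.
\begin{itemize}
\item If $|X| < q^{s+\ell-3}(\log q)^{C_3}$, then $X$ is a leaf and goes into $\cF$.
\item Otherwise, apply Lemma~\ref{lem: containers for large sets} to $X$. This gives a family $\cC_X$ of subsets of $X$ covering all $K_\ell$-free subsets of $X$. We take the members of $\cC_X$ as the children of $X$.
\end{itemize}
By induction along the tree, every $K_\ell$-free subset of $V(\Gamma)$ lies in some leaf. Indeed, if a $K_\ell$-free set $I$ lies in a node $X$, then $I$ is a $K_\ell$-free subset of $X$, so it lies in some child of $X$.

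Every leaf has size less than $q^{s+\ell-3}(\log q)^{C_3}$. This is the required bound $|C| \le q^{s+\ell-3}(\log q)^{O(1)}$.

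It remains to bound the number of leaves. Each child has size at most $(1-(\log q)^{-C_3})$ times the size of its parent, and $|V(\Gamma)| = q^{O(1)}$. Hence along any root-to-leaf path the number of non-leaf nodes is at most
\[
h \le \frac{\log |V(\Gamma)|}{-\log\bigl(1-(\log q)^{-C_3}\bigr)} \le (\log q)^{C_3} \log |V(\Gamma)| = (\log q)^{O(1)}.
\]
Every non-leaf node $X \subseteq V(\Gamma)$ has at most $|X|^{\ell q^{\ell/2}} \le |V(\Gamma)|^{\ell q^{\ell/2}}$ children. The number of leaves is therefore at most
\[
\bigl(|V(\Gamma)|^{\ell q^{\ell/2}}\bigr)^{h} = |V(\Gamma)|^{\ell q^{\ell/2}(\log q)^{O(1)}} = |V(\Gamma)|^{q^{\ell/2}(\log q)^{O(1)}},
\]
as required.

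There is no real obstacle; the argument is routine bookkeeping. The only point needing care is that the depth bound uses the multiplicative shrinkage factor $1-(\log q)^{-C_3}$. Because this factor is only polylogarithmically away from $1$, the depth is polylogarithmic rather than constant. That depth costs exactly the $(\log q)^{O(1)}$ factor in the exponent, and it stays polylogarithmic because $\log|V(\Gamma)| = O(\log q)$.
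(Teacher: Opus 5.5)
Your proposal is correct and follows essentially the same argument as the paper: iterate Lemma~\ref{lem: containers for large sets} until every container has size below $q^{s+\ell-3}(\log q)^{C_3}$, bound the number of iterations by $(\log q)^{O(1)}$ using the shrinkage factor $1-(\log q)^{-C_3}$ together with $|V(\Gamma)|=q^{O(1)}$, and multiply the branching bounds $|V(\Gamma)|^{\ell q^{\ell/2}}$. The paper organises this as rounds $\cF_i$, run for $T=(\log q)^{C_4}$ steps with small sets carried forward unchanged, rather than as a tree; this is the same bookkeeping.
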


\begin{proof}
Let $C_3=C_3(\ell,s)$ be the constant given by Lemma \ref{lem: containers for large sets}, and set $w:=q^{s+\ell-3}(\log q)^{C_3}$.
Let $\cF_0=\{V(\Gamma)\}$. For $i\ge0$, construct $\cF_{i+1}$ as follows. For each $X\in\cF_i$, if $|X|<w$, include $X$ in $\cF_{i+1}$. Otherwise, let $\cC_X$ be the family given by Lemma~\ref{lem: containers for large sets} applied to $X$, and include all members of $\cC_X$ in $\cF_{i+1}$.

Let $\eta=(\log q)^{-C_3}$. Then every member of $\cC_X$ has size at most $(1-\eta)|X|$. By induction on $i$, every $K_\ell$-free subset of $V(\Gamma)$ is contained in some member of $\cF_i$, and every $C\in\cF_i$ satisfies $|C|\le \max\left\{w,(1-\eta)^i|V(\Gamma)|\right\}$.
Since $|V(\Gamma)|=q^{O(1)}$, there exists a constant $C_4=C_4(\ell,s)$ such that $(1-\eta)^T|V(\Gamma)|<w$, where $T=(\log q)^{C_4}$. Hence every member of $\cF_T$ has size at most $w$.

Moreover, by Lemma~\ref{lem: containers for large sets}, every set in $\cF_i$ gives rise to at most $|V(\Gamma)|^{\ell q^{\ell/2}}$
members of $\cF_{i+1}$. Therefore, by induction, $|\cF_i|\le |V(\Gamma)|^{\ell q^{\ell/2}i}$ and in particular
\[
|\cF_T|
\le |V(\Gamma)|^{\ell q^{\ell/2}T}=|V(\Gamma)|^{\ell q^{\ell/2}(\log q)^{C_4}}
\le |V(\Gamma)|^{q^{\ell/2}(\log q)^{O(1)}}.
\]

Taking $\cF=\cF_T$ proves the lemma.
\end{proof}

Lemma \ref{lem:counting-kl-free} now follows easily.

\begin{proof}[Proof of Lemma \ref{lem:counting-kl-free}]
By Lemma~\ref{lem:iterated-containers}, there exists $C_5=C_5(\ell,s)$ and a family $\cF$ such that every $K_\ell$-free subset of $V(\Gamma)$ is contained in some $C\in\cF$, every $C\in\cF$ satisfies $|C|\le q^{s+\ell-3}(\log q)^{C_5}$, and $|\cF|\le |V(\Gamma)|^{q^{\ell/2}(\log q)^{C_5}}$. Choose $\beta$ sufficiently large compared to $C_5$ and set $m=q^{\ell/2}(\log q)^\beta$. Since $|V(\Gamma)|=q^{O(1)}$, for sufficiently large $q$ we have $|\cF|\le 2^m$. Moreover, for every $C\in\cF$,
    \[
        \binom{|C|}{m}
        \le \left(\frac{e|C|}{m}\right)^m
        \le \left(e q^{s+\ell/2-3}(\log q)^{C_5-\beta}\right)^m
        \le \left(\frac{q^{s+\ell/2-3}}2\right)^m.
    \]
    Hence the number of $K_\ell$-free subsets of size $m$ is at most
    \[
        |\cF|\max_{C\in\cF}\binom{|C|}{m}
        \le 2^m\left(\frac{q^{s+\ell/2-3}}2\right)^m
        =\left(q^{s+\ell/2-3}\right)^m,
    \]
    as required.
\end{proof}
    
\medskip

\paragraph{Statement on AI use.} The ideas in this paper were found without AI assistance. We used ChatGPT Pro to help us with the writing.

\end{document}